\numberwithin{equation}{section}
\newtheorem{theorem}{Theorem}
\newtheorem{OldTheorem}{Theorem}
\newtheorem{lemma}{Lemma}
\newtheorem{definition}{Definition}
\newtheorem*{remark}{Remark}
\newtheorem*{corollary}{Corollary}
\def\priority{{\rm \,priority\,}}
\def\mod{{\rm \,mod\,}}
\def\sp{{\rm sp\,}}
\def\tl{{\rm tl\,}}
\def\diam{{\rm diam\,}}
\def\bs{{\rm bs\,}}
\def\ZZ{\ensuremath{\mathbb Z}}
\def\ZI{\ensuremath{\mathbb I}}
\def\zI{\ensuremath{\mathcal I}}
\def\ZN{\ensuremath{\mathbb N}}
\def\ZT{\ensuremath{\mathbb T}}
\def\ZS{\ensuremath{\mathcal S}}
\def\zR{\ensuremath{\mathcal R}}
\def\ZM{\ensuremath{\mathcal M}}
\def\ZM{\ensuremath{\mathcal M}}
\def\ZR{\ensuremath{\mathbb R}}
\def\ZF{\ensuremath{\mathcal F}}
\def\ZA{\ensuremath{\mathcal A}}
\def\ZB{\ensuremath{\mathcal B}}
\def\md#1#2\emd{\ifx0#1
\begin{equation*} #2 \end{equation*}\fi  
\ifx1#1\begin{equation}#2\end{equation}\fi   
\ifx2#1\begin{align*}#2\end{align*}\fi   
\ifx3#1\begin{align}#2\end{align}\fi    
\ifx4#1\begin{gather*}#2\end{gather*}\fi  
\ifx5#1\begin{gather}#2\end{gather}\fi   
\ifx6#1\begin{multline*}#2\end{multline*}\fi  
\ifx7#1\begin{multline}#2\end{multline}\fi  
}
\newcommand {\e }[1]{(\ref{#1})}
\newcommand {\lem }[1]{Lemma \ref{#1}}
\newcommand {\trm }[1]{Theorem \ref{#1}}
\newcommand {\otrm }[1]{Theorem \ref{#1}}
\newcommand {\sect }[1]{Section \ref{#1}}
\begin{document}
\title{On Riemann sums and maximal functions in $\ZR^n$}

\author
{G. A. Karagulyan}

\address{Institute of Mathematics of Armenian
National Academy of Sciences, Bagramian Ave.- 24b, 375019,
Yerevan, Armenia}

\address{Yerevan State University, Depart. of Informatics and Applied Mathematics}

\email{g.karagulyan@yahoo.com}

\subjclass{42B25; 26A42; 40A30}

\keywords{Riemann sums, maximal functions, covering lemmas,
sweeping out property}

\thanks{This research work was kindly
supported by College of Science-Research Center Project No.
Math/2008/07, Mathematics Department, College of Science, King
Saud University.}
\begin{abstract}
In this paper we investigate problems on almost everywhere
convergence of subsequences of Riemann sums
\begin{equation*}
R_nf(x)=\frac{1}{n}\sum_{k=0}^{n-1}f\bigg(x+\frac{k}{n}\bigg),\quad
x\in \ZT.
\end{equation*}
We establish a relevant connection between Riemann and ordinary
maximal functions, which allows to use techniques and results of
the theory of differentiations of integrals in $\ZR^n$ in
mentioned problems. In particular, we prove that for a definite
sequence of infinite dimension $n_k$ Riemann sums $R_{n_k}f(x)$
converge almost everywhere for any $f\in L^p$ with $p>1$.
\end{abstract}
\maketitle
\begin{section}{Introduction}

We consider the Riemann sums operators
\begin{equation*}
R_nf(x)=\frac{1}{n}\sum_{k=0}^{n-1}f\bigg(x+\frac{k}{n}\bigg),\quad
x\in \ZT,
\end{equation*}
for the functions defined on the torus $\ZT=[0,1]=\ZR/\ZZ $. It is
not hard to observe that if $f$ is continuous then these sums
converge to the integral of $f$ uniformly and they converge in
$L^1(\ZT)$ while $f$ is Lebesgue integrable. In this paper we
investigate certain problems concerning the almost everywhere
convergence of subsequences of Riemann operators. B.~Jessen's
classical theorem in \cite{Jess} is the first result in this
concern.
\begin{OldTheorem}[Jessen]
Let $\{n_k\}$ be an increasing sequence of positive integers such
that $n_k$ divides $n_{k+1}$. Then
\begin{equation}\label{Jesae}
\lim_{k\to\infty }R_{n_k}f(x)=\int_0^1f(t)dt,\hbox { a.e. }
\end{equation}
for any function $f\in L^1(\ZT)$. Moreover
\begin{equation}\label{Jes}
|\{x\in \ZT:\,\sup_kR_{n_k}|f(x)|>\lambda \}|\le\frac{1}{\lambda
}\|f\|_{L^1},\quad \lambda >0.
\end{equation}
\end{OldTheorem}
The next fundamental result in this direction due W. Rudin
\cite{Rud}. He has constructed an example of a bounded function
with divergent Riemann sums. Moreover it was proved
\begin{OldTheorem}[W. Rudin]\label{TRud}
Let $D$ be a sequence of positive integers which contains the sets
$D_n$ $(n=1,2,\ldots )$, each consisting of $n$ terms, such that
no member of $D_n$ divides the least common multiple of the other
members of $D_n$. Then for every $\varepsilon >0$ there exists a
bounded measurable function $f$, such that $0\le f\le 1$, and such
that
\begin{equation*}
\limsup_{n\to\infty,\, n\in D}R_nf(x)\ge \frac{1}{2}
\end{equation*}
for all $x$, although $\int f<\varepsilon $.
\end{OldTheorem}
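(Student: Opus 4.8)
The plan is to split the proof into a quantitative finite ``sweeping-out'' lemma and a soft diagonalisation, the hard work lying entirely in the former.

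\emph{Finite lemma and assembly.} I claim it suffices to prove: for every $\eta>0$ there are an $N$ and a finite union of intervals $E\subset\ZT$ with $|E|<\eta$, realised by the elements of $D_N\subset D$, such that $\max_{n\in D_N}R_n\mathbf 1_E(x)\ge\tfrac12$ for every $x\in\ZT$. Granting this, fix indices $N_1<N_2<\dots$, apply the lemma with $\eta=2^{-j}\varepsilon$ to obtain $E_j$ built from $D_{N_j}\subset D$, at stage $j$ always selecting elements of $D$ larger than all previously used ones (possible since $D_{N_j}$ has $N_j$ distinct terms), and set $f=\mathbf 1_E$ with $E=\bigcup_jE_j$. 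Then $0\le f\le1$ and $\int f\le\sum_j2^{-j}\varepsilon<\varepsilon$, while for each $x$ and each $j$ some $n\in D_{N_j}\subset D$ satisfies $R_nf(x)\ge R_n\mathbf 1_{E_j}(x)\ge\tfrac12$; since the admissible $n$ tend to infinity with $j$, this gives $\limsup_{n\in D}R_nf(x)\ge\tfrac12$ for all $x$. (If the lemma is only arranged to hold on a set of measure close to $1$ rather than everywhere, the standard translation trick — $R_n$ commutes with translations, so a good set can be spread over $\ZT$ by finitely many translates of $E$ — upgrades it, since the good set is a finite union of intervals.)

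\emph{Reduction of the lemma to combinatorics.} The hypothesis on $D_N=\{p_1,\dots,p_N\}$ is equivalent to the existence of \emph{distinct} primes $q_1,\dots,q_N$ and exponents $a_i\ge1$ with $q_i^{a_i}\mid p_i$ but $q_i^{a_i}\nmid p_j$ for $j\ne i$ — a prime shared by two indices would force contradictory exponent inequalities. Put $L=\mathrm{lcm}(p_1,\dots,p_N)$ and partition $\ZT$ into the $L$ intervals $[m/L,(m+1)/L)$. For $E$ a union of such intervals, $R_{p_i}\mathbf 1_E$ is constant on each of them, and on the one indexed by $m$ it equals $p_i^{-1}$ times the number of elements of $\mathcal E:=\{m:[m/L,(m+1)/L)\subset E\}$ lying in the arithmetic progression $\{m+j(L/p_i):0\le j<p_i\}$ of $\ZZ/L\ZZ$. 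Thus the lemma is equivalent to a combinatorial statement: find $\mathcal E\subset\ZZ/L\ZZ$ with $|\mathcal E|<\eta L$ such that every residue lies on at least one such progression (for some $i$) that $\mathcal E$ fills to density $\ge\tfrac12$. The private prime powers $q_i^{a_i}$ make the $N$ progression families mutually ``transverse'', and this transversality is exactly what permits the density of $\mathcal E$ to tend to $0$.

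\emph{Construction and main obstacle.} I would build $\mathcal E$ recursively in $N$ stages: at stage $i$ one adds to what is already present roughly half of each member of a carefully chosen small family of $p_i$-progressions, placed inside the $q_i$-adic coordinate so that each residue added improves the fill ratio of progressions in \emph{all} directions $j\le i$ simultaneously. This reuse of points is essential, and is the whole point: a naive sweep that handled one direction at a time would cost $|\mathcal E|\asymp L$, because a single direction $i$ satisfies $\int_{\ZT}R_{p_i}\mathbf 1_E\,dx=|E|$ and so can cover only measure $\le 2|E|$ on its own, whereas simultaneous reuse brings the total down to $\asymp L/N$, matching the trivial lower bound $|\mathcal E|\ge L/(2N)$ coming from $\sum_i\int_{\ZT}R_{p_i}\mathbf 1_E\,dx=N|E|$. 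The difficulty is precisely the bookkeeping for these $N$ stages: one must verify that together they cover all of $\ZZ/L\ZZ$ (or all but a negligible union of intervals), that the $q_i$-adic transversality keeps successive stages from undoing one another, and that the running total stays $O(L/N)$ so that $|E|\to0$ as $N\to\infty$. Making this recursion close is where the real content sits; it is the heart of W.~Rudin's argument, and everything else above is routine.
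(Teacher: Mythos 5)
Your reductions (finite lemma, assembly over stages $j$ with elements of $D$ forced to grow, passage to a combinatorial statement about a set $\mathcal E\subset\ZZ/L\ZZ$ and the progressions $\{m+jL/p_i\}$) are correct as far as they go, and the observation that the hypothesis yields prime powers $q_i^{a_i}$ with pairwise distinct $q_i$, each dividing exactly one member of $D_N$, is right. But the argument stops exactly where the theorem begins: the finite lemma is never proved. Your last paragraph only describes what a construction of $\mathcal E$ would have to achieve and then defers it (``Making this recursion close is where the real content sits; it is the heart of W.~Rudin's argument''). Since, as you say yourself, everything else is routine, the missing recursion is the entire mathematical content of the statement, so this is a genuine gap, not a proof.

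It is also worth noting that the step you left open is harder than what Rudin, or the present paper, actually establish at the finite stage, and you would do better to weaken your finite lemma. Neither argument produces in one stroke a set of small absolute measure on which the maximum of $R_n\ZI_E$, $n\in D_k$, exceeds $1/2$ everywhere; they prove only a relative-measure (weak sweeping out) estimate. In the paper, Theorem \ref{T4} transfers $\zR_{D_k}^l$ to the maximal function $\ZM_{l/D_k}$ over rectangles in $\ZT^\infty$; with one ``private'' prime coordinate per member of $D_k$ one takes $C=\bigcap_{j=1}^k C_j$, where $C_j$ restricts the $k-1$ coordinates with index $\neq j$ to an initial interval of length about $\delta$, and independence of distinct coordinates gives $\ZM_{l/D_k}\ZI_C>\delta$ on $\bigcup_j C_j$, a set of measure $>(k+1)|C|$ -- a two-line computation replacing your $N$-stage bookkeeping. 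The smallness of the final exceptional set is then obtained afterwards, by translation invariance plus Borel--Cantelli and the weak-to-strong sweeping out upgrades (Theorems \ref{T7} and \ref{T6}), not by a single cleverly reused set of density $O(1/N)$. So to complete your plan you must either genuinely carry out the recursion in $\ZZ/L\ZZ$ (the hard part you skipped), or replace your finite lemma by the ratio statement -- which the transversality of the $q_i^{a_i}$ makes easy -- and let the translation/covering step, which you only mention parenthetically, do the rest.
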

For example, $D$ could be any sequence of primes. Using the
Dirichlet's theorem on primes in arithmetic progressions W. Rudin
in \cite{Rud} has constructed a sequence $\{n_k\}$ which satisfies
the hypothesis of Jessen's theorem such that $\{1+n_k\}$ is a
sequence of primes. Thus $R_{n_k}f(x)$ converges a.e., although
$R_{1+n_k}f(x)$ need not do so. This observation shows that in
a.e. convergence of operators $R_{n_k}f(x)$ arithmetic properties
of $\{n_k\}$ are crucial.

Following L. Dubins and J. Pitman \cite{DubPit}, we define a chain
to be an increasing sequence of natural numbers $\{n_k\}$ for
which $n_k$ divides $n_{k+1}$. For families of natural numbers
$\ZS_1,\ZS_2,\dots, \ZS_d $ we denote by $[\ZS_1, \ZS_2,\dots
,\ZS_d]$ the set of all naturals which are least common multiple
of some numbers $n_1\in \ZS_1, n_2\in\ZS_2, \dots , n_d\in\ZS_d$.
We will say a set $\ZS$ has dimension $d$, if $d$ is the least
possible integer such that $\ZS $ is the subset of $[\ZS_1,
\ZS_2,\dots ,\ZS_d]$ for some chains $\ZS_1,\ZS_2,\dots, \ZS_d $.
An example of a set of dimension $d$ is the set of integers having
the factorization
\begin{equation}\label{p1d}
p_1^{k_1}p_2^{k_2}\ldots p_d^{k_d},\quad k_1,k_2,\ldots ,k_d\in
\ZN,
\end{equation}
for fixed different primes $p_1,p_2,\ldots, p_d$. L. Dubins and J.
Pitman in \cite{DubPit} extended the Jessen's theorem proving
\begin{OldTheorem}\label{TDub}
If the set of positive integers has dimension $d$ and $f\in
L\log^{d-1} L(\ZT)$ then
\begin{equation*}
\lim_{n\to\infty , n\in\ZS }R_n(f)(x)=\int_0^1f(x)dx \hbox { a.e.
}
\end{equation*}
and moreover
\begin{equation}\label{DuPi}
m\{x\in \ZT:\sup_{n\in \ZS}|R_n(f)(x)|>\lambda \}\le
\frac{C_d}{\lambda}\int_0^1|f|\log^{d-1}(1+|f|).
\end{equation}
\end{OldTheorem}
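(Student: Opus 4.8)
The plan is to exploit a simple but decisive algebraic property of the operators $R_n$. For any $m,n\in\ZN$ I claim the composition identity
\begin{equation}\label{Rcomp}
R_m\circ R_n=R_n\circ R_m=R_{\operatorname{lcm}(m,n)}.
\end{equation}
Indeed $R_mR_nf(x)=\frac{1}{mn}\sum_{j=0}^{m-1}\sum_{k=0}^{n-1}f\big(x+\tfrac{j}{m}+\tfrac{k}{n}\big)$, and the map $(j,k)\mapsto \frac{j}{m}+\frac{k}{n}$ from $\ZZ/m\times\ZZ/n$ onto the cyclic subgroup $\tfrac1L\ZZ/\ZZ\subset\ZT$, $L=\operatorname{lcm}(m,n)$, is a surjective group homomorphism all of whose fibers have cardinality $mn/L=\gcd(m,n)$; collecting equal terms turns the double sum into $R_Lf(x)$. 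In particular the $R_n$ commute, each $R_n$ is idempotent, and if $\ZS\subset[\ZS_1,\dots,\ZS_d]$ with $\ZS_1,\dots,\ZS_d$ chains, then every $n\in\ZS$ may be written $n=\operatorname{lcm}(n_1,\dots,n_d)$ with $n_i\in\ZS_i$, so that $R_nf=R_{n_1}R_{n_2}\cdots R_{n_d}f$.

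Next I would introduce, for $i=1,\dots,d$, the one-dimensional maximal operator along the chain $\ZS_i$,
\[
M_ig(x)=\sup_{m\in\ZS_i}R_m|g|(x).
\]
Since each $R_m$ is a positive averaging operator, $|R_{n_i}h|\le R_{n_i}|h|\le M_i|h|$ and $M_i$ is monotone; iterating this from the inside of the factorization $R_nf=R_{n_1}\cdots R_{n_d}f$ and then taking the supremum over $n\in\ZS$ (equivalently over all $n_i\in\ZS_i$) yields the pointwise domination
\begin{equation}\label{dom}
\sup_{n\in\ZS}|R_nf(x)|\le M_1M_2\cdots M_d|f|(x),\qquad x\in\ZT.
\end{equation}
This reduces the maximal inequality \e{DuPi} to the assertion that $M_1M_2\cdots M_d$ maps $L\log^{d-1}L(\ZT)$ into weak $L^1(\ZT)$. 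Granting \e{DuPi}, the a.e.\ convergence follows by the standard Banach-principle argument: Riemann sums of a continuous function converge uniformly to its integral (as recalled in the introduction), continuous functions are dense in $L\log^{d-1}L(\ZT)$, and applying \e{DuPi} to $f-g$ with $g$ continuous and close to $f$ makes the exceptional set $\{x:\limsup_{n\in\ZS}|R_nf(x)-\int_0^1f|>\lambda\}$ arbitrarily small.

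It remains to prove the weak-type bound for $M_1\cdots M_d$. By Jessen's theorem \e{Jes}, each $M_i$ is of weak type $(1,1)$ with constant $1$; trivially $\|M_ig\|_\infty\le\|g\|_\infty$; hence by Marcinkiewicz interpolation each $M_i$ is bounded on $L^p(\ZT)$ for every $p>1$. I would then invoke the classical composition lemma of Jessen, Marcinkiewicz and Zygmund: if $T_1,\dots,T_d$ are positive, sublinear, of weak type $(1,1)$ and of strong type $(p,p)$ for some $p>1$, then $T_1\cdots T_d$ is of weak type $(L\log^{d-1}L,L^1)$. This is proved by induction on $d$; with $g=T_2\cdots T_df$ (of weak type $(L\log^{d-2}L,L^1)$ by the inductive hypothesis and of strong type $(p,p)$), one estimates $|\{T_1g>\lambda\}|$ by decomposing $f$ at the dyadic levels $\{2^j\le|f|<2^{j+1}\}$, treating the high part through the weak $(1,1)$ bounds and the low part through the $L^p$ bounds; the geometric summation of these contributions produces exactly one extra power of the logarithm at each step. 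This gives \e{DuPi} with $C_d$ depending only on $d$, and hence the theorem.

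The one genuinely delicate step is this last estimate. The composition bound is \emph{not} a formal consequence of the weak $(1,1)$ bounds of the factors alone — an arbitrary iteration of $d$ weak $(1,1)$ operators is in general far worse than $L\log^{d-1}L\to$ weak $L^1$ — so one must really use the $L^p$-smoothing of each factor to absorb the tails, and must arrange the level-set decomposition so that the logarithmic loss is precisely one power per composition. (Equivalently, one may run the induction through the identity $\int|f|\log^{d-1}(1+|f|)\asymp\sum_{j\ge1}j^{d-2}|\{|f|>2^j\}|$ together with Jessen's inequality.) Everything else is routine: the identity \e{Rcomp} is elementary, the domination \e{dom} is immediate from positivity, and the passage from \e{DuPi} to pointwise convergence is the usual maximal-function argument.
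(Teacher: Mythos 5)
Your proposal is correct and follows essentially the same route the paper itself endorses for this theorem: dominating $\sup_{n\in\ZS}|R_nf|$ by $d$ iterations of the Jessen chain maximal operator (via the identity $R_mR_n=R_{\operatorname{lcm}(m,n)}$) and then applying the Zygmund-type interpolation/composition lemma to pass from weak $(1,1)$ plus $L^p$-boundedness of each factor to the $L\log^{d-1}L\to L^{1,\infty}$ bound. This is precisely the ``elementary and short proof'' attributed in the introduction to the referee of Bugeaud--Weber, citing Zygmund, chap.\ 12, Theorem 4.34, so no further comparison is needed.
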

In the original proof of this theorem the martingale theory was
used. There is a rather elementary and short proof of \e{DuPi}
given by an unknown referee of the article Y. Bugeaud and M. Weber
\cite {BugWeb}. More precisely, the maximal operator in \e{DuPi}
is estimated by $d$ iterations of the operator in \e{Jes}. Then
the inequality \e{DuPi} is derived by using an interpolation
theorem (\cite{Zyg2}, chap. 12, theorem 4.34). Another elementary
proof of this theorem has also suggested by R.~Nair in \cite
{Nair}. Y.~Bugeaud and M.~Weber in \cite{BugWeb} proved that
\otrm{TDub} is nearly sharp.
\begin{OldTheorem}\label{TBug}For any integer $d\ge 2$ and for any real
number $\varepsilon >0$ with $0 < \varepsilon < 1$, there exist a
sequence $n_k$ of dimension $d$ and a function $f\in
L\log^{d-1-\varepsilon }L(\ZT)$ such that $R_{n_k}f(x)$ is almost
everywhere divergent .
\end{OldTheorem}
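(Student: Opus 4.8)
The plan is to produce the a.e.\ divergence by superposing, over a very rapidly growing sequence of scales, rescaled copies of Rudin's extremal function from \otrm{TRud}, keeping all the denominators that occur inside one fixed set of dimension $d$, and choosing the amplitudes so that the resulting function lies in the class $L\log^{d-\varepsilon}L$ of the statement, just short of the hypothesis of \otrm{TDub}.

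First I would fix $d$ distinct primes $q_1,\dots,q_d$ and work throughout inside $\ZS=\{q_1^{k_1}\cdots q_d^{k_d}:k_1,\dots,k_d\in\ZN\}$, a set of dimension exactly $d$. The structural fact I would rely on is that, directly from \e{Rn} and the Chinese remainder theorem, $R_{mn}=R_m\circ R_n=R_n\circ R_m$ whenever $\gcd(m,n)=1$; thus for $n=q_1^{k_1}\cdots q_d^{k_d}$ the operator $R_n$ factors as a composition of $d$ commuting one-parameter averaging operators, one per prime base, and $\sup_{n\in\ZS}R_n|\cdot|$ behaves like a multiparameter maximal operator of strong type. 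This is exactly the setting in which the loss of $\log^{d-1}$ in \otrm{TDub} is unavoidable. (An alternative would be to make this correspondence precise and import the classical Saks-type counterexamples for strong maximal functions, but I would prefer the direct route below, since it needs only \otrm{TRud}.)

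The core is a single-scale gadget. Applying \otrm{TRud} to a finite Rudin configuration sitting at scale $N$ — an antichain of denominators from $\ZS$ — produces a function valued in $[0,1]$ with arbitrarily small $L^1$ norm whose Riemann sums exceed a fixed constant at some index of the configuration for every $x$ off a small exceptional set. Rescaling it in amplitude, or equivalently pasting together many arithmetically shifted copies on a finer grid, turns this into a gadget $g_N$ for which $R_ng_N(x)$ develops a spike of height bounded below at some $n=n(N,x)\in\ZS$ for all $x$ outside an exceptional set of small measure, with $\|g_N\|_1$ tiny and $\|g_N\|_\infty$ a controlled large power of $N$. The exponent $d-1$ enters by stacking $d$ such one-parameter gadgets, one in each base $q_i$, so that the product of their gains is essentially $(\log N)^{d-1}$; the decisive observation is that, measured in an $L\log$-type norm of order just below $d-1$, a gadget with a spike of \emph{fixed} height costs only a small negative power of $\log N$, so by letting $\|g_N\|_\infty$ blow up fast enough one can keep the accumulated norm finite while the spikes do not decay.

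Finally I would set $f=\sum_m c_m g_{N_m}$, with scales $N_m$ growing so fast that the supports are essentially disjoint (in fact nested) and $\sum_m(\log N_m)^{-\varepsilon}<\infty$, and amplitudes $c_m$ chosen so that $\sum_m c_m\|g_{N_m}\|_{L\log^{d-\varepsilon}L}<\infty$, hence $f\in L\log^{d-\varepsilon}L$, while $c_m$ times the spike height of $g_{N_m}$ stays bounded below. A Borel--Cantelli argument over the small exceptional sets then gives, for a.e.\ $x$, infinitely many $m$ for which $R_nf(x)$ exceeds a fixed $\delta$ at some $n\in\ZS$, whereas along a complementary sequence of much finer denominators the equidistribution of Riemann sums keeps $R_nf(x)$ near $\int_0^1 f$; so $R_nf(x)$ oscillates, hence diverges, for a.e.\ $x$, and collecting all the denominators used into one increasing sequence $\{n_k\}\subset\ZS$ (still of dimension $d$) yields the theorem. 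I expect the principal difficulty to be precisely this final balancing: coordinating the lacunarity of $N_m$, the $d$-fold stacking and the amplitudes $c_m$ so that the logarithmic gain of the $d$-layer construction beats the Orlicz cost by exactly the right margin, and at the same time controlling the cross terms — coarse gadgets seen by fine Riemann sums and fine gadgets seen by coarse ones — so that neither the norm bound nor the recurrence of the spikes at a.e.\ point is destroyed.
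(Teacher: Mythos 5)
You have identified the right intuition (the commuting factorization $R_{mn}=R_mR_n$ for coprime $m,n$ and the analogy with the strong maximal operator), but the engine of your construction does not exist. Inside a fixed set of dimension $d$ --- in particular inside $\ZS=\{q_1^{k_1}\cdots q_d^{k_d}\}$ --- any finite family of denominators in which no member divides the least common multiple of the others has at most $d$ elements: for each of the $d$ chains pick the member with the largest component in that chain; any member not so picked divides the lcm of the picked ones. Hence the hypothesis of \otrm{TRud}, which requires such configurations of every cardinality $n$, can never be met inside $\ZS$, and your single-scale gadget cannot be obtained by rescaling Rudin's function. The obstruction is quantitative, not merely formal: since $\int_\ZT R_n|g|=\|g\|_{L^1}$, Chebyshev's inequality gives $|\{x:\sup_{1\le j\le d}R_{n_j}|g|(x)\ge\delta\}|\le d\,\|g\|_{L^1}/\delta$, so a function of tiny $L^1$ norm cannot develop spikes of fixed height off a small exceptional set when only boundedly many denominators are available; to cover most of the circle you are forced to use on the order of $\delta/\|g\|_{L^1}$ denominators, i.e.\ the full $d$-dimensional grid $\ZS\cap[1,N]$ of cardinality $\asymp(\log N)^d$. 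That is exactly the multiparameter Saks--Stokolos construction you set aside as the ``alternative'': the $(\log N)^{d-1}$ gain does not arise as a product of gains of one-parameter gadgets --- along a single chain Jessen's inequality \e{Jes} is weak $(1,1)$ and there is no gain at all --- but from iterating the $d$ commuting averages on the indicator of one very small, hierarchically structured set, and nothing in your sketch constructs such a set or proves the gain.

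Second, the Orlicz bookkeeping is asserted rather than computed, and as literally stated it cannot close: with $0<\varepsilon<1$ the class $L\log^{d-\varepsilon}L$ is contained in $L\log^{d-1}L$, where \otrm{TDub} guarantees a.e.\ convergence for every set of dimension $d$, so ``a spike of fixed height costs only a small negative power of $\log N$'' is false at this exponent; any correct argument must work strictly below the threshold $x\log^{d-1}x$, and your plan never engages with that threshold. Be aware also that the paper does not prove \otrm{TBug} at all: it is quoted from Bugeaud--Weber (whose proof follows Baker's method), and the paper instead establishes the stronger \trm{T1} by transferring the Riemann maximal function, via the isomorphism of \trm{T4}, to the maximal function over the rectangles \e{Rl} in $\ZT^\infty$, importing Stokolos' sharp divergence theorem for that rare basis, and then globalizing with Stein's principle and \lem{L8} --- essentially the route you declined. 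A workable version of your plan would either carry out that transference or build a Saks-type example directly on $\ZT$; leaning on \otrm{TRud}, whose sweeping-out mechanism lives on antichains and disappears inside a fixed dimension-$d$ set, cannot produce the $\log^{d-1}$ phenomenon.
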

The proof of this theorem is based on the method of R. C. Baker
\cite{Bak}, where author has proved a weaker version of this
theorem. As it is mentioned in \cite{BugWeb} \otrm{TBug} does not
answer precisely whether the class $L\log^{d-1} L(\ZT)$ in the
theorem is optimal or not. In \trm{T1} we prove that this class in
fact is exact and divergence can be everywhere.

In present paper we establish a direct connection between Riemann
maximal functions and ordinary maximal functions in Euclidian
spaces $\ZR^d$. Moreover it turns out, that Riemann maximal
function corresponding to a given finite set of indexes $D$ is
equivalent to a maximal function in Euclidian spaces $\ZR^d$ with
respect to certain $d$-dimensional rectangles which is the content
of \trm{T4} in \sect{S2}. \trm{T4} makes possible to use many
results and methods of maximal functions in this theory. Many
constructions used for Riemann sums get rather simple geometric
interpretation in $\ZR^d$. As applications of \trm{T4} we obtain
below solutions of some problems on Riemann sums. To figure out
the key point of our observation in \sect{S2} we display an
alternative proof to Jessen's theorem using a covering property of
some sets associated with Riemann sums. We will see a resemblance
between this proof and the proof of Hardy-Littlewood maximal
inequality where a covering lemma for intervals is used. In the
last section we deduce Rudin's theorem from \trm{T4} using a
simple geometry of multidimensional rectangles. In the same
section we prove that for a general class of operator sequences
the strong sweeping out and $\delta $-sweeping out properties are
equivalent.

Let $\Phi:\ZR^+\to\ZR^+$ be an increasing convex function. Denote
by $L^\Phi (\ZT)$ the class of functions $f$ on $\ZT$ with
$\Phi(|f|)\in L^1(\ZT)$. If $\Phi $ satisfies $\Delta_2$-condition
$\Phi(2x)\le k\Phi(x)$ then $L^\Phi$ is Banach space with the norm
$\|f\|_{L^\Phi}=\|f\|_\Phi$ to be the least $c>0$ for which the
inequality
\begin{equation*}
\int_\ZT\Phi\left( \frac{|f|}{c}\right)\le 1
\end{equation*}
holds. The following theorem makes correction in the last theorem
and shows that the class $L\log^{d-1} L$ in Theorem C is exact.
\begin{theorem}\label{T1}
Let $n_k$ be the increasing sequence formed the numbers \e{p1d}
with fixed different primes $p_1,p_2,\ldots, p_d$. If an
increasing function $\phi :\ZR^+\to\ZR^+$ satisfies the condition
\begin{equation*}
\lim_{x\to\infty }\frac{\phi(x)}{x\ln^{d-1}x}=0,
\end{equation*}
then there exists a function $f(x)\in L^\phi$ such that the
sequence $R_{n_k}f(x)$ is everywhere divergent.
\end{theorem}
According to the \otrm{TDub}, Riemann sums corresponding to a set
of finite dimension converge a.e. in $L^p$ classes with $p>1$. As
for the sets of infinite dimension it was a problem wether there
exists a sequence of infinite dimension $\{n_k\}$ such that
$R_{n_k}f(x)$ converges for any function $f\in L^p(\ZT)$ with
$p>1$. In \cite{BugWeb} Y.~Bugeaud and M.~Weber discussed a
particular sequence of infinite dimension $E$ consist of all
integers defined
\begin{equation}\label{E1}
E=\{p_1\ldots p_{j-1}\check{p}_j p_{j+1}\ldots p_k:\quad
k=2,3,\ldots ,\,1\le j\le k\}
\end{equation}
where $p_1<p_2<\ldots $ is the sequence of primes and the symbol
$\check{}$ means $p_j$ must be excluded in the product. As it is
proved in \cite{DubPit} $E$ has infinite dimension. In
\cite{BugWeb} (see also \cite{RuWe}) it is proved the almost
everywhere convergence of Riemann sums $R_{n_k}f(x)$ where
$\{n_k\}=E$ for the functions $f\in L^2(\ZT)$ with Fourier
coefficients satisfying
\begin{equation*}
\sum_{n>3}a_n^2\left(\frac{\ln n}{\ln\ln n}\right)<\infty .
\end{equation*}
It is proved also
\begin{equation*}
\lim_{N\to\infty
}\frac{1}{N}\sum_{k=1}^NR_{n_k}f(x)=\int_0^1f(x)dx\hbox { a.e. }
\end{equation*}
for any $f\in L^2(\ZT)$. We proved that Riemann sums associated to
the set $E$ converge a.e. in any $L^p$, $p>1$. Moreover, a.e.
convergence  holds in the Orlicz class $L^\Phi$ corresponding to
the function
\begin{equation}\label{phi2}
\Phi(x)=\frac{x\ln (1+x)}{\ln\ln (3+x)},\quad x\ge 0,
\end{equation}
and this class is the optimal one for the set $E$. So we prove the
following theorems.
\begin{theorem}\label{T2}
Let $E$ be the set defined in \e{E1} and $\Phi (x) $ is the
function \e{phi2}. Then for any $f\in L^\Phi$ we have
\begin{equation*}
\lim_{n\to \infty,\,n\in E }R_nf(x)=\int_0^1f\hbox { a.e. }.
\end{equation*}
Moreover
\begin{equation}\label{T2form}
|\{x\in \ZT:\sup_{n\in E}|R_nf(x)|>\lambda \}|\le
\int_0^1\Phi\left(\frac{c|f|}{\lambda}\right),\quad \lambda >0 ,
\end{equation}
where $c>0$ is an absolute constant.
\end{theorem}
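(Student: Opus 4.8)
The plan is to deduce the a.e.\ convergence from the maximal inequality \e{T2form} by the Banach principle, so that all the work goes into \e{T2form}. Since $\Phi$ satisfies the $\Delta_2$-condition, trigonometric polynomials are dense in $L^\Phi(\ZT)$; and for a trigonometric polynomial $P$ one has $R_nP\to\int_0^1P$ for all large $n$, because $R_n(e^{2\pi ikx})=e^{2\pi ikx}$ if $n\mid k$ and $=0$ otherwise. Hence, once \e{T2form} holds, the functions for which $R_nf\to\int f$ a.e.\ ($n\in E$) form a closed dense subspace of $L^\Phi$, i.e.\ all of it. Replacing $f$ by $f/\lambda$, it remains to prove
\begin{equation*}
\big|\{x\in\ZT:\ M_Ef(x)>1\}\big|\le\int_0^1\Phi\big(c|f|\big),\qquad M_Ef:=\sup_{n\in E}R_n|f|.
\end{equation*}

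The next step is to lay out the structure of $E$. Put $P_m=p_1\cdots p_m$ and $E_m=\{P_m/p_i:\ 1\le i\le m\}$, so $E=\bigcup_{m\ge2}E_m$ is a disjoint union with $\min E_m=P_{m-1}$. Three facts enter. (i) Each $E_m$ has dimension $2$: with the chains $\ZS_1=\{P_j\}_{j\ge0}$ and $\ZS_2=\{p_{m-j+1}\cdots p_m\}_{j\ge0}$ one has that $P_m/p_i$ is the l.c.m.\ of $P_{i-1}$ and $p_{i+1}\cdots p_m$, whence $E_m\subset[\ZS_1,\ZS_2]$; therefore \otrm{TDub} gives, with an absolute constant, a weak-type bound for $M_{E_m}$ of $L\log L$ type. (ii) For fixed $i$ the set $\ZE_i=\{P_m/p_i:\ m\ge i\}$ is a chain and $E=\bigcup_{i\ge1}\ZE_i$, so each $M_{\ZE_i}$ is of weak type $(1,1)$ with constant $1$ by \e{Jes}. (iii) Any integer $k$ is divisible by at most $C(\log k/\log\log k)^2$ members of $E$: if $P_m/p_i\mid k$ then all but at most one of $p_1,\dots,p_m$ divides $k$, so $p_1\cdots p_{m-1}\mid k$, which forces $m=O(\log k/\log\log k)$, and for each such $m$ there are at most $m$ admissible $i$. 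Finally, by \trm{T4}, the truncation of $E$ to the primes $\le p_M$ realizes $M_{E_m}$ ($m\le M$) as a maximal operator in $\ZR^M$ over the rectangles $\rho_{m,i}$ that have full length in the $i$-th coordinate, finest $p_\ell$-adic length in the coordinates $\ell\in\{1,\dots,m\}\setminus\{i\}$, and full length in the remaining coordinates; note that $|\rho_{m,i}|=p_i/P_m$ is tiny and that $\rho_{m,i}$ is a union of $p_i$ translates, along the $i$-th axis, of the finest $P_m$-cell on which $R_{P_m}f$ is constant.

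The heart of the argument is a covering lemma --- in the spirit of \trm{T4} and of the covering-lemma method the paper develops --- for the whole family $\{\rho_{m,i}:\ m\ge2,\ 1\le i\le m\}$ treated at once, rather than scale by scale (a naive union bound over $m$ loses a factor equal to the number of scales and cannot succeed). Granting such a lemma, one linearizes $M_E$, decomposes $f$ dyadically, $f=\sum_j f_j$ with $f_j=f\,\mathbf 1_{\{2^j\le|f|<2^{j+1}\}}$, uses (iii) to bound, for each dyadic height $2^j$, the number of rectangles $\rho_{m,i}$ that can make the relevant average exceed its threshold --- this number is sub-exponential, of order $(j/\log j)^2$ --- and applies the covering lemma together with (i)--(ii); the resulting bookkeeping yields a weak-type bound of the precise order $\frac1\lambda\int\Phi(c|f|)$. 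The factor $1/\log\log$ in $\Phi$ is, via (iii), essentially the reciprocal of the logarithm of the number of members of $E$ lying below the height in play: it is the gap between ``finitely many (polylog) scales'' here and ``infinitely many scales'' for the full strong maximal function in $\ZR^m$, whose optimal Orlicz class $L\log^{m-1}L$ worsens without bound in $m$.

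I expect the covering lemma to be the main obstacle. One must show that the family $\{\rho_{m,i}\}$, which at each scale (fact (i)) is only a two-parameter family of rectangles, each thin in all but one coordinate, admits a covering estimate losing only a power of $\log$ in the number of coordinates --- far better than the strong maximal basis. It is exactly the ``one full coordinate'' geometry of these rectangles (equivalently, the dimension-$2$ structure of the $E_m$) that makes the losses summable and brings the class down to the slightly smaller $L^\Phi$. Optimality of $L^\Phi$, that it cannot be enlarged, is then obtained separately by adapting the construction of \trm{T1} to the chains $\ZE_i$; this part is independent of the convergence statement.
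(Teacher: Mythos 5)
You correctly reduce the theorem to the maximal inequality and you correctly identify the framework: transfer via \trm{T4} to a maximal operator over special rectangles in $\ZT^\infty$, with the explicit recognition that a scale-by-scale union bound over the blocks of $E$ cannot give the sharp class. That is indeed the paper's strategy. But your proposal stops exactly where the proof begins: the covering lemma for the whole family is only postulated (``granting such a lemma''), and that lemma \emph{is} the content of the theorem. In the paper it is \lem{L5}: with $l=p_1\cdots p_d$ one has $l/E=E_d$, the relevant rectangles form the family $\ZF_d$ of sets $B=\bs(B)\cap\tl(B)$ with a one-coordinate tail and a base occupying all coordinates $\mu,\dots,d$; a greedy selection (rectangles ordered by decreasing $\mu$, a rectangle accepted only if it is not covered by the previously chosen ones and the previously chosen tails sitting over its base have total measure $<\frac{3}{4}$) produces a subfamily with the same union and, crucially, with the exponential overlap bound $\int\Psi\left(\frac{1}{3}\sum_j\ZI_{\tilde A_j}\right)\lesssim 1$, where $\Psi$ is the Young complement of $\Phi$. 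The engine behind this is \lem{L2}, a purely probabilistic estimate for independent sets, applicable because rectangles with disjoint spectra are independent in the product space; the weak-type bound then follows in one line from Young's inequality \e{Hold} (\lem{L6}), and Stein's maximal principle upgrades it to \e{T2form}. It is this independence mechanism, not any count of scales, that confines the loss to a single $\log\log$. Your facts (i)--(iii) play no role in the actual proof, and the bookkeeping you sketch in their place is unsupported: an $L\log L$ bound for each block $E_m$ (Theorem C with $d=2$) multiplied by a polylogarithmic number of blocks per dyadic height loses powers of $\log$ rather than gaining the factor $1/\log\log$, so it cannot reach $\Phi$.

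There is also a concrete error in the geometry you do specify. With $l=P_M$, the rectangle that \trm{T4} attaches to $R_n$ for $n=P_m/p_i\in E$ is $B_{l/n}$: it is constrained to a $1/p_i$-cell in the coordinate of the \emph{omitted} prime and to the finest cells in all coordinates $m+1,\dots,M$, it is free in $\{1,\dots,m\}\setminus\{i\}$, and its measure is $(P_m/p_i)/P_M$. Your $\rho_{m,i}$, constrained in $\{1,\dots,m\}\setminus\{i\}$ and of measure $p_i/P_m$, is the complementary object: you have interchanged $D$ with $l/D$ (cf. \e{l/D}). The covering lemma that must be proved lives on the correct ``tail plus upper block'' family, whose nested bases and single-coordinate independent tails are precisely what the selection argument of \lem{L5} exploits; for the family you describe the argument does not get started. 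So the proposal identifies the right framework but leaves the decisive combinatorial--probabilistic step unproved and misidentifies the rectangles it must apply to.
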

This theorem immediately implies
\begin{corollary}
There exists an infinite set $E\subset \ZN$ such that for any
$f\in L^p(\ZT)$ with $p>1$ Riemann sums $R_nf(x)$, $n\in E$
converge a.e.
\end{corollary}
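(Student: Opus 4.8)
The plan is to take $E$ to be precisely the set defined in \e{E1} and to deduce the statement directly from \trm{T2}. Since $E$ contains products of arbitrarily many distinct primes it is clearly infinite (and, as recalled above, it even has infinite dimension, so the statement is not covered by \otrm{TDub}), and thus the only point requiring verification is the inclusion $L^p(\ZT)\subset L^\Phi(\ZT)$ for every $p>1$, where $\Phi$ is the function \e{phi2}. Granting this, if $f\in L^p(\ZT)$ with $p>1$ then $f\in L^\Phi(\ZT)$, and \trm{T2} gives $\lim_{n\to\infty,\,n\in E}R_nf(x)=\int_0^1f$ almost everywhere; in particular the sequence $R_nf(x)$, $n\in E$, converges a.e. Note that the quantitative estimate \e{T2form}, and in particular the absolute constant $c$ appearing there, plays no role here — only the qualitative convergence statement is needed.

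It remains to check $L^p(\ZT)\subset L^\Phi(\ZT)$. Because $\ZT$ has finite measure, a function $f$ lies in $L^\Phi(\ZT)$ as soon as $\int_\ZT\Phi(|f|)<\infty$, and for this it suffices to have a pointwise bound $\Phi(x)\le A_p(1+x^p)$ valid for all $x\ge 0$ with some constant $A_p$. From \e{phi2} we have $\Phi(x)\le x\ln(1+x)$, and since $p>1$ one has $\ln(1+x)=o(x^{p-1})$ as $x\to\infty$; hence $\Phi(x)\le x\ln(1+x)\le A_p(1+x^p)$ for a suitable $A_p$. Consequently, for $f\in L^p(\ZT)$,
\begin{equation*}
\int_\ZT\Phi(|f|)\le A_p\int_\ZT(1+|f|^p)=A_p\bigl(1+\|f\|_{L^p}^p\bigr)<\infty,
\end{equation*}
so $f\in L^\Phi(\ZT)$, as claimed.

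There is essentially no obstacle here: the corollary is a soft consequence of \trm{T2} together with the elementary observation that the Orlicz function \e{phi2} grows more slowly than every power $x^p$ with $p>1$. The only mild subtlety is to interpret membership in the Orlicz class $L^\Phi$ via finiteness of $\int_\ZT\Phi(|f|)$ — so that the finiteness of the measure of $\ZT$ can be exploited — rather than through the Luxemburg norm; but $\Phi$ plainly satisfies the $\Delta_2$-condition, so the two formulations of membership in $L^\Phi(\ZT)$ coincide and the point is immaterial.
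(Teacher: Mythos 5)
Your proof is correct and takes essentially the same route as the paper, which simply asserts that \trm{T2} ``immediately implies'' the corollary with $E$ the set \e{E1}; your verification that $L^p(\ZT)\subset L^\Phi(\ZT)$ for $p>1$ is exactly the easy point being taken for granted there. The only cosmetic slip is the intermediate claim $\Phi(x)\le x\ln(1+x)$, which fails for small $x$ where $\ln\ln(3+x)<1$, but your final bound $\Phi(x)\le A_p(1+x^p)$ still holds since $\Phi$ is continuous and the comparison is only needed as $x\to\infty$.
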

\begin{theorem}\label{T3}
If the sequence $n_1<n_2<\ldots $ consists of all the integers of
the set $E$ and the increasing function $\phi :\ZR^+\to\ZR^+$
satisfies the condition
\begin{equation}\label{phis}
\lim_{x\to\infty}\frac{\phi(x)\ln\ln x}{x\ln x}=0,
\end{equation}
then there exists a function $f(x)\in L^\phi $ such that the
sequence $R_{n_k}f(x)$ is everywhere divergent.
\end{theorem}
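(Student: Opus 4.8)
The plan is to follow the scheme used for \trm{T1}: one constructs a single $f\in L^\phi$ as a superposition $f=\max_j H_j\mathbf{1}_{G_j}$ of building blocks, where the heights $H_j\to\infty$ so fast that already the $j$-th block forces $\sup_{n\in E}R_n(H_j\mathbf{1}_{G_j})(x)$ above a level $\lambda_j\to\infty$ at \emph{every} point $x\in\ZT$, while the relation between $|G_j|$, $H_j$ and the range of admissible indices is tight enough that $\sum_j\phi(H_j)|G_j|<\infty$ under hypothesis \e{phis}. The geometry is furnished by \trm{T4}: on a truncation $E_N=\{n\in E:\ n\le N\}$ the operator $\sup_{n\in E_N}R_n$ is, up to absolute constants, a maximal operator over a family of rectangles whose admissible side ratios are dictated by the prime factorizations in \e{E1}. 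Writing $E=\bigcup_{k\ge2}B_k$ with $B_k=\{p_1\cdots p_k/p_j:\,1\le j\le k\}$ and using that $E$ consists only of squarefree integers, this family is sparse and multiscale --- each $B_k$ produces averages of $f$ over affine slices of a $k$-dimensional cube, one per coordinate direction --- so its maximal inequality is far better than that of the strong maximal function, and \e{T2form} of \trm{T2} is its sharp weak type inequality.

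The crux is the following building block: for every large $t$ there are a measurable $G_t\subset\ZT$ and an index $N(t)$ with
\begin{equation*}
|G_t|\ \asymp\ \frac{\ln\ln t}{t\,\ln t}\,,\qquad
\sup_{n\in E,\ n\le N(t)}R_n\mathbf{1}_{G_t}(x)\ \ge\ \frac1t\quad\text{for every }x\in\ZT .
\end{equation*}
This is the extremal case of \trm{T2}: there \e{T2form} is proved by iterating the weak type $(1,1)$ bound \e{Jes} over the $\asymp\ln N/\ln\ln N$ prime directions available below $N$, and one must now run that chain of inequalities backwards, producing a small set on which those slice--averages genuinely accumulate. Concretely this should be a multiscale, Perron--tree / sprouting type construction inside a cube of dimension $k\asymp\ln N/\ln\ln N$: combining the blocks $B_{k'}$ for all $k'\le k$ --- hence averages over slices of many dimensions and positions --- one arranges $G_t$ so that through \emph{every} point of the cube there passes an admissible slice on which $G_t$ has density $\gtrsim 1/t$. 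That the conclusion holds for every $x$, not merely a.e., is secured by periodizing the configuration along the residues $x\mapsto x+\ell/n$ defining $R_n$, exactly as in Rudin's proof of \otrm{TRud} and in Baker's refinement \cite{Bak}.

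Granting the block, the rest is bookkeeping. Put $\lambda_j=j$ and pick $t_j\to\infty$ so rapidly that
\begin{equation*}
\sum_j\phi(j\,t_j)\,\frac{\ln\ln t_j}{t_j\ln t_j}<\infty\qquad\text{and}\qquad\sum_j j\,\frac{\ln\ln t_j}{\ln t_j}<\infty ;
\end{equation*}
the first is what is needed for $f\in L^\phi$, the second for $f\in L^1$. Such $t_j$ exist: by \e{phis} we may write $\phi(x)=\varepsilon(x)\,x\ln x/\ln\ln x$ with $\varepsilon(x)\to0$, so after elementary simplification the $j$-th term of the first sum is $\lesssim j\,\varepsilon(j\,t_j)$, hence $\le2^{-j}$ as soon as $t_j$ is large, and the second term is handled the same way. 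This works precisely because of \e{phis}; by contrast \trm{T2} shows that no divergent $f$ exists in $L^\Phi$ itself, so the class is optimal. Now set $f=\max_j j\,t_j\,\mathbf{1}_{G_{t_j}}$ (equal to $+\infty$ on the null set --- by Borel--Cantelli --- where infinitely many $G_{t_j}$ overlap). Then $\int_\ZT\phi(f)\le\phi(0)+\sum_j\phi(j\,t_j)\,|G_{t_j}|<\infty$ and $\int_\ZT f\le\sum_j j\,t_j\,|G_{t_j}|<\infty$, so $f\in L^\phi\cap L^1$. Fix any $x\in\ZT$; for each $j$ the block property gives some $n_j\in E$ with $R_{n_j}f(x)\ge j\,t_j\cdot R_{n_j}\mathbf{1}_{G_{t_j}}(x)\ge j$ (the value $+\infty$ being permitted). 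Hence $\{R_nf(x):n\in E\}$ is unbounded and $R_{n_k}f(x)$ does not converge; as $x$ was arbitrary, $R_{n_k}f(x)$ is everywhere divergent.

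The hard part will be the quantitative building block of the second paragraph: producing a genuinely multiscale Besicovitch / Perron--tree type set, adapted to the squarefree index family $E$ in dimension $\asymp\ln N/\ln\ln N$, of measure as small as $\tfrac{\ln\ln t}{t\,\ln t}$ whose associated slice--maximal function stays $\ge1/t$ at \emph{every} point --- in other words, showing that the iteration which yields \e{T2form} is essentially lossless. Once that is in hand, the reduction to finite truncations via \trm{T4}, the superposition, and the identification of \e{phis} as the borderline are all routine.
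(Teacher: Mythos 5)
Your argument reduces the theorem to a quantitative ``building block'' --- a set $G_t\subset\ZT$ with $|G_t|\asymp\frac{\ln\ln t}{t\ln t}$ whose Riemann maximal function over $E$ exceeds $1/t$ at \emph{every} point of $\ZT$ --- but you never construct it: you only indicate that it ``should be a multiscale Perron--tree / sprouting type construction'' in dimension $\asymp\ln t/\ln\ln t$, with everywhere-largeness obtained by ``periodizing as in Rudin.'' That block is the entire content of the theorem (the superposition and the verification that \e{phis} makes the $\phi$-norms summable are, as you say, routine bookkeeping), so as it stands the proof has a hole exactly at the decisive step. What you have done correctly is identify the extremal scale $|G_t|\asymp\frac{\ln\ln t}{t\ln t}$ forced by \e{T2form} and the effective dimension $d\asymp\ln t/\ln\ln t$; but naming the target is not producing the set.

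For comparison, the paper obtains the block on $\ZT^\infty$ by a one-line product construction, with no Besicovitch-type geometry at all: with $l=l_{2d}$ as in \e{lE} one takes the slabs $B_i^k=\{x:\ i/p_k\le x_k<(i+1)/p_k\}$, the unions $G_k$ of $[p_k/d]$ of them as in \e{Gk}, $G=\bigcup_{k=d+1}^{2d}G_k$ and the corner set $C=\bigcap_{k=d+1}^{2d}G_k$ as in \e{G}. Independence of coordinates gives $|G|>\tfrac13$, $|C|\le d^{-d}$, and for $x\in G$ the slab containing $x$ has $C$-density $|C|/|G_k|\ge d\,|C|$, so $\ZM_{l_{2d}/E}\ZI_C\gtrsim d\,|C|$ on $G$ --- precisely your extremal ratio, since $\ln(1/|C|)/\ln\ln(1/|C|)\asymp d$. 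This is transferred back to $\ZT$ via \trm{T4} and its corollary \e{cor}. Equally important, the paper does not need each block to be large at every point, nor an explicit superposition: largeness on a set of measure $>\tfrac13$ with heights $c_d\to\infty$ already contradicts any weak-type bound, Stein's principle then yields a single $f\in L^\phi$ with $\zR_Ef=\infty$ a.e., and \lem{L8} (translations, Borel--Cantelli, and adding indicators of small complements at the discrete levels $\zI_{l_n}$) upgrades a.e.\ to everywhere divergence. So your insistence on per-block everywhere estimates via an unexplained periodization is both unproven and unnecessary; the missing construction is the real gap, and it turns out to be far simpler than the Perron-tree mechanism you anticipate once one works on $\ZT^\infty$ through \trm{T4}.
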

\end{section}

\begin{section}{Notations}
We recall some definitions in measure theory (see \cite{Hal}). Let
$X$ to be an arbitrary set. A family $\Omega $ of subsets of $X$
is called algebra if it is closed with respect to the operations
of union, intersection and difference and $X\in\Omega $. If the
algebra is closed also with respect to countable union it is
called $\sigma$-algebra. The set $A$ is called atom for the
algebra $\Omega $ if there is no nonempty $B\in \Omega $ so that
$B\subset A$. We note that if the algebra $\Omega $ is finite then
any set from $\Omega $ is a union of some atoms of $\Omega $. If
there is also a measure $\mu$ on $\Omega $ we denote this measure
space by $(X,\Omega,\mu)$. It is said the measure spaces
$(X,\Omega,\mu)$ and $(Y,\Delta,\nu)$ are isomorph if there exists
a one to one mapping $\gamma :\Omega\to\Delta$ called isomorphism
such that
\begin{equation*}
\gamma(A-B)=\gamma(A)-\gamma(B),\quad
\gamma\left(\bigcup_{k=1}^\infty A_k\right)= \bigcup_{k=1}^\infty
\gamma(A_k),
\end{equation*}
and
\begin{equation*}
 \nu(\gamma (A))=\mu(A),
\end{equation*}
for any sets $A,B$ and $A_k$, $k=1,2,\ldots $, from $\Omega$. If
$\Omega$ is not $\sigma $-algebra we suppose in addition
$\cup_{k=1}^\infty A_k\in \Omega$.  We will say $f:\,X\to Y$ is an
isomorphism function if the set function $\gamma(A)=\{y\in Y:\,
y=f(x),\, x\in A\}$ determines one to one mapping between $\Omega
$ and $\Delta $ which is an isomorphism. Suppose the algebras
$\Omega$ and $\Delta$ are finite and have atoms $A_1,A_2,\ldots
,A_n$ and $B_1,B_2,\dots ,B_n$ respectively. It is clear if
$\nu(A_i)=\mu(B_i)$, $i=1,2,\ldots ,n$, then the measure spaces
$(\Omega,\mu )$ and $(\Delta,\nu )$ are isomorph.

We consider the probability space $(\ZT^\infty,\lambda)
=\prod_{i=1}^\infty (\ZT_i,\lambda_i)$ where each
$(\ZT_i,\lambda_i)$ is the Lebesgue probability space on $\ZT$.
Remind that measurable sets in $\ZT^\infty $ is generated from all
the products $A=\prod_{i=1}^\infty A_i$ where each $A_i$ is
Lebesgue measurable set in $\ZT$ and only finite number of them
differ from $\ZT$ (see definition in \cite{Nev}, chap. III.3). The
measure in $\ZT^\infty$ is the extension of the measure
$\lambda(A)=\prod_{i=1}^\infty |A_i|$. We will use $|A|$ to
indicate measure of $A\subset \ZT^\infty $.

Let $l\in \ZN$ and $D\subset\ZN $ is finite. We will write $D|l$
if any member of $D$ divides $l$. We denote
\begin{equation}\label{l/D}
l/D=\{n\in\ZN:\, \frac{l}{n}\in D\}.
\end{equation}

An important subject in this paper is the relationship between
three type of sets. Namely we will consider Riemann sets, integer
arithmetic progressions and special rectangles in $\ZT^\infty$
having the following descriptions.

 \textit{Riemann sets:} We denote by $\zI_l$ the algebra in $\ZT$
generated by intervals $[\frac{j}{l},\frac{j+1}{l})$,
$j=0,1,\ldots l-1$. Define Riemann sets
\begin{equation}\label{form}
I_l(n,t)=\bigcup_{i=0}^{n-1}\bigg[\frac{t}{l}+\frac{i}{n},
\frac{t+1}{l}+\frac{i}{n}\bigg),\quad
t=0,1,\ldots ,\frac{l}{n}.
\end{equation}
where $n$ divides $l$. Certainly we have
\begin{equation*}
I_l(n,t)\in \zI_l,\quad \lambda(I_l(n,t))=\frac{n}{l}.
\end{equation*}
For fixed $l$ and $n$ dividing $l$ the collection \e{form} is a
pairwise disjoint partition of $[0,1]$. It is easy to verify if
$x\in [k/l,(k+1)/l)$ then
\begin{equation*}
l\int_{k/l}^{(k+1)/l}R_nf(t)dt=\frac{1}{|I_l(n,t)|}\int_{I_l(n,t)}
f(t)dt,\quad x\in I_l(n,t).
\end{equation*}
Thus, using Lebesgue's theorem on $\ZR$, we get
\begin{equation}\label{limRm}
\lim_{l\to \infty,\,x\in
I_l(n,t)}\frac{1}{|I_l(n,t)|}\int_{I_l(n,t)}f(t)dt=R_nf(x)\hbox{
a.e. }, \quad n=1,2,\cdots .
\end{equation}
For any subset $D\subset \ZN$ we define
\begin{equation}\label{ZM}
\zR_D^lf(x)=\sup_{n\in D:\, n|l,\,x\in I_l(n,t)}
\frac{1}{|I_l(n,t)|}\int_{I_l(n,t)} |f(t)|dt.
\end{equation}
If $D_n$ are finite subsets of $D$ with $\cup_n D_n=D$ then from
\e{limRm} it follows that
\begin{equation}\label{RDMD}
\zR_Df(x)=\sup_{n\in D}R_nf(x)=\lim_{n\to \infty }\lim_{l\to
\infty :\, D_n|l}\zR_D^lf(x) \hbox { a.e. }.
\end{equation}
Observe that if $f(x)$ is $\zI_l$-measurable then
\begin{equation}\label{DDl}
\zR_Df(x)=\zR_D^lf(x).
\end{equation}
Indeed, since from $l|l'$ follows $\ZA_l\subset \ZA_{l'}$ we
derive
\begin{equation*}
\zR_D^{l'}f(x)=\zR_D^lf(x)\hbox { if } l|l',
\end{equation*}
and therefore by \e{RDMD} we have
\begin{equation*}
\zR_Df(x)=\lim_{l'\to\infty :\,l|l' }\zR_D^{l'}f(x)=\zR_D^lf(x).
\end{equation*}

\textit{Arithmetic progressions:} We shall say a set of integers
$A$ is $l$-periodic if $A=l+A$. We denote by $\ZA_l$ the family of
all $l$-periodic sets of integers and $\ZA=\cup_{l\in\ZN}\ZA_l$.
It is clear if $l|l'$ then any $l$-periodic set is $l'$-periodic,
i.e. $\ZA_l\subset \ZA_{l'}$. Observe that $\ZA$ and each $\ZA_l$
are algebras. We define the measure of a set $A\in\ZA $ by
\begin{equation*}
\delta(A)=\lim_{l\to\infty}\frac{\#(A\cap [0,l))}{l},
\end{equation*}
where $\#B$ denotes the cardinality of the finite set $B$. It is
clear that the limit exists and if $A\in\ZA_l$ then
\begin{equation*}
\delta(A)=\frac{\#(A\cap [0,l))}{l}.
\end{equation*}
Observe that $\delta$ is an additive measure on $\ZA$. Now
consider the arithmetic progressions
\begin{equation}\label{Al}
A_l(t)=\{ lj+t,\, j\in \ZZ\},\quad 0\le t < l.
\end{equation}
It is clear
\begin{equation*}
A_l(t)\in \ZA_l,\quad \delta\big(A_l(t)\big)=\frac{1}{l},
\end{equation*}
and any set from $\ZA_l$ can be written as a finite union of these
arithmetic progressions. It means the sets in \e{Al} are the atoms
of the algebra $\ZA_l$.

\textit{Rectangles in $\ZT^\infty$:} We denote $p_1<p_2\ldots
<p_d<\ldots $ the sequence of all primes. Consider an integer $l$
with factorization
\begin{equation}\label{l}
l=p_1^{l_1} p_2^{l_2}\cdots p_d^{l_d}.
\end{equation}
We do not exclude that some of the numbers $l_k$ are zero. Define
rectangles in $\ZT^\infty $ by
\begin{equation}\label{Rl}
B_l(j_1,\cdots , j_d)= \left\{x\in\ZT^\infty :\,
\frac{j_k}{p_k^{l_k}}\le x_k<\frac{j_k+1}{p_k^{l_k}},\,
k=1,2,\ldots ,d\right\},
\end{equation}
where
\begin{equation*}
0\le j_k<p_k^{l_k},\quad k=1,2,\cdots ,d,\quad
x=(x_1,x_2,\ldots,x_k,\ldots ).
\end{equation*}
We denote by $\ZB_l$ the algebra generated of all the finite
unions of the rectangles \e{Rl}. So the family
\begin{equation*}
\ZB=\bigcup_{l\in \ZN}\ZB_l.
\end{equation*}
is an algebra in $\ZT^\infty $. We note that $\ZB_l\subset
\ZB_{l'}$ while $l|l'$. We shall consider the measure space $(\ZB
,\lambda )$, where $\lambda $ is the Lebesgue's measure on
$\ZT^\infty $. It is clear
\begin{equation*}
\lambda(B_l(j_1,\cdots , j_d))=\frac{1}{l}.
\end{equation*}

It is clear that $(\zI_l,\lambda )$, $(\ZA_l,\delta )$ and
$(\ZB_l,\lambda )$ are isomorph, because all have $l$ atoms with
with equal measures. In \sect{S3} we are going to construct a
special isomorphism between $\ZA$ and $\ZB$ assigning the
arithmetic progressions \e{Al} to the rectangles \e{Rl}.
\end{section}

\begin{section}{An alternative proof of Jessen's theorem}\label{S2}

Operators \e{RDMD} play a significant role in the study of a.e.
convergence of Riemann sums. To prove Jessen's theorem it is
enough to prove the inequality \e{Jes}, because \e{Jesae} follows
from \e{Jes} by using Banach principle. So we suppose
$D=\{m_1,m_2,\cdots, m_d ,\ldots\}$ where $m_k$ divides $m_{k+1}$.
We fix a finite subset $U=\{m_1,m_2,\cdots, m_d\}\subset D$ and an
integer $l$ divided by $m_d$ and so all $m_k$, $1\le k\le d$. It
is clear
\begin{equation*}
\{\zR_U^lf(x)>\lambda \}=\bigcup_jI_j,
\end{equation*}
where $I_j$ are Riemann sets form $\zI_l$ with
\begin{equation*}
 \frac{1}{|{I_j}|}\int_{I_j}
|f(t)|dt>\lambda .
\end{equation*}
We will prove that it may be chosen a subfamily of mutually
disjoint sets $\{\tilde I_j\}$ such that
\begin{equation}\label{Itil}
\bigcup_jI_j=\bigcup_j\tilde I_j.
\end{equation}
 We define $\priority (I)$=n if $I$ has
the form \e{form}. It is easy to observe that if $\priority (I)$
divides $\priority (J)$ and $I\cap J\neq\varnothing $ then we have
$I\subseteq J $. We take $\tilde I_1$ to be some of $I_j $ with
highest priority. Suppose we have chosen $\tilde I_1, \tilde I_2,
\cdots ,\tilde I_m$. We consider all $I_j$'s with
$I_j\nsubseteq\bigcup_{j=1}^m\tilde I_j $ and so
$I_j\cap\left(\cup_{j=1}^m\tilde I_j \right)=\varnothing $. We
take $\tilde I_{m+1}$ among these sets having an highest priority.
Certainly this process generates a subcollection $\{\tilde I_j\}$
of mutually disjoint sets with \e{Itil}. Thus we obtain
\begin{equation*}
|\{\zR_B^lf(x)>\lambda \}|=\big|\bigcup_j I_j\big|=\sum_j|\tilde
I_j| <\frac{1}{\lambda} \sum_j \int_{\tilde I_j} |f(t)|dt\le
\frac{\|f\|_{L^1}}{\lambda}.
\end{equation*}
Since the inequality is true for any finite $U\subset D$, applying
\e{RDMD} we get \e{Jes}.
\end{section}


\begin{section}{An isomorphism between arithmetic progressions and rectangles}\label{S3}
Let $l$ be an integer with factorization \e{l} and
\begin{equation}\label{m}
m=p_1^{m_1} p_2^{m_2}\cdots p_d^{m_d},\quad 0\le m_k\le l_k,\quad
k=1,2,\cdots ,d.
\end{equation}
From the definition \e{Rl} it follows that
\begin{equation}\label{RmRl}
B_m(t_1,\ldots ,t_d)=\bigcup_{p_k^{l_k-m_k} t_k\le
s_k<p_k^{l_k-m_k}\big( t_k+1\big)} B_l( s_1,\cdots , s_d).
\end{equation}
For a fixed integer $t$ we consider the set of integer vectors
\begin{equation}\label{sk}
S_t=\{(s_1,s_2,\cdots ,s_d):\,0\le s_k<p_k^{l_k},\quad
s_k=t\mod p_k^{m_k},\quad k=1,2,\cdots ,d,\}
\end{equation}
In fact $S_t$ depends also on $l$ and $m$.
\begin{lemma}
There exists a one to one correspondence from
\begin{equation}\label{uset}
U=\{0,1,\cdots ,\frac{l}{m}-1\}
\end{equation}
to the set of vectors \e{sk}, such that the vector
$(s_1,s_2,\cdots ,s_d)\in S_t$ assigned to $u\in U$ satisfies
\begin{equation}\label{mut}
s_k=(mu+t)\mod p_k^{l_k},\quad k=1,2,\cdots ,d.
\end{equation}
\end{lemma}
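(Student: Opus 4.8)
The plan is to construct the correspondence explicitly and verify it is a bijection by a dimension/counting argument combined with the Chinese Remainder Theorem. First I would note that the numbers $p_1^{l_1}, p_2^{l_2}, \ldots, p_d^{l_d}$ are pairwise coprime, so by the Chinese Remainder Theorem the map $n \mapsto (n \bmod p_1^{l_1}, \ldots, n \bmod p_d^{l_d})$ is a ring isomorphism from $\ZZ/l\ZZ$ onto $\prod_{k=1}^d \ZZ/p_k^{l_k}\ZZ$. The idea is to use the residue class of $t$ to fix a ``base point'' and then let $u$ range over multiples of $m$. Concretely, for $u \in U$ set $s_k = (mu+t) \bmod p_k^{l_k}$ for each $k$; this defines the map $u \mapsto (s_1,\ldots,s_d)$ claimed in \e{mut}.

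Next I would check that this map indeed lands in $S_t$. Since $m = p_1^{m_1}\cdots p_d^{m_d}$ and $m_k \le l_k$, we have $p_k^{m_k} \mid m$, so $mu + t \equiv t \pmod{p_k^{m_k}}$, and hence $s_k \equiv (mu+t) \equiv t \pmod{p_k^{m_k}}$ for every $k$; together with $0 \le s_k < p_k^{l_k}$ this is exactly the defining condition \e{sk} of $S_t$. So the map sends $U$ into $S_t$.

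To see it is a bijection, I would argue by counting on both sides and proving injectivity. The set $U$ has $l/m = \prod_k p_k^{l_k - m_k}$ elements. For $S_t$: fixing $k$, the residues $s_k$ with $0 \le s_k < p_k^{l_k}$ and $s_k \equiv t \pmod{p_k^{m_k}}$ are precisely $t_0 + j p_k^{m_k}$ with $0 \le j < p_k^{l_k - m_k}$ (where $t_0 = t \bmod p_k^{m_k}$), so there are $p_k^{l_k - m_k}$ choices for each coordinate, giving $\#S_t = \prod_k p_k^{l_k-m_k} = l/m = \#U$. For injectivity, suppose $u, u' \in U$ give the same vector; then $mu + t \equiv mu' + t \pmod{p_k^{l_k}}$ for all $k$, hence $m(u-u') \equiv 0 \pmod{p_k^{l_k}}$, so $p_k^{l_k-m_k} \mid (u - u')$ for every $k$. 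Since the $p_k^{l_k - m_k}$ are pairwise coprime, $\prod_k p_k^{l_k-m_k} = l/m$ divides $u-u'$; but $|u - u'| < l/m$, forcing $u = u'$. An injective map between finite sets of equal cardinality is a bijection, which completes the proof.

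I expect the only mild obstacle to be keeping the bookkeeping of the two conditions $0 \le s_k < p_k^{l_k}$ and $s_k \equiv t \pmod{p_k^{m_k}}$ straight, and in particular being careful that $t$ is allowed to range over $\{0,1,\ldots,l-1\}$ while $s_k$ is reduced modulo the smaller $p_k^{m_k}$ in the congruence but modulo the larger $p_k^{l_k}$ in the range constraint — but this is precisely handled by the explicit formula \e{mut} together with the divisibility $p_k^{m_k} \mid m$. Everything else is a direct application of the Chinese Remainder Theorem and elementary divisibility, so no genuinely hard step arises.
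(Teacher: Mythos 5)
Your proof is correct and follows essentially the same route as the paper: the same explicit assignment $s_k=(mu+t)\bmod p_k^{l_k}$, the same verification via $p_k^{m_k}\mid m$ that the image lies in $S_t$, the same count $\#S_t=\prod_k p_k^{l_k-m_k}=l/m=\#U$, and the same injectivity argument from $m(u-u')\equiv 0 \pmod{p_k^{l_k}}$ together with $|u-u'|<l/m$. Your version just spells out the divisibility step ($p_k^{l_k-m_k}\mid(u-u')$ and pairwise coprimality) a little more explicitly than the paper does.
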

\begin{proof} We note that there are $p_k^{l_k-m_k}$ number of
$s_k$'s satisfying
\begin{equation*}
0\le s_k<p_k^{l_k},\quad s_k=t\mod p_k^{m_k}.
\end{equation*}
So we have
\begin{equation*}
\#S_t=\prod_{k=1}^dp_k^{l_k-m_k}=\frac{l}{m}=\#U.
\end{equation*}
Thus, it is enough to prove that for any $u\in U$ there exists a
vector $(s_1,\cdots ,s_d)\in S$ with \e{mut}, and the images of
different $u$'s are different. To determine the vector
$(s_1,\ldots ,s_d)$ corresponding to $u$ we define $s_k$ to be the
remainder when $mu+t$ is divided by $p_k^{l_k}$. Certainly
$(s_1,\ldots ,s_d)$ satisfies \e{mut} and $0\le s_k<p_k^{l_k}$.
Since $p_k^{m_k}|m$ we get $s_k=t\mod p_k^{m_k}$. So $(s_1,\ldots
,s_d)\in S_t$. Now we suppose $(s_1,s_2,\cdots ,s_d)\in S_t$ is
assigned to different $u_1,u_2\in U$. The numbers $u_1$ and $u_2$
satisfy the relation \e{mut}. Hence, we get
\begin{equation*}
m(u_1-u_2)=0\mod p_k^{l_k},\quad k=1,2,\cdots ,d.
\end{equation*}
Since $0\le u_1-u_2<\frac{l}{m}$, using \e{l} and \e{m}, we
conclude $u_1-u_2=0$.
\end{proof}
\noindent
Let $p\ge 2$ be an integer. Any nonnegative integer $a$ has
$p$-adic decomposition
\begin{equation*}
a=a_0p^k+a_1p^{k-1}+\cdots +a_k,\quad 0\le a_i<p.
\end{equation*}
We denote
\begin{equation*}
(a)_p=a_kp^k+a_{k-1}p^{k-1}+\cdots +a_0,
\end{equation*}
the integer with revers arrangement of $p$-digits of $a$. We shall
say that $(a)_p$ is the $p$-reverse of $a$. We note this action
defines a one to one mapping of the set of integers $\{0,1,\cdots
,p^k-1\}$ into itself. Notice if
\begin{equation*}
s=p^iv+t,\quad 0\le v<p^{j-i},\quad 0\le t<p^i,\,i\le j,
\end{equation*}
then
\begin{equation}\label{sbar}
\bar s=p^{j-i}\bar t+\bar v.
\end{equation}
It is easy to observe that for a fixed $t$ the correspondence
$s\to \bar s$ is a one to one mapping between the sets
\begin{equation*}
\{s:\, s=p^iv+t,\,0\le v<p^{j-i}\}\hbox{ and } \{s:\,
s=p^{j-i}\bar t+v,\,0\le v<p^{j-i}\}.
\end{equation*}
\begin{lemma}\label{L1}
There exists an isomorphism $\alpha $ from the measure space
$(\ZA,\delta )$ to $(\ZB,\lambda )$ assigning any progression
\e{Al} to a rectangle \e{Rl}.
\end{lemma}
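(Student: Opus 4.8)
The plan is to define $\alpha$ first on the atoms of each algebra $\ZA_l$ and then to extend it by finite unions, once the pieces are seen to be compatible. Recall that for an integer $l$ with factorization \e{l} the atoms of $\ZA_l$ are the progressions $A_l(t)$, $0\le t<l$, and the atoms of $\ZB_l$ are the rectangles $B_l(j_1,\dots,j_d)$, $0\le j_k<p_k^{l_k}$, all of measure $1/l$. Given $0\le t<l$, let $t_k$ be the residue of $t$ modulo $p_k^{l_k}$ and set
\begin{equation}\label{alphadef}
\alpha\big(A_l(t)\big)=B_l\big((t_1)_{p_1},(t_2)_{p_2},\dots,(t_d)_{p_d}\big),
\end{equation}
where $(t_k)_{p_k}$ denotes the $p_k$-reverse of $t_k$ inside $\{0,1,\dots,p_k^{l_k}-1\}$. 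By the Chinese remainder theorem $t\mapsto(t_1,\dots,t_d)$ is a bijection of $\{0,\dots,l-1\}$ onto $\prod_{k=1}^d\{0,\dots,p_k^{l_k}-1\}$, and each $p_k$-reverse is a bijection of $\{0,\dots,p_k^{l_k}-1\}$ onto itself; hence \e{alphadef} is a measure-preserving bijection between the atoms of $\ZA_l$ and those of $\ZB_l$, and since finite algebras whose atoms correspond bijectively and with equal measures are isomorphic, it extends to an isomorphism $\alpha_l:(\ZA_l,\delta)\to(\ZB_l,\lambda)$ carrying each progression \e{Al} to a rectangle \e{Rl}.

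The crux of the argument --- and the step I expect to be the main obstacle --- is that the maps $\alpha_l$ are mutually compatible: for $m\mid l$ one has $\ZA_m\subset\ZA_l$ and $\ZB_m\subset\ZB_l$, and one must check that $\alpha_l$ restricted to $\ZA_m$ is $\alpha_m$. Since the passage from $m$ to $l$ factors into finitely many one-prime steps, it suffices to treat $l=mp_k$, so that $l_k=m_k+1$ and $l_j=m_j$ for $j\ne k$ (the prime $p_k$ need not already divide $m$). Writing $r_i$ for the residue of $t$ modulo $p_i^{m_i}$, the atom $A_m(t)$ is the disjoint union of the atoms $A_l(t+ma)$, $a=0,\dots,p_k-1$, of $\ZA_l$; for these the $j$-th residue modulo $p_j^{l_j}$ is unchanged when $j\ne k$, whereas the $k$-th residue runs exactly through $\{\,p_k^{m_k}v+r_k:0\le v<p_k\,\}$. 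On the rectangle side, by \e{RmRl} the rectangle $\alpha_m(A_m(t))$ --- whose $k$-th index is $(r_k)_{p_k}$ --- is the disjoint union of the rectangles obtained from it by replacing that index by $p_k(r_k)_{p_k}+v$, $0\le v<p_k$. Now \e{sbar}, applied with $p=p_k$, $i=m_k$, $j=m_k+1$ (equivalently, the one-to-one correspondence recorded right after it), shows that the $p_k$-reverse sends $p_k^{m_k}v+r_k$ to $p_k(r_k)_{p_k}+v$; hence $\alpha_l$ maps the atoms lying under $A_m(t)$ bijectively onto those lying under $\alpha_m(A_m(t))$, which is exactly the required compatibility. (Alternatively, one can carry out this computation in one stroke for a general divisor $m$ of $l$, using the preceding lemma on the sets $S_t$ together with \e{RmRl} and \e{sbar}.)

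With compatibility established, the maps $\alpha_l$ glue to a single map $\alpha$ on $\ZA=\bigcup_l\ZA_l$: every $A\in\ZA$ lies in some $\ZA_l$, and $\alpha(A):=\alpha_l(A)$ does not depend on the chosen $l$. That $\alpha$ is an isomorphism of $(\ZA,\delta)$ onto $(\ZB,\lambda)$ then follows readily: any finitely many sets of $\ZA$ lie in a common $\ZA_l$, so the identities $\alpha(A-B)=\alpha(A)-\alpha(B)$, $\alpha(A\cup B)=\alpha(A)\cup\alpha(B)$ and $\lambda(\alpha(A))=\delta(A)$ are inherited from $\alpha_l$; and $\alpha$ is onto $\ZB=\bigcup_l\ZB_l$ because each $\alpha_l$ is onto $\ZB_l$. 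By \e{alphadef}, this $\alpha$ assigns to every progression \e{Al} a rectangle \e{Rl}, which is the assertion of the lemma.
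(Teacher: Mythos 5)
Your proposal is correct and follows essentially the same route as the paper: the same definition of $\alpha$ on atoms via the residues $t_k=t\mod p_k^{l_k}$ and their $p_k$-reverses, the same compatibility check under refinement using \e{RmRl} and \e{sbar}, and the same gluing over $\ZA=\cup_l\ZA_l$. The only difference is organizational: you verify compatibility one prime at a time ($l=mp_k$), whereas the paper treats a general divisor $m\mid l$ in one stroke via its counting lemma on the sets $S_t$ — an alternative you yourself note in passing.
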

\begin{proof}
At first we define $\alpha $ on the progressions \e{Al}. We take
an arbitrary $A_l(t)$. Suppose
\begin{equation*}
t_k=t\mod p_k^{l_k}, \quad 0\le t_k< p_k^{l_k},\quad k=1,2,\ldots
,d,
\end{equation*}
and denote by $\bar t_k$ the $p_k$-reverse of the integer $t_k$.
We have $0\le \bar t_k< p_k^{l_k}$. We define
\begin{equation}\label{Phidef}
\alpha\big(A_l(t)\big)=B_l(\bar t_1,\cdots , \bar t_d).
\end{equation}
According the definition \e{Al} for a given arithmetic progression
$A_m(t)$ we have
\begin{equation}\label{Amt}
A_m(t)=\bigcup_{u=0}^{l/m-1}A_l(mu+t).
\end{equation}
We shall prove that
\begin{equation}\label{Phieq}
\alpha\big(A_m(t)\big)=\bigcup_{u=0}^{l/m-1}\alpha\big(A_l(mu+t)\big).
\end{equation}
According to \lem{L1} there exists a one to one mapping between
the sets $U$ and $S_t$ defined in \e{sk} and \e{uset}. In
addition, if $(s_1,s_2,\cdots ,s_d)\in S_t$ is assigned to a given
$u\in U$ then it satisfies the condition \e{mut} and therefore by
\e{Phidef} we have
\begin{equation}\label{Amut}
\alpha(A_l(mu+t))=B_l(\bar s_1,\ldots ,\bar s_d).
\end{equation}
Now let $t_k$ be the remainder when $t$ is divided by $p_k^{m_k}$,
i.e.
\begin{equation}\label{tkt}
t_k=t \mod p_k^{m_k}, \quad 0\le t_k<p_k^{m_k}.
\end{equation}
From \e{Phidef} we get
\begin{equation*}
\alpha\big(A_m(t)\big)=B_m(\bar t_1,\cdots , \bar t_d).
\end{equation*}
From \e{sk} it follows that $p^{m_k}$ divides $s_k-t$ and
therefore by \e{tkt} it divides also $s_k-t_k$. So we have
\begin{equation*}
s_k=p_k^{m_k}v_k+t_k,\quad 0\le v_k<p_k^{l_k-m_k},\quad 0\le
t_k<p_k^{m_k}.
\end{equation*}
Thus, according to \e{sbar}, for the $p_k$-revers $\bar s_k$ of
the integer $s_k$ we have
\begin{equation*}
\bar s_k=p_k^{l_k-m_k}\bar t_k+\bar v_k,\quad  0\le \bar
v_k<p_k^{l_k-m_k},\quad 0\le \bar t_k<p_k^{m_k},
\end{equation*}
where $\bar v_k$ and $\bar t_k$ are the $p_k$-reverses of $v_k$
and $t_k$ respectively. Hence for any  $u\in U$ may be determined
$(\bar s_1,\cdots ,\bar s_d)$ with
\begin{equation}\label{barvk}
p_k^{l_k-m_k}\bar t_k\le\bar s_k<p_k^{l_k-m_k}\big(\bar
t_k+1\big).
\end{equation}
In addition, it is easy to check this correspondence is a one to
one mapping from $U$ to the set of vectors $(\bar s_1,\bar
s_2,\cdots ,\bar s_d)$ with \e{barvk}. Therefore, according to
\e{Amt}, \e{Amut} and \e{RmRl}, we get
\begin{multline*}
\alpha\big(A_m(t)\big)=B_m(\bar t_1,\cdots , \bar
t_d)\\
=\bigcup_{p_k^{l_k-m_k}\bar t_k\le  \bar
s_k<p_k^{l_k-m_k}\big(\bar t_k+1\big)} B_l( \bar s_1,\cdots , \bar
s_d)=\bigcup_{u=0}^{l/m-1}\alpha\big(A_l(mu+t)\big).
\end{multline*}
So \e{Phieq} is true. Now take an arbitrary set $A\in \ZA$. We
have $A\in \ZA_l$ for some $l\in \ZN$. Since \e{Al} are the atoms
of $\ZA_l$, the set $A$ is a union of some mutually disjoint
atoms, i.e.
\begin{equation}\label{Arep}
A=\bigcup_{i\in I}A_l(i).
\end{equation}
We define
\begin{equation}\label{PArep}
\alpha(A)=\bigcup_{i\in I}\alpha(A_l(i)).
\end{equation}
Since $A$ belongs to different algebras $\ZA_l$, there are
different representations \e{Arep} corresponding to different
$l$'s. However, using \e{Phieq}, it is easy to verify that the
right side of \e{PArep} does not depend on the representation
\e{Arep}. On the other hand $\alpha $ is measure preserving,
because $\delta(A_l(i))=\lambda(\alpha(A_l(i)))=1/l$ by
\e{Phidef}. So we conclude that $\alpha$ is an isomorphism from
$\ZA$ to $\ZB$. In addition, according to \e{Phidef} it assigns
any progression \e{Al} to a rectangle \e{Rl}. The proof of
\lem{L1} is complete.
\end{proof}
For any $l$-periodic set of integers $A\in \ZA_l$ we define
\begin{equation*}
\beta_l(A)=\bigcup_{k\in A}\left[\frac{k}{l},\frac{k+1}{l}\right).
\end{equation*}
It is easy to check that $\beta_l$ determines an isomorphism from
the probability space $(\ZA_l,\lambda )$ to $(\zI_l,\delta )$.
Moreover
\begin{equation*}
\beta((A_{l/n}(t))=I_l(n,t).
\end{equation*}
Thus, the composition of $\alpha\circ\beta_l^{-1}$ where $\alpha $
is from \lem{L1} is an isomorphism from $(\zI_l,\lambda )$ to
$(\ZB_l,\lambda )$. Moreover the following lemma is true.
\begin{lemma}\label{L3}
For any $l\in \ZN$ there exists a one to one mapping
$\tau_l:\ZT\to \ZT^\infty$ such that
\begin{enumerate}
    \item $\tau_l$ is measure preserving, i.e. $|\tau(A)|=|A|$ for any Lebesgue
measurable $A\subset\ZT$,
    \item $\tau_l$ is an isomorphism function between $(\ZT,\zI_l,\lambda)$ and
 $(\ZT^\infty,\ZB_l,\lambda )$
    \item for any $I_l(n,t)$ from \e{form} the set $\gamma_l(I_l(n,t))$
is a rectangle of the form $B_m(i_1,\ldots ,j_d)$ with
$m=\frac{l}{n}$.
\end{enumerate}
\end{lemma}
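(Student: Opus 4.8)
The plan is to realize, at the level of points, the measure--algebra isomorphism $\Phi_l:=\alpha\circ\beta_l^{-1}$ from $(\zI_l,\lambda)$ to $(\ZB_l,\lambda)$ produced by \lem{L1}, which by \e{Phidef} sends the atom $[k/l,(k+1)/l)=\beta_l(A_l(k))$ of $\zI_l$ onto the atom $B_l(\bar k_1,\dots,\bar k_d)$ of $\ZB_l$ (here $k_j=k\mod p_j^{l_j}$ and $\bar k_j$ is its $p_j$-reverse). Once we have a one to one map $\tau_l\colon\ZT\to\ZT^\infty$ whose induced set function $\gamma_l$ agrees with $\Phi_l$ on $\zI_l$ and which carries Lebesgue measure on $\ZT$ to Lebesgue measure on $\ZT^\infty$, conclusions (1) and (2) are exactly these two properties, while (3) follows because $I_l(n,t)=\beta_l(A_{l/n}(t))$ (the identity recorded just before the lemma), so that $\gamma_l(I_l(n,t))=\alpha(A_{l/n}(t))$, which by \e{Phidef} (equivalently \e{GAm}) applied with $l/n$ in place of $l$ equals $B_{l/n}(\bar t_1,\dots,\bar t_d)$, a rectangle of the required form with $m=l/n$.

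To build $\tau_l$, write each $x\in\ZT=[0,1)$ as $lx=t+s$ with $t=\lfloor lx\rfloor\in\{0,1,\dots,l-1\}$ and $s=\{lx\}\in[0,1)$; this is a bijection of $\ZT$ onto $\{0,\dots,l-1\}\times[0,1)$. The integer $t$ records which atom of $\zI_l$ contains $x$; pass it through the Chinese Remainder bijection $t\mapsto(t_1,\dots,t_d)$, $t_j=t\mod p_j^{l_j}$, and then through the digit reversals $t_j\mapsto\bar t_j$, to obtain $(\bar t_1,\dots,\bar t_d)\in\prod_{j=1}^d\{0,\dots,p_j^{l_j}-1\}$. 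Fix once and for all a measure preserving bijection $s\mapsto(\sigma_1(s),\sigma_2(s),\dots)$ of $[0,1)$ onto $[0,1)^{\ZN}$ (the standard interleaving of binary digits). Now set $\tau_l(x)=(y_1,y_2,\dots)\in\ZT^\infty$ with
\[
y_j=\frac{\bar t_j+\sigma_j(s)}{p_j^{l_j}}\quad(1\le j\le d),\qquad y_j=\sigma_j(s)\quad(j>d).
\]
Since $t$ is recovered from $(\bar t_1,\dots,\bar t_d)$ and $s$ from $(\sigma_j(s))_j$, the map $\tau_l$ is one to one.

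For (2): the atom $[k/l,(k+1)/l)$ is precisely $\{x:\lfloor lx\rfloor=k\}$, and from the formula above its $\tau_l$-image is the set of $y$ with $y_j\in[\bar k_j/p_j^{l_j},(\bar k_j+1)/p_j^{l_j})$ for $1\le j\le d$ and $y_j$ unrestricted for $j>d$, i.e.\ exactly the rectangle $B_l(\bar k_1,\dots,\bar k_d)=\Phi_l([k/l,(k+1)/l))$. Hence the induced set function $\gamma_l$ carries the atoms of $\zI_l$ bijectively onto the atoms of $\ZB_l$ and coincides with $\Phi_l$ on all of $\zI_l$; since $\gamma_l$ comes from the injective map $\tau_l$ it respects finite unions and differences, so it is an algebra isomorphism. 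For (1), Lebesgue measure on $\ZT$ is, via $x\mapsto(t,s)$, the product of the uniform measure on $\{0,\dots,l-1\}$ and Lebesgue measure on $[0,1)$, while Lebesgue measure on $\ZT^\infty$ is, in each of the first $d$ coordinates, the product of the uniform measure on the $p_j$-adic blocks $[i/p_j^{l_j},(i+1)/p_j^{l_j})$ and Lebesgue measure inside the chosen block, and is Lebesgue measure in every remaining coordinate; the Chinese Remainder map and the reversals are bijections of finite equiprobable sets, hence measure preserving, and $s\mapsto(\sigma_j(s))_j$ preserves measure by construction, so $\tau_l$ pushes Lebesgue forward to Lebesgue. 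Conclusion (3) was already reduced to \e{Phidef} above.

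The one genuinely delicate point I expect is the bookkeeping needed to make $\tau_l$ an honest bijection of $\ZT$ onto all of $\ZT^\infty$, rather than merely a bijection between co-null sets: one must fix the half-open conventions in \e{form} and \e{Rl} consistently and deal with the (measure zero) set of points having ambiguous $p_j$-adic or binary expansions; this is routine, and with these conventions the identities $\tau_l([k/l,(k+1)/l))=B_l(\bar k_1,\dots,\bar k_d)$ hold exactly, essentially by the computation already carried out in the proof of \lem{L1}. Alternatively one may simply invoke the classical fact that any two non-atomic standard probability spaces are isomorphic mod null sets and note that the explicit $\tau_l$ constructed above is such an isomorphism realizing $\Phi_l$; either route yields the lemma.
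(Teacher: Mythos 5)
Your proposal is correct and follows essentially the same route as the paper: the lemma is obtained there by composing $\beta_l^{-1}$ with the isomorphism $\alpha$ of Lemma~\ref{L1} and realizing this set-algebra isomorphism by a point map (the paper's remark treats the existence of a map with (1)--(2) as trivial), with property (3) coming exactly as you derive it from $I_l(n,t)=\beta_l(A_{l/n}(t))$ and \eqref{Phidef}. Your explicit coordinates for $\tau_l$ (CRT plus digit reversal on the atom index, a measure preserving bijection of $[0,1)$ onto $[0,1)^{\ZN}$ inside each atom) simply make concrete the step the paper leaves implicit, and your acknowledged null-set bookkeeping is at the same level of rigor as the paper itself.
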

\begin{remark}
The existence of a mapping with the conditions (1) and (2) is
trivial. The important part of the lemma is the fact that
$\gamma_l(I_l(n,t))$ is a certain rectangle in $\ZT^\infty$.
\end{remark}
For any set of integers $D\subset \ZN$ we define the maximal
function
\begin{equation}\label{MBl}
\ZM_Dg(x)=\sup_{m\in D:\,x\in B_m(j_1,\ldots ,j_d)}
\frac{1}{|B_m(j_1,\ldots ,j_d)|}\int_{B_m(j_1,\ldots ,j_d)}
|g(t)|dt
\end{equation}
where $g\in L^1(\ZT^\infty)$. We note that if $l$ is a multiple
for the numbers from $D$ then the rectangles in \e{MBl} are in
${\ZB_l}$. This implies that for the conditional expectation
$E^{\ZB_l}g(x)$ of $g(x)$ with respect to the algebra ${\ZB_l}$ we
have
\begin{equation}\label{cexp}
\ZM_Dg(x)=\ZM_DE^{\ZB_l}g(x).
\end{equation}

The following theorem clearly follows from \lem{L3}. It creates an
equivalency between Riemann maximal function $\zR_D^lf(x)$ defined
in \e{ZM} and $\ZM_{l/D}g(x)$, where $l/D$ is defined in \e{l/D}.
\begin{theorem}\label{T4}
For any $l\in \ZN$ there exists a measure preserving mapping
$\tau_l:\ZT\to \ZT^\infty$ such that if $f(x)\in L^1(\ZT)$ and
$g(x)=f(\tau_l^{-1}(x))$ then
\begin{equation*}
|\{x\in \ZT:\,\zR_D^lf(x)>\lambda \}|=|\{x\in
\ZT^\infty:\,\ZM_{l/D}g(x)>\lambda \}|,\quad \lambda>0.
\end{equation*}
\end{theorem}

\begin{corollary}
Let $D$ be a set of indexes and $\Phi:\ZR^+\to\ZR^+$ be an
increasing convex function. Then
\begin{equation}\label{cor}
\sup_{\|f\|_\Phi\le 1}\bigg|\bigg\{x\in \ZT:\, \zR_Df(x)>\lambda
\bigg\}\bigg|=\sup_{B\subset D,\, l\in \ZN,\,\|g\|_\Phi\le
1}\bigg|\bigg\{x\in \ZT^\infty:\, \ZM_{l/B}g(x)>\lambda
\bigg\}\bigg|,
\end{equation}
for any $\lambda >0$, where in $\sup $ finite sets $B$ are
considered.
\end{corollary}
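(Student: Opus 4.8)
The plan is to prove separately the two inequalities between the left-hand side $L(\lambda)$ and the right-hand side $R(\lambda)$ of \e{cor}. Both will be deduced from \trm{T4}, used together with the identities \e{RDMD}, \e{DDl} and \e{cexp}. Two elementary facts enter throughout: (i) a measure preserving bijection preserves the Luxemburg norm, so that if $f=g\circ\tau_l$ with $\tau_l$ as in \trm{T4} then $\int_{\ZT}\Phi(|f|/c)=\int_{\ZT^\infty}\Phi(|g|/c)$ for all $c>0$ and hence $\|f\|_\Phi=\|g\|_\Phi$; (ii) the conditional expectation $E^{\ZB_l}$ is a contraction for $\|\cdot\|_\Phi$, since by the conditional Jensen inequality $\int\Phi(|E^{\ZB_l}g|/c)\le\int E^{\ZB_l}\Phi(|g|/c)=\int\Phi(|g|/c)$. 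I also record that for finite $B\subset D$ one has $l/B\subset l/D$, and that $l$ is a multiple of every member of $l/B$ (because $l/(l/n)=n$), so that \e{cexp} applies to $\ZM_{l/B}$; and that $L^\Phi\subset L^1$ on a probability space, so \trm{T4} is applicable to the functions occurring below.

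To prove $R(\lambda)\le L(\lambda)$, fix a finite $B\subset D$, an $l\in\ZN$ and $g$ on $\ZT^\infty$ with $\|g\|_\Phi\le1$, and set $\tilde g=E^{\ZB_l}g$, so that $\|\tilde g\|_\Phi\le1$ and, by \e{cexp}, $\ZM_{l/B}g=\ZM_{l/B}\tilde g$. With $\tau_l$ the mapping from \trm{T4}, put $f=\tilde g\circ\tau_l$; then $\tilde g=f\circ\tau_l^{-1}$, $\|f\|_\Phi=\|\tilde g\|_\Phi\le1$, and $f$ is $\zI_l$-measurable because $\tilde g$ is $\ZB_l$-measurable and, by \lem{L3}, $\tau_l$ is an isomorphism function between $(\ZT,\zI_l,\lambda)$ and $(\ZT^\infty,\ZB_l,\lambda)$. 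Applying \trm{T4} with the index set $B$, then \e{DDl} (legitimate since $f$ is $\zI_l$-measurable), and finally $B\subset D$,
\begin{equation*}
|\{\ZM_{l/B}g>\lambda\}|=|\{\ZM_{l/B}\tilde g>\lambda\}|=|\{\zR_B^lf>\lambda\}|=|\{\zR_Bf>\lambda\}|\le|\{\zR_Df>\lambda\}|\le L(\lambda),
\end{equation*}
and taking the supremum over $B$, $l$ and $g$ gives $R(\lambda)\le L(\lambda)$.

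For $L(\lambda)\le R(\lambda)$, fix $f$ with $\|f\|_\Phi\le1$ and $\varepsilon>0$, and exhaust $D$ by finite sets $B_1\subset B_2\subset\cdots$. By the first equality in \e{RDMD}, $\zR_{B_k}f\nearrow\zR_Df$ pointwise, hence $|\{\zR_{B_k}f>\lambda\}|\to|\{\zR_Df>\lambda\}|$, so one may fix $B=B_k$ with $|\{\zR_Bf>\lambda\}|>|\{\zR_Df>\lambda\}|-\varepsilon$. By \e{RDMD} applied to the finite set $B$ (equivalently, by \e{limRm} and the fact that $B|l$ forces every $n\in B$ to divide $l$), $\zR_B^lf\to\zR_Bf$ almost everywhere as $l\to\infty$ through the integers with $B|l$; consequently $\{\zR_Bf>\lambda\}$ is contained, up to a null set, in $\liminf_l\{\zR_B^lf>\lambda\}$, and Fatou's lemma gives $\liminf_l|\{\zR_B^lf>\lambda\}|\ge|\{\zR_Bf>\lambda\}|$. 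Fix such an $l$ with $|\{\zR_B^lf>\lambda\}|>|\{\zR_Bf>\lambda\}|-\varepsilon$; with $\tau_l$ from \trm{T4} and $g=f\circ\tau_l^{-1}$ one has $\|g\|_\Phi=\|f\|_\Phi\le1$ and, by \trm{T4},
\begin{equation*}
|\{\ZM_{l/B}g>\lambda\}|=|\{\zR_B^lf>\lambda\}|>|\{\zR_Df>\lambda\}|-2\varepsilon.
\end{equation*}
Since $\varepsilon>0$ was arbitrary, $R(\lambda)\ge|\{\zR_Df>\lambda\}|$, and taking the supremum over $f$ yields $R(\lambda)\ge L(\lambda)$. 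The only genuinely delicate step is the a.e.\ convergence $\zR_B^lf\to\zR_Bf$ (which, via \e{RDMD}/\e{limRm}, rests on Lebesgue's differentiation theorem) and its conversion into the comparison of level-set measures through Fatou's lemma; everything else is bookkeeping around \trm{T4}.
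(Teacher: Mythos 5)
Your proof is correct and follows essentially the same route as the paper: one direction via \e{RDMD} and Theorem~\ref{T4} applied to $g_l=f\circ\tau_l^{-1}$, the other via the conditional expectation trick \e{cexp}, Theorem~\ref{T4}, and \e{DDl} together with $B\subset D$. You merely make explicit two points the paper leaves implicit, namely that $E^{\ZB_l}$ contracts the Orlicz norm and the Fatou-type passage from a.e.\ convergence of $\zR_B^lf$ to the comparison of level-set measures, which is a welcome but not essentially different elaboration.
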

\begin{proof}
Take $f\in L^\Phi(\ZT)$. If $\tau_l$ is the mapping satisfying the
conditions of \trm{T4} then the functions
$g_l(x)=f(\tau_l^{-1}(x))$, $l=1,2,\ldots $, satisfy
\begin{equation*}
|\{x\in \ZT:\, \zR_B^lf(x)>\lambda\}| =\{x\in \ZT^\infty:\,
\ZM_{l/B}g_l(x)>\lambda \},
\end{equation*}
and $\|f\|_\Phi=\|g_l\|_\Phi $ since $\tau_l$ is measure
preserving. Taking into account \e{RDMD} we obtain
\begin{multline*}
\bigg|\bigg\{x\in \ZT:\, \zR_Df(x)>\lambda \bigg\}\bigg|\\
\le \sup_{B\subset D,\, l\in \ZN}\bigg|\bigg\{x\in \ZT:\,
\zR_B^lf(x)>\lambda \bigg\}\bigg| =\sup_{B\subset D,\, l\in
\ZN}\bigg|\bigg\{x\in \ZT^\infty:\, \ZM_{l/B}g_l(x)>\lambda
\bigg\}\bigg|.
\end{multline*}
Since $f\in L^\Phi$ is arbitrary
and $\|f\|_\Phi=\|g_l\|_\Phi $ we get
\begin{equation*}
\sup_{\|f\|_\Phi\le 1}\bigg|\bigg\{x\in \ZT:\, \zR_Df(x)>\lambda
\bigg\}\bigg|\le\sup_{B\subset D,\, l\in \ZN,\,\|g\|_\Phi\le
1}\bigg|\bigg\{x\in \ZT^\infty:\, \ZM_{l/B}g(x)>\lambda
\bigg\}\bigg|.
\end{equation*}
Now suppose $g\in L^\Phi(\ZT^\infty )$, $B\subset D$ is finite and
$l\ge 2$ is arbitrary integer. According to \e{cexp} there exists
$\ZA_l$-measurable function $g_l$ such that
\begin{equation}\label{gf1}
\ZM_{l/B}g(x)=\ZM_{l/B}g_l(x).
\end{equation}
According to \trm{T4} for $f_l(x)= g_l(\tau_l(x))$ we have
\begin{equation}\label{gf2}
|\{x\in \ZT:\, \zR_B^lf_l(x)>\lambda\}|=\{x\in \ZT^\infty:\,
\ZM_{l/B} g_l(x)>\lambda \}|.
\end{equation}
From \e{DDl} we have
\begin{equation*}
\zR_Bf_l(x)=\zR_B^lf_l(x).
\end{equation*}
So, using also \e{gf1}, \e{gf2} and relation $B\subset D$, we get
\begin{multline*}
\{x\in \ZT^\infty:\, \ZM_{l/B}g(x)>\lambda \}| =|\{x\in \ZT:\,
\zR_B^lf_l(x)>\lambda\}|\\
=|\{x\in \ZT:\,
\zR_Bf_l(x)>\lambda\}|\le |\{x\in \ZT:\, \zR_Df_l(x)>\lambda\}|
\end{multline*}
and therefore
\begin{equation*}
\sup_{B\subset D,\, l\in \ZN,\,\|g\|_\Phi\le 1}\bigg|\bigg\{x\in
\ZT^\infty:\, \ZM_{l/D}g(x)>\lambda \bigg\}\bigg|\le
\sup_{\|f\|_\Phi\le 1}\bigg|\bigg\{x\in \ZT:\, \zR_Df(x)>\lambda
\bigg\}\bigg|.
\end{equation*}
\end{proof}
\end{section}

\begin{section}{A covering lemma}

The covering lemma we establish in this section is needed to prove
\trm{T2}. We consider the function
\begin{equation}\label{alp}
\alpha (x)={} \left\{
\begin{array}{lcl}
x^{x-1},&\hbox { if } & x>1,\\
x, &\hbox { if } & 0\le x\le 1.
\end{array}
\right.
\end{equation}
This is an increasing continuous function from $\ZR^+$ to $\ZR^+$.
It is easy to observe its inverse satisfies the condition
\begin{equation}\label{alp1}
\lim_{x\to\infty}\frac{\alpha^{-1}(x)\ln x }{\ln\ln x }=1.
\end{equation}
Define the functions
\begin{equation}\label{PhiPsi}
\Psi(x)=\int_0^{|x|}\alpha(t)dt,\quad\Phi(x)=\int_0^{|x|}\alpha^{-1}(t)dt,\quad
x\in \ZR.
\end{equation}
These are complementary $N$-functions (see definition in
\cite{KrRu}, chap. 1, par. 2 ). Performing simple estimations we
get
\begin{equation}\label{Phi}
\frac{x\ln (x/2)}{2\ln\ln (x/2)}<\Phi(x)< \frac{x\ln x}{\ln\ln
x},\quad x>\gamma,
\end{equation}
where $\gamma $ is an absolute constant. According to the Young's
inequality (\cite{KrRu}, (2.6)) we have
\begin{equation}\label{Hold}
uv\le\Phi(u)+\Psi(v),\quad u>0,v>0.
\end{equation}
Everywhere below we will use notation $a\lesssim b$ for the
inequality $a\le c\cdot b$ with an absolute constant $c>0$. The
following lemma is a variant of the lemma 3 from \cite{Kar}.
\begin{lemma}\label{L2}
If $A_1,A_2,\ldots A_n$ and $A$ are independent sets in some
probability space and $ \sum_{k=1}^n|A_k|\le 1/2$ then
\begin{equation}\label{intAPsi}
\int_E\Psi\left(\frac{1}{3}\cdot\left(1+\sum_{k=1}^n\ZI_{A_k}(x)\right)\right)\lesssim
|E|,
\end{equation}
where
\begin{equation}
E=A\bigcap\left(\bigcup_{k=1}^nA_k\right).
\end{equation}
\end{lemma}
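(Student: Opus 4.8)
The plan is to recast the assertion as a tail estimate for the counting variable $S(x)=\sum_{k=1}^{n}\ZI_{A_k}(x)$, the number of sets among $A_1,\dots,A_n$ that contain $x$. Write $p_k=|A_k|$, $\lambda=\sum_{k=1}^{n}p_k\le 1/2$, and let $P$ be the underlying probability measure (so $|A|=P(A)$). Since $A$ is independent of $A_1,\dots,A_n$, it is independent of $\bigcup_{k}A_k$ and of $S$, so $|E|=|A|\,P\big(\bigcup_{k}A_k\big)=|A|\,P(S\ge 1)$. Moreover $S\ge 1$ on $\bigcup_{k}A_k$, hence splitting $E$ according to the value of $S$ and using the independence of $A$ and $S$ once more,
\[
\int_{E}\Psi\Big(\tfrac13\big(1+S(x)\big)\Big)\,dx=|A|\sum_{j\ge 1}\Psi\big(\tfrac13(1+j)\big)\,P(S=j).
\]
Thus it suffices to prove $\sum_{j\ge 1}\Psi\big(\tfrac13(1+j)\big)P(S=j)\lesssim P(S\ge 1)$, the claim then following after multiplying through by $|A|$.

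Next I would dominate the law of $S$ by that of a Poisson variable of the same mean. Since $S=\sum_k X_k$ is a sum of independent Bernoulli variables, the event $\{S\ge j\}$ is contained in the union over $j$-element index sets of the events $\{X_{i_1}=\cdots=X_{i_j}=1\}$, so $P(S\ge j)\le e_j(p_1,\dots,p_n)\le\frac1{j!}\big(\sum_k p_k\big)^j=\lambda^j/j!$, where $e_j$ is the $j$-th elementary symmetric function and the last step comes from expanding the power; in particular $P(S=j)\le\lambda^j/j!$. Conversely $P(S\ge 1)=1-\prod_k(1-p_k)\ge 1-e^{-\lambda}\ge\lambda-\lambda^2/2\ge\tfrac34\lambda$, using $\lambda\le 1/2$. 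Combining these,
\[
\sum_{j\ge 1}\Psi\big(\tfrac13(1+j)\big)P(S=j)\le\lambda\sum_{j\ge 1}\frac{\lambda^{j-1}}{j!}\,\Psi\big(\tfrac13(1+j)\big)\le\lambda\sum_{j\ge 1}\frac{\Psi\big(\tfrac13(1+j)\big)}{j!},
\]
so the whole lemma reduces to showing $C:=\sum_{j\ge 1}\Psi\big(\tfrac13(1+j)\big)/j!<\infty$; granting this, the left side is $\le C\lambda\le\tfrac43 C\,P(S\ge 1)$, which is the required estimate with absolute constant $\tfrac43 C$.

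The convergence of this last series is the only point where the concrete form of $\Psi$, together with the scaling factor $\tfrac13$, enters, and I expect it to be \emph{the main obstacle}. The function $\Psi$ grows super-exponentially — for $x>1$ its integrand is $\alpha(t)=t^{t-1}$, which yields the crude bound $\Psi(x)=\int_0^x\alpha(t)\,dt\le 2x^x$ for $x\ge 1$ — so one must check that the factorial still beats it. Using $\Psi\big(\tfrac13(1+j)\big)\le 2\big(\tfrac{1+j}{3}\big)^{(1+j)/3}$ and Stirling's bound $j!\ge(j/e)^j$, the logarithm of the $j$-th term is $\tfrac13 j\ln j-j\ln j+O(j)=-\tfrac23 j\ln j+O(j)\to-\infty$, so the series converges super-geometrically and $C$ is a finite absolute constant. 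This works precisely because the argument carries the factor $\tfrac13<1$, which makes $\ln\Psi\big(\tfrac13(1+j)\big)\sim\tfrac13 j\ln j$ negligible against $\ln j!\sim j\ln j$; with a scaling constant $\ge 1$ the series — and the lemma in this form — would fail.
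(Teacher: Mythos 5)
Your proof is correct and rests on essentially the same ingredients as the paper's: the bound $P(S\ge j)\le e_j(p_1,\dots,p_n)\le\big(\sum_k p_k\big)^j/j!$ via elementary symmetric functions, the independence-based lower bound $|E|\gtrsim|A|\sum_k|A_k|$, and the observation that the factor $\tfrac13$ makes $\Psi\big(\tfrac13(1+j)\big)$ lose to $j!$. The only difference is bookkeeping: you sum directly over the level sets $\{S=j\}$ and absorb the growth comparison into one convergent series, whereas the paper first proves the tail estimate $m(\lambda)\lesssim|E|\big(\tfrac{2}{\lambda-1}\big)^{(\lambda-1)/2}$ and then integrates it against $\Psi'(\lambda/3)=\alpha(\lambda/3)$.
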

\begin{proof} To prove \e{intAPsi} it is enough to get
\begin{equation*}
m(\lambda)=\left|\left\{x\in
A:\,1+\sum_{k=1}^n\ZI_{A_k}(x)>\lambda \right\}\right|\lesssim
|E|\cdot\bigg(\frac{2}{\lambda-1}\bigg)^\frac{\lambda-1}{2} ,\quad
\lambda >3.
\end{equation*}
Indeed, using the relation $\Psi'(x)=\alpha (x)$, $x>0$, as a
consequence of \e{PhiPsi}, combined with \e{alp},  we obtain
\begin{align*}
\int_E\Psi\left(\frac{1}{3}\left(1+\sum_{k=1}^n\ZI_{A_k}(x)\right)\right)dx=
\frac{1}{3}\int_0^\infty
\Psi'\left(\frac{\lambda}{3}\right)m(\lambda)d\lambda \\=
\frac{1}{3}
\int_0^\infty\alpha\left(\frac{\lambda}{3}\right)m(\lambda)d\lambda
\lesssim |A|\int_0^\infty \alpha\left(\frac{\lambda}{3}\right)
\bigg(\frac{2}{\lambda}\bigg)^\frac{\lambda}{2}\lesssim |A|.
\end{align*}
Putting $\delta_k =|A_k|$, we have $\sum_{k=1}^n\delta_k<1/2 $.
Then using the independence, we get
\begin{gather*}
|E|=|A\cap A_1|+|A\cap (A_2\setminus A_1)|+\ldots +|A\cap
(A_n\setminus \cup_{k=1}^{n-1}A_k)|
\\
=\delta_1|A|+\delta_2|A|(1-|A_1|)+\ldots
+\delta_n|A|(1-|\cup_{k=1}^{n-1}A_k|)
\\
\ge \delta_1|A|+\frac{\delta_2}{2}|A|+\ldots
+\frac{\delta_n}{2}|A|\ge
\frac{1}{2}|A|(\delta_1+\delta_2+\ldots+\delta_n).
\end{gather*}
We assume $\lambda >3$. Hence
\begin{align*}
m(\lambda )=\\
{}={}&\sum_{k=[\lambda ] }^n\sum_{i_1<\cdots<i_k}\left|A\cap
A_{i_1}\cap \ldots \cap A_{i_k}\cap\left(\bigcap_{j\not\in \{
i_1,\ldots,i_k\}
}(A_j)^c\right)\right|\\
{}={}&\sum_{k=[\lambda ]
}^n\sum_{i_1<\cdots<i_k}|A|\cdot|A_{i_1}|\cdot \ldots \cdot
|A_{i_k}|\prod_{j\not\in \{ i_1,\ldots,i_k\}
}(1-|A_j|)\\
{}={}&|A|\sum_{k=[\lambda
]}^n\sum_{i_1<\cdots<i_k}\delta_{i_1}\cdots
\delta_{i_k}\prod_{j\not\in \{ i_1,\ldots,i_k\} }(1-\delta_j)\\
    {}\le {}&|A|\sum_{k=[\lambda]}^n
\sum_{i_1<\cdots<i_k}\delta_{i_1}\cdots \delta_{i_k} \le
|A|\sum_{k=[\lambda]}^\infty \frac{(\delta_1+\cdots
+\delta_n)^k}{k!}\\
{}<{}&|A|(\delta_1+\cdots +\delta_n)\sum_{k=[\lambda]}^\infty
\frac{1}{k!}\le 2|E|\sum_{k=[\lambda]}^\infty
\frac{1}{\big[\frac{k}{2}\big]!\big(\big[\frac{k}{2}\big]+1\big)\cdots
k}\\
{}\le {}&
2|E|\bigg(\frac{2}{\lambda-1}\bigg)^\frac{\lambda-1}{2}\sum_{k=[\lambda]}^\infty
\frac{1}{\big[\frac{k}{2}\big]!}\lesssim
|E|\bigg(\frac{2}{\lambda-1}\bigg)^\frac{\lambda-1}{2}.
\end{align*}
The proof is complete.
\end{proof}

For a set of indexes $S\subset \ZN $ we denote by $\zR(S)$ the
algebra generated by the rectangles \e{Rl} with $l_i=0,\, i\not\in
S$. For any set $R\subset \zR$ we define its spectrum $\sp (R)$ to
be the smallest set of indexes $S$ for which $R\subset \zR(S)$.
That is
\begin{equation}\label{sp}
\sp (R)=\bigcap_{S:\,R\in \zR(S)}S.
\end{equation}
It is easy to observe
\begin{align}
    &\hbox { if }\sp(B_1),\ldots,\sp(B_k)\hbox { are mutually
    disjoint,
then }\,B_1,\ldots,B_k\hbox { are independent, }\label{sp1}\\
    &\hbox { if } \, \sp(R)\subseteq \sp(Q)\hbox { and } \,Q\not\subseteq R
    \hbox { then }R\cap Q=\varnothing.\label{sp2}
\end{align}
We denote
\begin{align}
&l=l_d=p_1p_2\ldots p_d, \label{lE}\\
&E_d=\{m\in\ZN:\,m=p_\nu p_\mu p_{\mu+1}\ldots p_d,\,
1\le\nu<\mu\}.\label{Ed}
\end{align}
Let $\ZF_d$ be the family of all rectangles from $\ZB_l$ defined
\begin{equation}\label{ZFd}
\ZF_d=\{B_m(j_1,\ldots ,j_d):\,m\in E_d\}
\end{equation}
 According to \e{ZFd} any $B\in\ZF_d$ has the form
\begin{equation}\label{setB}
B=\left\{x\in\ZT^\infty: \, \frac{j_k}{p_k}\le
x_k<\frac{j_k+1}{p_k},\, k\in\{\nu\}\cup\{\mu, \mu+1,\ldots
,d\}\right\},
\end{equation}
where $0\le j_k<p_k$, $k=1,2,\ldots $. In the case $\mu =d+1$ we
understand $\{\mu, \mu+1,\ldots ,d\}=\varnothing $. As $\mu$ and
$\nu$ in \e{setB} are uniquely determined for a given $B\in
\ZF_d$, sometimes  we will use $\mu(B)$, $\nu(B)$ for them. We
define the base $\bs (B)$ and the tail $\tl (B)$ of $B$ by
\begin{equation}\label{base} \bs (B)=
\left\{x\in\ZT^\infty: \, \frac{j_k}{p_k}\le
x_k<\frac{j_k+1}{p_k},\, k\in\{\mu, \mu+1,\ldots ,d\}\right\},
\end{equation}
and
\begin{equation}\label{tail}
 \tl(B)=\left\{x\in\ZT^\infty: \, \frac{j_\nu }{p_\nu }\le
x_\nu<\frac{j_\nu+1}{p_\nu}\right\}.
\end{equation}
Obviously for any $B\in \ZF_d $ we have
\begin{equation}\label{RcapR}
B=\bs(B)\cap\tl(B).
\end{equation}
Observe that if $A,B\in \ZF_d$ then
\begin{align}
&\bs(A)\cap \bs(B)\neq \varnothing & \Rightarrow &\quad
\bs(A)\subseteq \bs(B) \hbox{
or }\bs(B)\subseteq \bs(A), \label{Fpr1}\\
&\bs(A)\subset \bs(B) &\Rightarrow &\quad \mu (A)< \mu
(B),\label{Fpr2}\\
&\bs(A)\subset \bs(B),\quad A\not\subset B &\Rightarrow & \quad
\tl(A)\neq\tl(B).\label{Fpr3}
\end{align}
\begin{lemma}\label{L5} Any collection of rectangles $\Theta=\{A_\alpha\}\subset \ZF_d $ contains a
finite subcollection $\tilde\Theta=\{\tilde A_1,\ldots,\tilde
A_m\}$ with
\begin{align}
&\left|\bigcup_{j=1}^m\tilde A_j\right|\ge \frac{1}{5}\left|\bigcup_\alpha
A_\alpha\right| ,\label{tlBB}\\
    &\int_{\ZT^\infty}\Psi\left(\frac{1}{3}\sum_{j=1}^m\ZI_{\tilde A_j}(x)\right)dx\lesssim
1.\label{tlB}
\end{align}
\end{lemma}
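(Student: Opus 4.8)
The plan is to imitate the Vitali-type selection from the proof of Jessen's theorem above, but tuned so that the chosen subfamily has small \emph{average} multiplicity — controlled by \lem{L2} — instead of being pairwise disjoint, the averaging being carried out one base at a time. First, since $\ZF_d\subset\ZB_{l_d}$ is a finite family I may assume $\Theta$ is finite; write $G=\bigcup_\alpha A_\alpha$. I would order the rectangles of $\Theta$ so that those with a larger base come first — i.e. in non-increasing order of $\mu(\cdot)$, listing rectangles that share a base consecutively — and then run the greedy rule: go down the list and keep $A$ exactly when $A\not\subseteq\bigcup(\text{already kept})$. This gives $\tilde\Theta=\{\tilde A_1,\dots,\tilde A_m\}$ and makes \e{tlBB} automatic, every discarded rectangle being covered by the kept ones.

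Two structural facts should drop out of \e{Fpr1}--\e{Fpr3}: (a) no kept rectangle is contained in another kept one — otherwise the smaller, being processed no earlier, would have been discarded — and hence, by \e{sp2}, any two kept rectangles with comparable spectra are disjoint; (b) at any point $x$ the bases $\bs(\tilde A_j)\ni x$ form a chain $\beta_1\subsetneq\cdots\subsetneq\beta_r$ with $\mu(\beta_1)<\cdots<\mu(\beta_r)$, and among the kept rectangles with a fixed base, those containing $x$ have tails on pairwise distinct coordinates (same base with the same tail-coordinate would already force disjointness).

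Next I would group the kept rectangles by base. For each occurring base $\beta$, write the kept base-$\beta$ rectangles as $\beta\cap C$ with $C\in\mathcal C_\beta$, each $C$ a single-coordinate cylinder of width $1/p_k$ for some $k<\mu(\beta)$, and set $W^\beta_k=\bigcup\{C\in\mathcal C_\beta:\ C\ \text{sits on coordinate}\ k\}$, so the multiplicity coming from $\beta$ at $x\in\beta$ equals $\#\{k:x_k\in W^\beta_k\}$. The decisive point will be that the acceptance rule can be refined so that $\sum_k|W^\beta_k|\le\tfrac12$ for every $\beta$: stop accepting base-$\beta$ rectangles once the accepted tails for $\beta$ fill up half a coordinate's worth of mass, and then check that the union is not lost, i.e. that what the refused rectangles would have added is already covered by that stage because its complement inside $\beta$ has become a union of rectangles of $\ZF_d$ with base strictly larger than $\beta$, already examined earlier. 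Granting this, \lem{L2} applied in $(\ZT^\infty,\lambda)$ with $A=\beta$ and $A_k=W^\beta_k$ — mutually independent and independent of $\beta$, since they live on distinct coordinates all different from those defining $\beta$, and with $\sum_k|A_k|\le\tfrac12$ — gives, with $E_\beta:=\beta\cap\bigcup_kW^\beta_k=\bigcup_{C\in\mathcal C_\beta}(\beta\cap C)$,
\[
\int_{E_\beta}\Psi\!\Big(\tfrac13\big(1+\textstyle\sum_{C\in\mathcal C_\beta}\ZI_{\beta\cap C}\big)\Big)\,d\lambda\ \lesssim\ |E_\beta|.
\]

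Finally I would sum over $\beta$. Since the bases through a point form a chain, a point of $E_\beta$ lying in $E_{\beta'}$ forces $\beta,\beta'$ comparable, so, attributing the newly covered mass to a single base, the $E_\beta$ may be taken pairwise disjoint with $\bigcup_\beta E_\beta=G$; thus $\sum_\beta|E_\beta|=|G|\le1$, and on each $E_\beta$ only the multiplicities from bases contained in $\beta$ survive, so by monotonicity of $\Psi$
\[
\int_{\ZT^\infty}\Psi\!\Big(\tfrac13\textstyle\sum_{j=1}^m\ZI_{\tilde A_j}\Big)\,d\lambda\ \le\ \sum_\beta\int_{E_\beta}\Psi\!\Big(\tfrac13\big(1+\textstyle\sum_{C\in\mathcal C_\beta}\ZI_{\beta\cap C}\big)\Big)\,d\lambda\ \lesssim\ \sum_\beta|E_\beta|\ \le\ 1,
\]
which is \e{tlB}. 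The hard part will be exactly the two items just flagged: showing the cut-off ``$\sum_k|W^\beta_k|\le\tfrac12$'' does not destroy the covering property, and the bookkeeping in the last display — that the $E_\beta$ genuinely tile $G$ and that passing up the chain of bases does not let multiplicities pile up beyond one application of \lem{L2} per base. Everything else should be routine manipulation with \e{Fpr1}--\e{Fpr3}, \e{sp1}--\e{sp2} and the single estimate \lem{L2}.
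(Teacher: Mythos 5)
The decisive refinement in your plan --- the \emph{per-base} cutoff $\sum_k|W^\beta_k|\le\frac12$ followed by ``one application of \lem{L2} per base'' --- is exactly where the argument breaks, and it is not repairable by bookkeeping. The quantity that must be small before \lem{L2} can be invoked is the tail mass accumulated along the whole chain of bases containing the current one, not the mass attached to a single base: at a point $x\in E_\beta$ the multiplicity $\sum_j\ZI_{\tilde A_j}(x)$ also counts every kept rectangle whose base strictly contains $\beta$ and whose tail happens to contain $x$, so your key inequality $\sum_j\ZI_{\tilde A_j}\le 1+\sum_{C\in\mathcal C_\beta}\ZI_{\beta\cap C}$ on $E_\beta$ is false, and the ``$+1$'' cannot absorb a whole chain. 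Concretely, take $\Theta$ to consist, for each level $\mu=d,d-1,\ldots$ and each base at that level, of a batch of rectangles whose tails sit on a block of coordinates used by no other level and have total mass about $\frac12$ (arbitrarily many such disjoint blocks exist since $\sum_k 1/p_k$ diverges). No rectangle is contained in the union of the others (by independence the other tails cover a fraction $<1$ of it), so your greedy rule keeps the entire family and your cutoff never triggers; but then $\sum_j\ZI_{\tilde A_j}$ dominates a sum of arbitrarily many independent indicators with probabilities bounded below, and since $\Psi$ grows superexponentially the integral in \e{tlB} tends to infinity with the number of levels --- no absolute constant can come out. This is precisely why the paper's selection rule \e{Ak2} aggregates the tails of \emph{all} previously chosen rectangles whose base contains the candidate's base (threshold $\frac34$), which gives the chain-uniform bound \e{us}; the final estimate is then run on the pairwise disjoint pieces $B_k=\bs(\tilde A_k)\setminus\bigcup_{\bs(\tilde A_i)\subset\bs(\tilde A_k)}\bs(\tilde A_i)$ (your sets $E_\beta$ are neither disjoint nor a tiling of $G$), on which only tails of bases containing $\bs(\tilde A_k)$ survive; these are grouped by coordinate into the sets $C_{\nu_i}$ and handled by a telescoping application of \lem{L2}, legitimate because the \emph{total} mass over the whole chain is bounded by \e{us}.

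Your other flagged item also fails as hoped: once any cutoff refuses a rectangle, exact covering is lost --- a refused $\beta\cap C$ with a fresh tail coordinate is not covered by the kept base-$\beta$ rectangles, and the earlier, larger-based ones cover only a proper fraction of it. The paper does not rescue set equality either: with rule \e{Ak2} a refused rectangle has its base inside some base $U$ whose accumulated tail mass is at least $\frac14$, independence gives $|U\cap\bigcup_j\tilde A_j|\ge\frac14|U|$, and the proof only concludes $|\bigcup_\alpha A_\alpha|\le 5\,|\bigcup_j\tilde A_j|$ --- a measure comparison rather than the set identity \e{tlBB}, which is all that is used in \lem{L6}. So both points you label ``the hard part'' are genuine gaps; closing them requires changing the selection rule to the chain-cumulative one and weakening the covering claim to a measure inequality, not routine manipulation of \e{Fpr1}--\e{Fpr3} and \e{sp1}--\e{sp2}.
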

\begin{proof}
Since $\ZF_d$ is finite and $\tilde \Theta \subset \ZF_d$ we can
assume $\Theta=\{A_1,A_2,\ldots ,A_n \}$ and $\mu (A_i)\ge \mu
(A_{i+1})$ for any $i$. The subcollection $\tilde \Theta $ will be
chosen from $\{A_1,A_2,\ldots ,A_n\}$ as follows. We choose
$\tilde A_1=A_1$. If the sets $\tilde A_1=A_{l_1},\ldots,\tilde
A_k=A_{l_{k-1}}$ with $l_1<\ldots <l_{k-1}$ have been chosen then
we select $\tilde A_k$ to be the first set among
$A_{l_{k-1}+1},\ldots ,A_n$ satisfying the conditions
\begin{align}
\tilde A_k\not\subset \tilde A_1\cup\ldots\cup \tilde
A_{k-1},\label{Ak1}\\
\left|\bigcup_{j\le k,\tl(\tilde A_j)\cap\bs(\tilde
A_k)\neq\varnothing,\, \bs(\tilde A_j)\supseteq \bs(\tilde
A_k)}\tl(\tilde A_j)\right|<\frac{3}{4}.\label{Ak2}
\end{align}
This process generates a sequence $\tilde A_1,\tilde A_2,\ldots
,\tilde A_m$. According to \e{Ak2}, for any fixed $k$ we have
\begin{equation}\label{us}
\left|\bigcup_{1\le j\le m,\,\tl(\tilde A_j)\cap\bs(\tilde
A_k)\neq\varnothing,\,\bs(\tilde A_j)\supseteq \bs(\tilde
A_k)}\tl(\tilde A_j)\right|<\frac{3}{4}.
\end{equation}
We consider a base $U=\bs(\tilde A_k)$ satisfying the inequality
\begin{equation}\label{e14}
\left|\bigcup_{\tl(\tilde A_j)\cap U\neq\varnothing,\,\bs(\tilde
A_j)\supseteq U}\tl(\tilde A_j)\right|\ge \frac{1}{4}.
\end{equation}
It is easy to observe that from
\begin{equation*}
\tl(\tilde A_j)\cap U\neq\varnothing,\quad\bs(\tilde A_j)\supseteq
U,
\end{equation*}
it follows that $\nu(\tilde A_j)<\mu(\tilde A_k)$. Therefore the
sets
\begin{equation*}
U, \quad \bigcup_{\bs(\tilde A_j)\supseteq U }\tl(\tilde A_j)
\end{equation*}
have disjoint spectrums and so they are independent according to
\e{sp1}. Thus, using \e{e14} we conclude
\begin{multline}\label{14U}
\left|U\bigcap\left(\bigcup_{j=1}^m\tilde A_j\right)\right|\ge
\left|U\bigcap\left(\bigcup_{\bs(\tilde
A_j)\supseteq U }\tilde A_j\right)\right|\\
= \left|U\bigcap\left(\bigcup_{\bs(\tilde A_j)\supseteq U
}\tl(\tilde A_j)\right)\right|= |U|\cdot\left|\bigcup_{\bs(\tilde
A_j)\supseteq U }\tl(\tilde A_j)\right|\ge \frac{1}{4}|U|.
\end{multline}
We denote by $U_1,U_2,\ldots ,U_\gamma $ the family of all
maximal bases $U=\bs(\tilde A_k)$ satisfying \e{e14}. It is
clear they are mutually disjoint their union contains all $U$ satisfying \e{e14}. Thus, using \e{14U} we get
\begin{equation}\label{UAt}
\left|\bigcup_{i=1}^\gamma U_i\right|\le
4\left|\bigcup_{j=1}^m\tilde A_j\right|.
\end{equation}
Now suppose $A_t$ is an arbitrary set which is not in the
subcollection $\{\tilde A_k\}$. We have $l_{k-1}< t<l_k$ for some
$k$. According to the process of the selection we have either
\begin{equation}\label{Atin}
A_t\subset \bigcup_{i=1}^{k-1}\tilde A_i
\end{equation}
or
\begin{equation*}
\left|\tl(\tilde A_t)\bigcup\left(\bigcup_{j< k,\tl(\tilde
A_j)\cap\bs(\tilde A_t)\neq\varnothing,\, \bs(\tilde A_j)\supseteq
\bs(\tilde A_t)}\tl(\tilde A_j)\right)\right|\ge\frac{3}{4}.
\end{equation*}
Since $\tl(\tilde A_t)\le \frac{1}{2}$ we obtain
\begin{equation*}
\left|\bigcup_{j< k,\tl(\tilde A_j)\cap\bs(\tilde
A_t)\neq\varnothing,\, \bs(\tilde A_j)\supseteq \bs(\tilde
A_t)}\tl(\tilde A_j)\right|\ge\frac{1}{4},
\end{equation*}
which means $\bs(A_t)\subseteq U=\bs(\tilde A_{k-1})$ where $U$
satisfies \e{e14}. Hence we have either \e{Atin} or
\begin{equation*}
A_t\subset\cup_{i=1}^\gamma U_i,
\end{equation*}
and therefore, applying \e{UAt}, we get
\begin{equation*}
\left|\bigcup_tA_t\right|\le \left|\bigcup_{j=1}^m\tilde
A_j\right|+\left|\bigcup_{i=1}^\gamma U_i\right|\le
5\left|\bigcup_{j=1}^m\tilde A_j\right|.
\end{equation*}
which gives \e{tlBB}. To prove \e{tlB} denote
\begin{equation}\label{tilA}
B_k=\bs(\tilde A_k)\setminus \left(\bigcup_{\bs(\tilde A_i)\subset
\bs(\tilde A_k)} \bs(\tilde A_i)\right),\quad k=1,2,\ldots ,m.
\end{equation}
It is clear $B_1,B_2,\cdots , B_m$ are pairwise disjoint. We note
some of this sets can be empty. Using \e{RcapR} we have
\begin{equation*}
\bigcup_{k=1}^m B_k =\bigcup_{k=1}^m \bs(\tilde A_k)
\supset\bigcup_{k=1}^m \tilde A_k.
\end{equation*}
Thus, to obtain \e{tlB}, it is enough to prove
\begin{equation}\label{I1}
I_k=\int_{B_k}\Psi\left(\frac{1}{3}\sum_{j=1}^m\ZI_{\tilde
A_j}(x)\right)dx\lesssim |B_k|.
\end{equation}
Observe that
\begin{equation}\label{I2}
I_k=\int_{B_k}\Psi\left(\frac{1}{3}\sum_{j:\,\bs(\tilde A_j)\supseteq
\bs(\tilde A_k)}\ZI_{\tilde A_j}(x)\right)dx.
\end{equation}
Indeed, according to \e{Fpr1}, any $\tilde A_j$ satisfies one of
the relations
\begin{align}
&\bs(\tilde A_j)\cap \bs(\tilde A_k)=\varnothing ,\label{C1}\\
&\bs(\tilde A_j)\subset \bs(\tilde A_k),\label{C2}\\
&\bs(\tilde A_j)\supseteq \bs(\tilde A_k).\label{C3}
\end{align}
 In the case \e{C1} or \e{C2}, using \e{tilA}, we have $\tilde A_j\cap
B_k=\varnothing$. So the integral \e{I1} depends only on the sets
$\tilde A_j$ with \e{C3}, which implies \e{I2}. If $\bs(\tilde
A_j)\supseteq\bs(\tilde A_k)$ then by \e{tilA} $\bs(\tilde
A_j)\supseteq B_k$. Thus, such that $\tilde A_j=\bs(\tilde
A_j)\cap\tl(\tilde A_j)$ (see \e{RcapR}) from \e{I2} we get
\begin{equation*}
I_k=\int_{B_k}\Psi\left(\frac{1}{3}\sum_{\bs(\tilde A_j)\supseteq
\bs(\tilde A_k)}\ZI_{\tl(\tilde A_j)}(x)\right)dx.
\end{equation*}
Now denote
\begin{align}\label{Cnu}
C_\nu=\bigcup_{j:\,\nu(\tilde A_j)=\nu,\,\tl(\tilde
A_j)\cap\bs(\tilde A_k)\neq\varnothing,\, \bs(\tilde A_j)\supseteq
\bs(\tilde A_k) }\tl(\tilde A_j)
\end{align}
and consider all nonempty sets
$C_{\nu_1},C_{\nu_2},\ldots,C_{\nu_p}$, with decreasing numbering
$\nu_1>\nu_2>\ldots>\nu_p$. From \e{us} it follows that
\begin{equation}\label{Cnu34}
\left|\bigcup_{i=1}^p C_{\nu_i}\right|<\frac{3}{4}.
\end{equation}
Observe that if the sets $\tilde A_j$ and $\tilde A_i$ satisfy the
relations
\begin{equation}\label{numu}
\bs(\tilde A_j)\supseteq \bs(\tilde A_i)\hbox { and }\nu(\tilde
A_j)\ge \mu(\tilde A_i)
\end{equation}
then
\begin{equation}\label{AjAi}
\tl(\tilde A_j)\cap  \bs(\tilde A_i)=\varnothing .
\end{equation}
Indeed, from \e{numu} and the definition of the set $\ZF_d$ in
\e{ZFd} it follows that
\begin{equation*}
\sp(\tilde A_j)\subseteq \{\nu(\tilde A_j),\nu(\tilde
A_j)+1,\ldots d\}\subseteq \{\mu(\tilde A_i),\mu(\tilde
A_i)+1,\ldots d\}=\sp(\bs(\tilde A_i)).
\end{equation*}
Thus, using \e{sp2} we will have either $\tilde
A_j\supseteq\bs(\tilde A_i)\supset \tilde A_i$ or $\tilde
A_j\cap\bs(\tilde A_i)=\varnothing $. The first inclusion is not
possible because of \e{Ak1}. So we have $\tilde A_j\cap\bs(\tilde
A_i)=\varnothing $. Therefore, since $\tilde A_j= \bs(\tilde
A_j)\cap\tl(\tilde A_j)$ and $\bs(\tilde A_j)\supseteq \bs(\tilde
A_i)$ (see \e{numu}) we get \e{AjAi}. Combining \e{AjAi} with
\e{Cnu} we get
 \begin{equation*}
C_{\nu_j}\cap  \bs(\tilde A_i)=\varnothing,
\end{equation*}
provided
\begin{equation*}
 \bs(\tilde A_k)\supseteq\bs(\tilde A_i),\quad
\mu(\tilde A_i)\le\nu_j.
 \end{equation*}
Therefore by \e{tilA}
\begin{multline*}
B_k\cap (C_{\nu_j}\setminus
\cup_{s=1}^{j-1}C_{\nu_s})\\
=\left(\bs(\tilde A_k)\setminus
\bigcup_{\bs(\tilde A_i)\subset \bs(\tilde A_k),\,\mu(\tilde A_i)>
\nu_j } \bs(\tilde A_i)\right)\cap (C_{\nu_j}\setminus
\cup_{s=1}^{j-1}C_{\nu_s}).
\end{multline*}
Since $\sp(C_{\nu_s})=\nu_s$, $\nu_p<\nu_{p-1}<\ldots <\nu_1$ and
$\sp(\bs(\tilde A_i))= \{\mu(\tilde A_i),\mu(\tilde
A_i)+1,\ldots,d\}$ (see \e{base}), each set on the right has
spectrum in $\{\nu_j,\nu_j+1,\ldots,d\}$. So we have
\begin{equation*}
\sp \big(B_k\cap (C_{\nu_j}\setminus
\cup_{s=1}^{j-1}C_{\nu_s})\big)\subset\{\nu_j,\nu_j+1,\ldots,d\}.
\end{equation*}
Hence the sets
\begin{equation*}
B_k\cap (C_{\nu_i}\setminus
\cup_{s=1}^{i-1}C_{\nu_s}),C_{\nu_{i+1}},\ldots , C_{\nu_p}
\end{equation*}
have mutually disjoint spectrums, so they are independent by
\e{sp1}. According to \e{Cnu34} these sets satisfy the hypothesis
of \lem{L2}. Hence, applying \e{intAPsi}, we get
\begin{equation*}
\int_{B_k\cap(C_{\nu_i}\setminus
\cup_{s=1}^{i-1}C_{\nu_s})}\Psi\left(\frac{1}{3}\left(1+\sum_{t=i+1}^p\ZI_{C_{\nu_t}}(x)\right)
\right)dx\lesssim |B_k\cap(C_{\nu_i}\setminus
\cup_{s=1}^{i-1}C_{\nu_s})|
\end{equation*}
and therefore
\begin{multline*}
I_k=\int_{B_k}\Psi\left(\frac{1}{3}\sum_{i=1}^p\ZI_{C_{\nu_i}}(x)\right)dx\\
=\sum_{i=1}^p \int_{B_k\cap(C_{\nu_i}\setminus
\cup_{s=1}^{i-1}C_{\nu_s})}\Psi\left(\frac{1}{3}\left(1+\sum_{t=i+1}^p\ZI_{C_{\nu_t}}(x)
\right)\right)dx\\
\lesssim
\sum_{i=1}^p|B_k\cap(C_{\nu_i}\setminus\cup_{s=1}^{i-1}C_{\nu_s})|\le
|B_k|,
\end{multline*}
where $C_{\nu_0}=\varnothing $. Hence the inequality \e{I1} and so
the lemma is proved.
\end{proof}
In the following lemma $E\subset Z$ is the set defined in \e{E1}
and $\ZM_{l/E}f(x)$ is the maximal function from \e{MBl}.
\begin{lemma}\label{L6}
If $\Phi(t)$ is the function from \e{PhiPsi} then
\begin{equation}\label{lem6}
|\{x\in \ZT^\infty:\,\ZM_{l/E} f(x)>\lambda \}|\lesssim
\frac{1}{\lambda}\left(1+\int_{\ZT^\infty}\Phi(f(t))dt\right),\quad
\lambda >0,
\end{equation}
for any $f\in L^\Phi(\ZT^\infty)$ and $l\in\ZN$.
\end{lemma}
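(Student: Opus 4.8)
The plan is to reduce \e{lem6} to the case $l=l_d=p_1p_2\cdots p_d$, where the rectangles entering the maximal function all lie in $\ZF_d$, and then to run the covering lemma \lem{L5} against Young's inequality \e{Hold}.

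For the reduction, recall that every member of $E$ is squarefree, so if $n\in l/E$ then $n=l/s$ with $s\in E$, $s\mid l$, and comparing $p_i$-adic valuations one sees that for each $i$ the rectangle $B_n$ either fixes all $p_i$-adic digits of $x_i$ or fixes all of them except the last one. Freezing the non-last digits in every coordinate partitions $\ZT^\infty$ into the rectangles $B_{l/\mathrm{rad}(l)}(j_1,\dots,j_d)$; inside one such "slice", under the measure-rescaling identification of the slice with $\ZT^\infty$ sending the last $p_i$-adic digit of $x_i$ to the $i$-th coordinate, the family $\{B_n:\,n\in l/E\}$ becomes a subcollection of $\ZF_d$ (see \e{Ed}, \e{ZFd}) together with the single rectangle equal to the whole slice. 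The averages over those "whole slice" rectangles define a weak-$(1,1)$ operator whose level set at height $\lambda$ has measure at most $\tfrac1\lambda\int_{\ZT^\infty}|f|\lesssim\tfrac1\lambda\bigl(1+\int_{\ZT^\infty}\Phi(f)\bigr)$, by \e{Hold} with $v=1$; summing the $\ZF_d$-parts over the slices then reduces \e{lem6} to the case $l=l_d$, i.e. to $\ZM_{l_d/E}=\ZM_{E_d}$ with rectangles from $\ZF_d$. (When the primes dividing $l$ are not an initial segment the same discussion applies, only with a smaller index set.)

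So let $l=l_d$, fix $\lambda>0$, and for each $x$ with $\ZM_{E_d}f(x)>\lambda$ choose $B\in\ZF_d$ with $x\in B$ and $|B|^{-1}\int_B|f|>\lambda$; let $\Theta\subset\ZF_d$ be the collection of all these rectangles, so $\{\ZM_{E_d}f>\lambda\}=\bigcup_{B\in\Theta}B$. By \lem{L5} there is a finite $\tilde\Theta=\{\tilde A_1,\dots,\tilde A_m\}\subset\Theta$ with $\bigcup_j\tilde A_j=\bigcup_{B\in\Theta}B$ and $\int_{\ZT^\infty}\Psi\bigl(\tfrac13\sum_j\ZI_{\tilde A_j}\bigr)\lesssim1$. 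Since $|\tilde A_j|<\lambda^{-1}\int_{\tilde A_j}|f|$ for each $j$, subadditivity gives
\begin{equation*}
\bigl|\{\ZM_{E_d}f>\lambda\}\bigr|\le\sum_{j=1}^m|\tilde A_j|<\frac1\lambda\int_{\ZT^\infty}\Bigl(\sum_{j=1}^m\ZI_{\tilde A_j}(x)\Bigr)|f(x)|\,dx,
\end{equation*}
and \e{Hold} with $u=|f(x)|$, $v=\tfrac13\sum_j\ZI_{\tilde A_j}(x)$ yields
\begin{equation*}
\Bigl(\sum_{j=1}^m\ZI_{\tilde A_j}(x)\Bigr)|f(x)|\le3\Phi(|f(x)|)+3\Psi\Bigl(\tfrac13\sum_{j=1}^m\ZI_{\tilde A_j}(x)\Bigr).
\end{equation*}
Integrating this and using $\int_{\ZT^\infty}\Psi\bigl(\tfrac13\sum_j\ZI_{\tilde A_j}\bigr)\lesssim1$ gives $\int_{\ZT^\infty}(\sum_j\ZI_{\tilde A_j})|f|\lesssim1+\int_{\ZT^\infty}\Phi(f)$, which is \e{lem6}.

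The computation in the last displayed chain is immediate once \lem{L5} is available; the one place that requires care is the $p$-adic bookkeeping of the first paragraph, which for an arbitrary modulus $l$ identifies the rectangles entering $\ZM_{l/E}$ with (a piece of) the family $\ZF_d$ on which \lem{L5} is stated — that is the main obstacle. If convenient one may first replace $f$ by the conditional expectation $E^{\ZB_l}f$ via \e{cexp} (which does not enlarge $\int\Phi(f)$, by Jensen's inequality), but this is not needed above.
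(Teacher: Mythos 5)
Your proof is correct and its core is exactly the paper's own argument: cover the level set $\{\ZM_{l/E}f>\lambda\}$ by rectangles of $\ZF_d$ with average exceeding $\lambda$, extract the subfamily given by \lem{L5}, and play \e{tlB} against Young's inequality \e{Hold} to obtain \e{lem6}. The only divergence is your first paragraph: the paper's proof simply takes $l$ with the factorization \e{lE} (so that $l/E=E_d$ and the rectangles are literally $\ZF_d$), whereas you add a slicing reduction for general $l$; that reduction is sound, provided you note that after summing over slices the constant terms contribute $\sum_S|S|=1$ (absorbed by the ``$1+$'' in \e{lem6}), and that the rescaled family inside a slice is an isomorphic copy of a subfamily of $\ZF_d$ --- possibly with different primes attached to the coordinates when the primes dividing $l$ are not an initial segment --- to which the proof of \lem{L5} applies verbatim, since it only uses the base/tail structure, independence of disjoint spectra, and $|\tl(B)|\le 1/2$.
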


\begin{proof}
We suppose $l$ has the factorization \e{lE}. From \e{Ed} and
\e{E1} we get $l/E=E_d$. So taking into account \e{ZFd} we have
\begin{equation*}
 \ZM_{l/E}f(x)=\sup_{F\in \ZF_d:\,F\ni x }\frac{1}{|F|}\int_F
|f(t)|dt,\quad x\in \ZT^\infty,\quad f\in L^1\big(\ZT^\infty\big).
\end{equation*}
Hence, for any $\lambda >0$ there exists a collection $F=\{F_k \}$
from $\ZF_d $ such that
\begin{align*}
&\{x\in \ZT^\infty:\,\ZM_{l/E} f(x)>\lambda \}=\bigcup_k F_k, \\
&\frac{1}{|F_k |}\int_{F_k }f(t)dt>\lambda .
\end{align*}
According to \lem{L5} we can choose a subfamily $\{\tilde F_k\}$
such that
\begin{gather}
\left|\bigcup_k \tilde F_k\right|\ge \frac{1}{5}\left|\bigcup_k
F_k\right|,\label{tilAk1}\\
\int_{\ZT^\infty}\Psi\left(\sum_k\ZI_{\tilde
F_k}(x)\right)dx\lesssim 1.\label{tilAk2}
\end{gather}
Thus, applying \e{tilAk1},\e{tilAk2} and \e{Hold} we obtain
\begin{multline*}
|\{x\in \ZT^\infty:\,\ZM_{l/E} f(x)>\lambda \}|\label{MQest}\\
= \left|\bigcup_k F_k\right|\le 5\sum_k|\tilde F_k | \le
\sum_k\frac{5}{\lambda }\int_{\tilde F_k}f(t)dt =\frac{5}{\lambda
}\int_{\ZT^\infty} f(t)\sum_k\ZI_{\tilde F_k}(t)dt\\\le
\frac{5}{\lambda } \left(\int_{\ZT^\infty}\Phi(f(x))dx+
\int_{\ZT^\infty}\Psi\bigg(\sum_k\ZI_{\tilde
F_k}(x)\bigg)dx\right) \lesssim
\frac{5}{\lambda}\left(1+\int_{\ZT^\infty}\Phi (f(t))dt\right).
\end{multline*}

\end{proof}
\end{section}

\begin{section}{Proofs of Theorems}
To avoid of the repetition of the same standard argument in the
proofs of the theorems we will use E. M. Stein's well-known weak
type maximal functions principle (see. \cite{Ste1} or \cite{Ste2}
chap. X, par. 3.6). Consider a sequence of convolution operators
\begin{equation*}
T_j=f\ast \mu_j:L^1(\ZT)\to \{\hbox {measurable functions on \ZR}
\}
\end{equation*}
where $\mu_j$ are positive finite measures on $\ZT$.
\begin{lemma}[E. M. Stein]
Let $\Phi:\ZR^+\to\ZR^+$ to be an increasing convex function such
that $\Phi(\sqrt{x})$ is concave. Then if for every $f\in \Phi(L)$
\begin{equation*}
Mf(x)=\sup_j|T_jf(x)|<\infty
\end{equation*}
on a set of positive measure then
\begin{equation*}
|\{x\in \ZR: Mf(x)>\lambda \}|\le
\int_\ZR\Phi\left(\frac{c|f|}{\lambda}\right),\quad \lambda>0,
\end{equation*}
where $c>0$ is a constant.
\end{lemma}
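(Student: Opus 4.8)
The plan is to carry out the randomization argument of E.~M.~Stein. It has two layers: a soft layer — Banach's continuity principle — turning the hypothesis ``$Mf<\infty$ a.e.'' into a uniform but non-quantitative bound, and a transference layer — Rademacher averages combined with the translation invariance of the convolution operators $T_j$ — that upgrades this to the stated modular inequality. The two assumptions on $\Phi$ enter separately: convexity of $\Phi$ is used to rescale a function into a prescribed modular ball, and concavity of $\Phi(\sqrt{\cdot})$ for the Khintchine-type estimate controlling the modular of a randomized sum. First record the elementary facts: each $T_jf=f\ast\mu_j$ commutes with the translations $\tau_t$ of $\ZT$, so $M(\tau_tf)=\tau_t(Mf)$ and translating $f$ merely translates the superlevel set $E_\lambda(f):=\{x:\,Mf(x)>\lambda\}$; and $M(cf)=|c|\,Mf$ for constants $c$. \emph{Step~1 (Banach's principle).} Granting $Mf<\infty$ a.e.\ for every $f\in L^\Phi(\ZT)$, the Baire category argument applied to the sublinear operator $M$ on the Banach space $L^\Phi(\ZT)$ produces a function $C(\mu)$ with $C(\mu)\to0$ as $\mu\to\infty$ and $|\{x:\,Mf(x)>\mu\}|\le C(\mu)$ whenever $\int_\ZT\Phi(|f|)\le1$.

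\emph{Step~2 (transference).} Suppose, for a contradiction, that $|\{Mf>\lambda\}|>\int_\ZT\Phi(c|f|/\lambda)$ for some $f\in L^\Phi$ and $\lambda>0$, the absolute constant $c$ to be fixed later. Writing $f':=cf/\lambda$ turns this into $\delta:=|E_c(f')|>m:=\int_\ZT\Phi(|f'|)$. Since $\ZT$ has mass $1$ and $E_c(f')$ has measure $\delta$, a greedy selection produces $N\le1/\delta$ parameters $t_1,\dots,t_N$ with $|\Omega|\ge1/4$, where $\Omega:=\bigcup_{i=1}^N(t_i+E_c(f'))$. For $\varepsilon\in\{-1,1\}^N$ put $F_\varepsilon:=\sum_{i=1}^N\varepsilon_i\,\tau_{t_i}f'$. \emph{Modular control:} writing $\Phi(u)=\Psi(u^2)$ with $\Psi(v):=\Phi(\sqrt v)$ concave and $\Psi(0)=0$, Jensen's inequality followed by the subadditivity of $\Psi$ gives $\mathbb{E}_\varepsilon\Phi(|F_\varepsilon(x)|)=\mathbb{E}_\varepsilon\Psi(|F_\varepsilon(x)|^2)\le\Psi(\sum_i|f'(x+t_i)|^2)\le\sum_i\Psi(|f'(x+t_i)|^2)=\sum_i\Phi(|f'(x+t_i)|)$, hence $\mathbb{E}_\varepsilon\int_\ZT\Phi(|F_\varepsilon|)\le Nm<1$. \emph{Largeness:} for $x\in\Omega$ pick $i=i(x)$ and $j=j(x)$ with $|T_j(\tau_{t_i}f')(x)|>c$ and write $T_jF_\varepsilon(x)=\varepsilon_ia+Y$, where $a:=T_j(\tau_{t_i}f')(x)$, $|a|>c$, and $Y$ depends only on the remaining signs; from $|a+Y|+|{-a}+Y|\ge2|a|>2c$ one of $\pm a+Y$ has modulus $>c$, so $\mathbb{P}_\varepsilon(MF_\varepsilon(x)>c)\ge\mathbb{P}_\varepsilon(|T_jF_\varepsilon(x)|>c)\ge1/2$, and integrating over $\Omega$ yields $\mathbb{E}_\varepsilon|\{MF_\varepsilon>c\}|\ge\frac12|\Omega|\ge1/8$.

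\emph{Step~3 (conclusion).} Markov applied to the modular bound, together with the trivial bound $|\{MF_\varepsilon>c\}|\le1$ applied to the largeness bound, select by a union bound a single sign vector $\varepsilon^0$ with $\int_\ZT\Phi(|F_{\varepsilon^0}|)\le K_0$ and $|\{MF_{\varepsilon^0}>c\}|\ge\kappa_0$ for absolute constants $K_0,\kappa_0>0$. Put $G:=F_{\varepsilon^0}/K_0$: convexity of $\Phi$ gives $\int_\ZT\Phi(|G|)\le1$, and homogeneity gives $|\{MG>c/K_0\}|\ge\kappa_0$; Step~1 then forces $\kappa_0\le C(c/K_0)$, which fails once $c$ is a large enough absolute constant since $C(\mu)\to0$. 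This contradiction proves $|\{Mf>\lambda\}|\le\int_\ZT\Phi(c|f|/\lambda)$ for all $f\in L^\Phi$ and $\lambda>0$; for unbounded $f$ one argues first for bounded $f$ — where $F_\varepsilon$ trivially has finite modular — and passes to the general case by truncation and Fatou's lemma.

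\emph{Where the difficulty lies.} The substance is Step~2, and within it the tension between two incompatible-looking demands on the single function $F_{\varepsilon^0}$: covering $\Omega$ up to measure $\gtrsim1$ forces the number of translates $N$ to be of order $1/\delta$, yet $\int\Phi(|F_{\varepsilon^0}|)$ must stay bounded, which needs $Nm\lesssim1$, so the amplification $N\approx1/\delta$ has to be exactly absorbed by the deficit $m<\delta$. That this closes with only the \emph{linear} loss $N$, rather than a power $N^{\theta}$ with $\theta>1$, is precisely what concavity of $\Phi(\sqrt{\cdot})$ buys through the Jensen/Khintchine step; with $\Phi$ merely convex the estimate degrades and the scheme collapses. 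The largeness bound, by contrast, is unexpectedly cheap: no second-moment estimate is needed, since the triangle inequality alone lets one choose a single Rademacher sign pushing $T_{j(x)}F_\varepsilon(x)$ past $c$ at each $x\in\Omega$. The remaining ingredients — Step~1 (a routine invocation of Banach's principle, which is where the a.e.-finiteness hypothesis is consumed) and the probabilistic bookkeeping of Step~3 — are soft.
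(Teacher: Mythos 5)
You are proving a statement the paper itself does not prove: the lemma is quoted from Stein \cite{Ste1} (see also \cite{Ste2}, p.~458), so the only meaningful comparison is with Stein's original randomization argument, and your Steps~2--3 reproduce it faithfully. The Rademacher sum $F_\varepsilon$, the Jensen/Khintchine bound $\mathbb{E}_\varepsilon\int_\ZT\Phi(|F_\varepsilon|)\le N\int_\ZT\Phi(|f'|)$ resting exactly on concavity of $\Phi(\sqrt{x})$ (plus $\Phi(0)=0$ for subadditivity), the one-sign largeness estimate, the selection of a single $\varepsilon^0$ and the rescaling by convexity are all sound, up to harmless constant bookkeeping in the covering step (random translates give $N\le 2/\delta$ with coverage $\ge 1-e^{-1}$, which serves the same purpose). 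One point of wording: the constant $c$ you end up with depends on the family $\{T_j\}$ through the Banach-principle function $C(\mu)$, so it is ``a constant'' as the statement asserts, but it cannot be an absolute one.

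The genuine gap is in Step~1. The lemma assumes only that for each $f$ the maximal function $Mf$ is finite on \emph{some set of positive measure}, possibly depending on $f$; you invoke Banach's continuity principle ``granting $Mf<\infty$ a.e.\ for every $f$'', which is a strictly stronger hypothesis, and the plain Baire-category argument does not produce $C(\mu)$ from the positive-measure hypothesis. This is precisely where Stein's theorem goes beyond the ordinary Banach principle, and it is where translation invariance and positivity must be used a second time. The repair is the same device the paper employs in Lemma~\ref{L8}: since each $T_j$ is convolution with a positive measure, $|T_jf|\le T_j|f|$, so one may assume $f\ge 0$; if some $f_0\ge 0$ had $Mf_0=\infty$ on a set $B$ of positive measure, choose by Borel--Cantelli a sequence of translates $x_k$ so that almost every $x$ lies in infinitely many of the translated copies of $B$, and put $g=\sum_k 2^{-k} f_0(\cdot+x_k)$; then $g\in L^\Phi$, while positivity gives $Mg\ge 2^{-k}M\big(f_0(\cdot+x_k)\big)=\infty$ a.e., contradicting the hypothesis applied to $g$. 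With this preliminary reduction your Step~1, and hence the whole argument, goes through; without it, what you have proved is the weaker statement in which a.e.\ finiteness is assumed for every $f$.
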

\begin{proof}[Proof of Theorem~\ref{T2}]
We suppose $B\subset E$ is an arbitrary finite set. If $l$ is a
multiple for the members of $B$ then $l/B\subset l/E$, and so by
\e{MBl} we obtain
\begin{equation*}
\ZM_{l/B}f(x)\le \ZM_{l/E}f(x).
\end{equation*}
 Hence, according to \e{lem6} we have
\begin{equation*}
|\{x\in \ZT:\, \ZM_{l/B}f(x)>\lambda\}|<
\frac{c}{\lambda}\left(1+\int_{\ZT}\Phi(f(t))dt\right),
\end{equation*}
for any finite $B\subset E$ and $f\in L^\Phi$. Combining this with
the corollary after \trm{T4} we obtain
\begin{equation}\label{MEd} |\{x\in \ZT:\zR_Ef(x)>\lambda
\}|\le
\frac{c}{\lambda}\left(1+\int_{\ZT^\infty}\Phi(f(t))dt\right),
\end{equation}
where $c>0$ is an absolute constant. We have each $B_mf(x)$ is a
convolution operator with the kernel
\begin{equation*}
\mu_m=\frac{1}{m}\sum_{i=1}^m\delta_{i/m}
\end{equation*}
where $\delta_a$ is the unit measure (Dirac function) concentrated
at $a$. It is easy to check as well $\Phi $ satisfies the
hypothesis of Stein's lemma. Therefore applying the Stein's
principle from \e{MEd} we get \e{T2form}.
  The proof is thus complete.
\end{proof}
Suppose $f(x)\in L^1(\ZT)$, $D$ is a finite set of naturals and
$l$ is a common multiple for the members of $D$. Consider the
conditional expectation $E^{\zI_l}f(x)$ of the function $f(x)$
with respect the algebra $\zI_l$ defined. For any convex function
$\phi:\ZR^+\to\ZR^+$ we have
\begin{equation}\label{cep}
\|E^{\zI_l}f(x)\|_\phi\le \|f\|_\phi.
\end{equation}
 To deduce \textit{everywhere} divergence in \trm{T1} and
\trm{T3} we use the following general lemma.
\begin{lemma}\label{L8}
Let $D$ be a set of indexes and $\phi :\ZR^+\to\ZR^+$ is a convex
increasing function. If there exists a function $f\in L^\phi $
such that $\zR_Df(x)=\infty$ on a set of positive measure, then it
can be found a function $\tilde f\in L^\phi $ with $\zR_D \tilde
f(x)=\infty $ everywhere.
\end{lemma}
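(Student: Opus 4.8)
The plan is to pass from one function with $\zR_Df=\infty$ on a set of positive measure to one with $\zR_Df=\infty$ everywhere, by a translation-averaging argument exploiting the near translation-invariance of the operators $R_n$ on $\ZT$. First I would note that the set $G=\{x\in\ZT:\zR_Df(x)=\infty\}$ satisfies $|G|=\delta>0$, and that each $R_n$ commutes with the rotations $x\mapsto x+a$ in the sense that $R_n(f(\cdot+a))(x)=(R_nf)(x+a)$; hence $\zR_D(f(\cdot+a))(x)=(\zR_Df)(x+a)$, so the translate $f_a:=f(\cdot+a)$ has $\zR_Df_a=\infty$ precisely on $G-a$. Since $|G|=\delta$, finitely many translates $a_1,\dots,a_N$ can be chosen so that $\bigcup_{j=1}^N(G-a_j)=\ZT$; for instance take any $a_j$ forming a $\delta/2$-net, so that every point of $\ZT$ lies in some $G-a_j$.

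The candidate function is then $\tilde f=\sum_{j=1}^N |f_{a_j}|=\sum_{j=1}^N|f(\cdot+a_j)|$. By subadditivity and positivity of the averages defining $\zR_D$ (the operators $R_n$ act on $|f|$, and $\zR_D$ is a sup of averages of $|\cdot|$), we have for every $x$ and every admissible index the bound $R_n\tilde f(x)\ge R_n|f_{a_j}|(x)$ for each single $j$, hence $\zR_D\tilde f(x)\ge \zR_D|f_{a_j}|(x)=(\zR_D|f|)(x+a_j)$. Choosing, for the given $x$, an index $j$ with $x\in G-a_j$, i.e. $x+a_j\in G$, gives $\zR_D\tilde f(x)=\infty$. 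As $x$ was arbitrary, $\zR_D\tilde f\equiv\infty$ on $\ZT$.

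It remains to check $\tilde f\in L^\phi$. Since $\phi$ is convex and increasing and each $f_{a_j}$ is just a rotation of $f$, we have $\int_\ZT\phi(|f_{a_j}|/c)=\int_\ZT\phi(|f|/c)$ for every $c>0$; by convexity $\phi\!\left(\tfrac{1}{N}\sum_j |f_{a_j}|\right)\le\tfrac1N\sum_j\phi(|f_{a_j}|)$, so $\int_\ZT\phi\!\left(\tilde f/(Nc)\right)\le\tfrac1N\sum_j\int_\ZT\phi(|f|/c)=\int_\ZT\phi(|f|/c)<\infty$ for $c$ large enough, and hence $\tilde f\in L^\phi$ (with $\|\tilde f\|_\phi\le N\|f\|_\phi$). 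The only genuine point requiring care is the exact commutation $\zR_D(f(\cdot+a))(x)=(\zR_Df)(x+a)$: this is immediate from \e{Rn} on the line before quotienting, and it is the sole place where the torus structure (as opposed to the abstract measure-space setting of \trm{T4}) is used; everything else is soft. I do not anticipate a serious obstacle — the argument is the standard "spread a positive-measure divergence set over the whole space by translation" trick, and the convexity of $\phi$ handles the norm bookkeeping.
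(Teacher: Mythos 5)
There is a genuine gap, and it is exactly at the crux of the lemma. Your argument rests on the claim that, because $|G|=\delta>0$, finitely many translates $G-a_1,\dots,G-a_N$ cover \emph{all} of $\ZT$ (your ``$\delta/2$-net'' step). This is false for a general measurable set of positive measure: take $G$ to be a fat Cantor set (closed, nowhere dense, of positive measure). Then any finite --- indeed any countable --- union of translates of $G$ is meager, hence by the Baire category theorem cannot equal $\ZT$. The $\delta/2$-net reasoning implicitly assumes $G$ contains an interval of length comparable to $\delta$, which positive measure does not give. Translation can only spread the divergence set over \emph{almost} every point (this is what Borel--Cantelli yields: a sequence $x_k$ with $\sum_k\ZI_{G+x_k}(x)=\infty$ a.e., and then $\sum_k 2^{-k}f(x+x_k)\in L^\phi$ diverges a.e.), but the lemma asserts divergence \emph{everywhere}, and no purely translational argument can close that last measure-zero gap. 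Your commutation identity $\zR_D(f(\cdot+a))(x)=(\zR_Df)(x+a)$ and the convexity bookkeeping for the $L^\phi$ norm are fine; the failure is the covering claim.

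The paper's proof shows what extra idea is needed after the translation/Borel--Cantelli stage. Having a.e. divergence of $\tilde f$, one picks finite sets $D_n\uparrow D$ and integers $l_n$ with $|\{\zR_{D_n}^{l_n}\tilde f>n^3\}|>1-1/\phi(n^3)$, and then \emph{corrects on the exceptional set}: since $A_n^c=\{\zR_{D_n}^{l_n}\tilde f\le n^3\}$ is $\zI_{l_n}$-measurable, one has $\zR_{D_n}^{l_n}\ZI_{A_n^c}=1$ on $A_n^c$, so $f_n=\tilde f+n^3\ZI_{A_n^c}$ satisfies $\zR_{D_n}^{l_n}f_n>n^3$ at \emph{every} point, while $\|n^3\ZI_{A_n^c}\|_\phi\le1$ because $|A_n^c|\le1/\phi(n^3)$. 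Summing $g=\sum_n n^{-2}E^{\zI_{l_n}}f_n$ (conditional expectations keep $\|g\|_\phi<\infty$ by \e{cep}) and using \e{DDl} gives $\zR_Dg>n$ everywhere for each $n$. This second, structural step --- exploiting the discreteness of $\zR_{D_n}^{l_n}$ and the measurability of the exceptional set with respect to $\zI_{l_n}$ --- is what your proposal is missing and cannot be replaced by choosing more translates.
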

\begin{proof} Suppose for some $f\ge 0$ we have
\begin{equation*}
\zR_Df(x)=\infty,\quad x\in E,
\end{equation*}
and $|E|>0$. According to Borel-Cantelli lemma (see. \cite{Ste2},
p. 442 or \cite{Zyg2}, section XIII, 1.24) there exists a sequence
$x_k\in \ZT$ such that $ \sum_k\ZI_{E+x_k}(x)=\infty $ a.e..
Denoting $\tilde f(x)=\sum_k 2^{-k}f(x+x_k)$, we get $\tilde f\in
L^\phi$ and
\begin{equation*}
\zR_D\tilde f(x)=\infty\hbox { a.e. }.
\end{equation*}
Hence by \e{RDMD} there exist a sequence of finite sets
$D_1\subset D_2\subset\ldots $ with $\cup_n D_n=D$ and a integers
$l_n$ divided by the members of $D_n$ such that
\begin{equation*}
|\{x\in \ZT:\,\zR_{D_n}^{l_n}\tilde
f(x)>n^3\}|>1-\frac{1}{\phi(n^3)}.
\end{equation*}
Since $\zR_{D_n}^{l_n}\tilde f(x)$ is $\zI_{l_n}$-measurable, so
the set
\begin{equation*}
A_n=\{x\in \ZT:\,\zR_{D_n}^{l_n}\tilde f(x)>n^3\}
\end{equation*}
is. Hence we get
\begin{align*}
&|A_n^c|\le1/\phi(n^3),\\
&\zR_{D_n}^{l_n}\ZI_{A_n^c}(x)=1,\quad x\in A_n^c.
\end{align*}
Thus, denoting
\begin{equation*}
f_n(x)= \tilde f(x)+n^3\cdot\ZI_{A_n^c}(x),
\end{equation*}
we have
\begin{align*}
&\|f_n\|_\phi\le\|f_n\|_\phi+\|n^3\cdot\ZI_{A_n^c}\|_\phi=\|f_n\|_\phi+1=\|\tilde
f\|_\phi+1,\\
& \zR_{D_n}^{l_n}f_n(x)>n^3,\hbox { for all }
x\in \ZT^\infty.
\end{align*}
Now denote
\begin{equation*}
g(x)=\sum_{n=1}^\infty\frac{1}{n^2}\cdot E^{\zI_{l_n}} f_n(x).
\end{equation*}
According to \e{cep} we have
\begin{equation*}
\|g\|_\phi\le \sum_{n=1}^\infty\frac{1}{n^2}\cdot \|E^{\zI_{l_n}}
f_n\|_\phi\le\sum_{n=1}^\infty\frac{1}{n^2}\cdot \|
f_n\|_\phi<\infty,
\end{equation*}
and using \e{DDl} we get
\begin{equation*}
\zR_Dg(x)\ge \zR_{D_n} g(x)\ge\frac{1}{n^2}\zR_{D_n}E^{\zI_{l_n}}
f_n(x) =\frac{1}{n^2}\zR_{D_n}^{l_n} f_n(x)>n,\quad x\in
\ZT^\infty,
\end{equation*}
for any $n\in \ZN$, i.e. $\zR_D g(x)=\infty $ everywhere on $\ZT$.
The proof is complete.
\end{proof}
\begin{proof}[Proof of Theorem~\ref{T3}]
We consider the rectangles
\begin{equation*}
B_i^k=\left\{x\in \ZT^\infty :\, \frac{i}{p_k}\le x_k<
\frac{i+1}{p_k}\right\},\quad i=0,1,\ldots ,p_k-1.
\end{equation*}
Since $\sp(B_i^k)=\{k\}$ we have $B_i^k\in \ZF_{2d}$ if $1\le k\le
2d$. Denote
\begin{align}
&G_k=\bigcup_{0\le i<\left[\frac{p_k}{d}\right]}B_i^k,\quad k=1,2,\ldots ,2d,\label{Gk}\\
&G=\bigcup_{k=d+1}^{2d}G_k,\quad C=\bigcap_{k=d+1}^{2d}G_k.\label{G}\\
\end{align}
It is clear $p_{d+1}>2d$. Since the number of $B_i^k$ in the union
\e{Gk} is $\left[\frac{p_k}{d}\right]$ and $|B_i^k|=1/p_k$ we
conclude
\begin{equation}\label{1d2d}
\frac{1}{d}\ge
|G_k|=\left[\frac{p_k}{d}\right]\frac{1}{p_k}>\frac{1}{d}\left(1-
\frac{d}{p_k}\right)>\frac{1}{2d},\hbox { if } k>d.
\end{equation}
Because of independence of the sets $G_k$ we get
\begin{align}
&|G|=|\bigcap
_{k=d+1}^{2d}G_k|=1-\prod_{k=d+1}^{2d}(1-|G_k|)>1-(1-(2d)^{-1})^d>
1-\frac{1}{\sqrt{e}}>\frac{1}{3},\label{|G|}\\
&|C|=\prod_{k=d+1}^{2d}|G_k|\le d^{-d}.\label{|C|}
\end{align}
Choose an arbitrary $x\in G$. We have $x\in G_k$ for some $k$ and
therefore $x\in B_i^k$ for some $0\le
i<\left[\frac{p_k}{d}\right]$ and $d<k\le 2d$. On the other hand,
using \e{1d2d} and the independence of the sets $G_j$, $d<j\le
2d$, $j\neq k$, with $B_i^k$, we obtain
\begin{equation*}
|C\bigcap B_i^k|= \left|\left(\bigcap_{d<j\le 2d,\,j\neq
k}G_k\right)\bigcap B_i^k\right|=|B_i^k|\prod_{d<j\le 2d,\,j\neq
k}|G_k|>\frac{|B_i^k|}{(2d)^{d-1}}.
\end{equation*}
From this we get
\begin{equation*}
\frac{1}{|B_i^k|}\int_{B_i^k}\ZI_C(x)dx>(2d)^{1-d}.
\end{equation*}
So we conclude
\begin{equation}\label{Ml2d}
\ZM_{l_{2d}/E}\ZI_C(x)>(2d)^{1-d},\quad x\in G,
\end{equation}
where $l_{2d}$ is defined in \e{lE}. Taking into account \e{phis}
and \e{|C|}, we have
\begin{equation*}
\int_{\ZT^\infty}\phi((2d)^{d-1}\ZI_C(x))dx=\phi((2d)^{d-1})|C|<d^{-d}\phi((2d)^{d-1})
\stackrel{d\to\infty}{\rightarrow} 0,
\end{equation*}
Thus, we may find a sequence $c_d\to \infty $ such that the
function
\begin{equation}\label{gdC}
g_d(x)=c_d(2d)^{d-1}\ZI_C(x).
\end{equation}
satisfies
\begin{equation*}
\int_{\ZT^\infty}\phi(g_d(x))dx\le 1.
\end{equation*}
From \e{gdC} we get
\begin{equation*}
\ZM_{l_{2d}/E}g_d(x)=c_d(2d)^{d-1}\ZM_{l_{2d}/E}\ZI_C(x)
\end{equation*}
and so, using \e{|G|} and \e{Ml2d}, we obtain
\begin{equation*}
|\{x\in \ZT^\infty:\,\ZM_{l_{2d}/E}g_d(x)>c_d\}|=|\{x\in
\ZT^\infty:\,\ZM_{l_{2d}/E}\ZI_C(x)>(2d)^{1-d}\}|\ge
|G|>\frac{1}{3}.
\end{equation*}
Applying \e{cor} we may find sequence of functions $f_d$ on $\ZT$
with
\begin{equation*}
\|f_d\|_\Phi=\|g_d\|_\Phi\le \int_{\ZT^\infty}\phi(g_d(x))dx\le 1
\end{equation*}
such that
\begin{equation*}
|\{x\in \ZT:\,\zR_Ef_d(x)>c_d\}|>\frac{1}{3}.
\end{equation*}
Hence, according to Stein's principle there exists a function
$f\in L^\Phi(\ZT)$ such $\zR_Ef(x)=\infty$ a.e.. To get everywhere
divergence it remains to use \lem{L8}. \trm{T3} is proved.
\end{proof}
The proof of \trm{T1} is based on some results in the Theory of
Differentiation of Integrals in $\ZR^n$. According to well known
Jessen-Marcinkiewicz-Zygmund theorem (see \cite{JMZ} or \cite{Guz}
chapter 2)
\begin{equation}\label{JMZ}
\lim_{\diam{R}\to 0,\,x\in R}\frac{1}{|R|}\int_Rf(t)dt=f(x),\hbox
{ a.e }
\end{equation}
for any $f\in L\log^{n-1}L(\ZR^n)$, where $R$ are rectangles with
sides parallel to the axis. On the other hand S.~Saks in
\cite{Saks} has proved that in this theorem the Orlicz class
$L\log^{d-1}L$ is the optimal. Certainly the relation \e{JMZ} is
true also if we consider the rectangles \e{Rl} with fixed $d$
instead of all rectangles in $\ZR^n$. As for the divergence
theorem the proof is not immediate. However there is a
generalization of Saks theorem due A.~Stokolos \cite{Sto}(see also
\cite{HarSto}). According to this theorem if $\phi$ satisfies
\e{phis} then there exists a function $f\in L^\phi(\ZR^n)$ such
that
\begin{equation}\label{Sto}
\lim_{\diam{R}\to 0,\,x\in R}\frac{1}{|R|}\int_R f(t)dt=\infty ,
\end{equation}
for any $x\in \ZR^n$, where $R$ are the rectangles of the form
\e{Rl} with fixed $d$. Moreover, it can be taken any integers
greater than or equal $2$ instead of primes $p_1,p_2,\ldots ,p_d$.
We note that all this theorems can be stated also on $\ZT^\infty
$.
\begin{proof}[Proof of Theorem \ref{T1}]
Suppose $D$ is the set of all integers of the form
\begin{equation*}
p_1^{m_1}p_2^{m_2}\ldots p_d^{m_d},\quad m_k \in\ZZ^+,\,
k=1,2,\ldots d.
\end{equation*}
Consider a sequence of subsets $D_n\subset D$ defined
\begin{equation*}
D_n=\{m=p_1^{m_1}p_2^{m_2}\ldots p_d^{m_d}:\,0\le m_k \le n,\,
k=1,2,\ldots d\},
\end{equation*}
and denote
\begin{equation*}
l_n=p_1^np_2^n\ldots p_d^n.
\end{equation*}
We have $\cup_nD_n=D$ and $l_n/D_n=D_n$. Therefore if the function
$f\in L^\phi (\ZT^\infty)$ satisfies the condition \e{Sto} then
\begin{equation*}
\lim_{k\to\infty}\ZM_{l_n/D_n}f(x)=\infty,\hbox { a.e on }
\ZT^\infty.
\end{equation*}
Applying \e{cor}, we get $\zR_Dg_n(x)\to\infty $ a.e. for a
sequence of functions $g_n$ with $\|g_n\|_\Phi\le 1$. Using
Stein's principle, we will get a function $g$ with
$\zR_Dg(x)=\infty $ a.e., and the existence of a function with
\textit{everywhere} divergence Riemann sums follows from \lem{L8}.
\end{proof}
\end{section}
\begin{section}{On Rudin's theorem and sweeping out properties}
In this section we establish equivalency between strong sweeping
out and $\delta$-sweeping out properties of operator sequences,
which seems to be interesting in view of the papers
\cite{Akc},\cite{Akc2}. Then we will deduce Rudin's theorem in
general settings from \trm{T4}.

Let $(X,m)$ be a probability space. We consider linear operators
\begin{equation}\label{oper}
T:L^1(X,m)\to \{\hbox {measurable functions on X} \}.
\end{equation}
\begin{definition}
 A sequence of linear operators $T_n $ is said to be strong
sweeping out if given $\varepsilon >0$ there is a set $E$ with
$mE<\varepsilon $ such that $\limsup_{n\to\infty} T_n\ZI_E(x)=1$
a.e. and $\liminf_{n\to\infty} T_n\ZI_E(x)=0$ a.e..
\end{definition}
\begin{definition}
Let $0<\delta\le 1$. A sequence of linear operators $T_n $ is said
to be $\delta $-sweeping out if given $\varepsilon >0$ there is a
set $E\subset X$ with $mE<\varepsilon $ such that
$\limsup_{n\to\infty} T_n\ZI_E(x)\ge\delta $ a.e..
\end{definition}
\begin{definition}
Let $0<\delta\le 1$. A sequence of linear operators $T_n $ is said
to be weak $\delta $-sweeping out if given $r >0$ there is a set
$E$ such that
\begin{equation*}
m\{x\in X:\, \sup_{n\in \ZN} T_n\ZI_E(x)\ge\delta \}>r\cdot mE.
\end{equation*}
\end{definition}
It turns out that these definitions are equivalent for the
sequences of linear operators having
 the following settings
\begin{enumerate}
    \item if $f\ge 0$ then $Tf\ge 0$,
    \item $T(\ZI_X)=1$,
    \item for any $\varepsilon >0$ there exists $\delta >0$
    such that if $E\subset X$ and
$m(E)<\delta$ then
\begin{equation*}
m\{x\in X; T\ZI_E(x)>\varepsilon \}<\varepsilon .
\end{equation*}
\end{enumerate}
\begin{theorem}\label{T6}
If the sequence of linear operators $T_n$ satisfying (1)-(3) is
$\delta $-sweeping out for any $0< \delta < 1$ then it is strong
sweeping out.
\end{theorem}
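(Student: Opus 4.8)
The plan is to show that being $\delta$-sweeping out for every $\delta<1$ forces the stronger conclusion by a sum-over-$\delta$ argument. Fix $\varepsilon>0$. Choose a rapidly increasing sequence $\delta_j\uparrow 1$ (for instance $\delta_j=1-2^{-j}$) and, using property (3), pick small thresholds $\varepsilon_j\downarrow 0$ with $\sum_j\varepsilon_j<\varepsilon/2$ and with the property that $m(F)<\varepsilon_j$ implies $m\{x:T_n\ZI_F(x)>\varepsilon_j\}<\varepsilon_j$ uniformly in $n$. For each $j$ apply the $\delta_j$-sweeping out hypothesis with the small parameter $\varepsilon_j$ to obtain a set $E_j$ with $mE_j<\varepsilon_j$ such that $\limsup_{n\to\infty}T_n\ZI_{E_j}(x)\ge\delta_j$ a.e. Set $E=\bigcup_j E_j$, so that $mE\le\sum_j\varepsilon_j<\varepsilon/2<\varepsilon$. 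Since $E\supseteq E_j$ and $T_n$ is positive and linear (properties (1)--(2)), $T_n\ZI_E\ge T_n\ZI_{E_j}$, hence $\limsup_n T_n\ZI_E(x)\ge\delta_j$ a.e.\ for every $j$, and letting $j\to\infty$ gives $\limsup_n T_n\ZI_E(x)=1$ a.e. This takes care of the $\limsup$ half.

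For the $\liminf$ half the idea is that, because $mE$ is very small, the complement $E^c$ has measure close to $1$, and property (3) applied to $E$ (whose measure can be made $<\delta$ for the $\varepsilon$ in (3)) shows that $T_n\ZI_E(x)$ is small off a small exceptional set; but we need this to hold with a $\liminf$ along $n$, i.e.\ for infinitely many $n$ simultaneously, which (3) alone does not deliver. The natural fix is to be more careful in the construction: rather than using (3) only as an a priori bound, one arranges that the sets $E_j$ are chosen with a genuine \emph{sweeping out} property — i.e.\ one applies the $\delta$-sweeping out hypothesis itself, which by definition (or by an easy self-improvement using the subadditivity in property (3)) can be upgraded so that in addition $\liminf_n T_n\ZI_{E_j}(x)=0$ a.e. Then for $E=\bigcup E_j$ one still has $T_n\ZI_E\le \sum_j T_n\ZI_{E_j}$, so along a subsequence where all finitely many "large" indices vanish one controls $T_n\ZI_E$; combined with the tail estimate $\sum_{j>J}\varepsilon_j\to 0$ from property (3), a diagonal argument produces a single sequence $n_i$ with $T_{n_i}\ZI_E(x)\to 0$ a.e., giving $\liminf_n T_n\ZI_E(x)=0$ a.e.

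I would organize the write-up as: (i) a preliminary lemma that $\delta$-sweeping out is equivalent to weak $\delta$-sweeping out under (1)--(3), using a Borel--Cantelli / randomized-translation trick as in the proof of \lem{L8} to boost a single weakly sweeping set into one that sweeps out a.e.; (ii) the $\limsup$ construction above; (iii) the $\liminf$ refinement. The main obstacle is step (iii): coordinating the $\liminf$ conditions for all the $E_j$ into a $\liminf$ condition for their union along one common sequence $n_i$. The key technical device is that property (3) gives a quantitative, $n$-uniform decay $m\{x:T_n\ZI_{E_j}(x)>\varepsilon_j\}<\varepsilon_j$, so that $\bigcup_{j>J}\{x:\sup_n T_n\ZI_{E_j}(x)>\varepsilon_j\}$ has measure $\le\sum_{j>J}\varepsilon_j$, which is summable; Borel--Cantelli then kills the tail a.e., and only the finitely many $E_1,\dots,E_J$ remain, for which one intersects the (a.e.) sets $\{\liminf_n T_n\ZI_{E_j}=0\}$ and extracts the common subsequence. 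A final remark should note that the choice $\delta_j=1-2^{-j}$, $\varepsilon_j=2^{-j}\varepsilon$ makes all the series converge and all the exceptional sets summable, so everything is effective.
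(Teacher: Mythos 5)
Your $\limsup$ half is fine: taking $E=\bigcup_j E_j$ with $mE_j<\varepsilon_j$ and $\limsup_n T_n\ZI_{E_j}\ge\delta_j$ a.e., positivity gives $T_n\ZI_E\ge T_n\ZI_{E_j}$ and (1)--(2) give $T_n\ZI_E\le T_n\ZI_X=1$, so $\limsup_n T_n\ZI_E=1$ a.e. But the $\liminf$ half has a genuine gap, and the two devices you propose to close it do not work. First, the ``upgrade'' you invoke --- that the $\delta$-sweeping out hypothesis can be strengthened so that each $E_j$ also satisfies $\liminf_n T_n\ZI_{E_j}=0$ a.e. --- is not part of the definition and is not proved by ``subadditivity in (3)''; it is essentially the statement being proved. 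Second, and more decisively, your tail estimate is false: property (3) bounds $m\{x: T_n\ZI_{E_j}(x)>\varepsilon_j\}$ for each \emph{fixed} $n$, not for $\sup_n T_n$. Indeed, since $\limsup_n T_n\ZI_{E_j}\ge\delta_j$ a.e., the set $\{x:\sup_n T_n\ZI_{E_j}(x)>\varepsilon_j\}$ has measure $1$ (once $\varepsilon_j<\delta_j$), so $\bigcup_{j>J}\{\sup_n T_n\ZI_{E_j}>\varepsilon_j\}$ has full measure and Borel--Cantelli cannot kill the tail; no common subsequence $n_i$ is produced this way. (Also, your preliminary step (i), equating weak $\delta$-sweeping out with $\delta$-sweeping out via translations, needs translation invariance --- that is the separate Theorem~\ref{T7} --- and is neither available nor needed in the abstract setting of Theorem~\ref{T6}.)

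The missing ideas are an \emph{interleaved} choice of sets and operator blocks, and a \emph{complementation} trick based on (1)--(2). In the paper's proof one chooses, alternately, $E_1,n_1,E_2,n_2,\ldots$ so that on the finite block $n_{k-1}\le m\le n_k$ one has $T_m\ZI_{E_k}>1-2^{-k}$ off a set of measure $2^{-k}$, and then the \emph{later} sets $E_{k+1},E_{k+2},\ldots$ are taken so small that, by (3) applied only to the finitely many operators with index $\le n_k$, $\sup_{n_{k-1}\le m\le n_k}T_m\big(\sum_{j>k}\ZI_{E_j}\big)\le 2^{-k}$ off a set of measure $2^{-k}$; this is the legitimate substitute for your (false) $\sup_n$ tail bound, and it is only possible because the blocks are fixed before the tail sets are chosen. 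Finally, $E$ is built from the \emph{odd}-indexed sets only (trimmed so as to be disjoint from the even ones): on odd blocks the above gives $T_m\ZI_E>1-2^{-2k}$ on a large set, while on even blocks one uses $T_m\ZI_E\le T_m\ZI_{\tilde E_{2k}^c}=1-T_m\ZI_{\tilde E_{2k}}$, which is small precisely because $T_m\ZI_{E_{2k}}$ is nearly $1$ there. In other words, the $\liminf=0$ is obtained not by making $T_n\ZI_E$ small via (3), but by making $T_n$ of a set disjoint from $E$ nearly $1$ and subtracting from $T_n\ZI_X=1$. Without this mechanism your construction only yields $\limsup_n T_n\ZI_E=1$, which is not strong sweeping out.
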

\begin{proof} Assume $\{T_n\}$ satisfies the hypothesis of the theorem.
Using a standard argument, one can easily choose a sequence of
integers $1=n_0<n_1<n_2<\ldots $ and measurable sets $E_k\subset
X$ such that
\begin{align}
& mE_k<\varepsilon 2^{-k},\label{Ek}\\
& m\{x\in X; \sup_{n_{k-1}\le m\le
n_k}T_m\ZI_{E_k}(x)>1-2^{-k} \}>1-2^{-k},\label{Bk}\\
& m\{x\in X; \sup_{n_{k-1}\le m\le
n_k}T_m\left(\sum_{j=k+1}^\infty\ZI_{E_j}(x)\right)>2^{-k}
\}<2^{-k}.\label{Ak}
\end{align}
The selection of $n_k$ and $E_k$ is realized in this order:
$E_1,n_1,E_2,n_2,\ldots $. To avoid big expressions we use the
notation $U_k=\sup_{n_{k-1}\le m\le n_k}T_m$. Denote
\begin{align*}
&\tilde E_k=E_k\setminus \cup_{j=k+1}^\infty E_j,\quad E=\cup_{j=0}^\infty \tilde E_{2j+1},\\
&A_k=\left\{x\in X:\, U_k\left(\sum_{j=k+1}^\infty\ZI_{E_j}(x)\right)\le2^{-k}
\right\},\\
&B_k=\left\{x\in X:\, U_k\ZI_{E_k}(x)>1-2^{-k} \right\},\\
&G=(\liminf_{k\to\infty} A_k)\cap (\liminf_{k\to\infty} B_k).
\end{align*}
From \e{Ak} and \e{Bk} we get $mG=1$. Given an arbitrary $x\in G$
we have
\begin{equation*}
x\in A_k\cap B_k,\quad k> k_0,
\end{equation*}
and consequently
\begin{equation}\label{Uk}
U_k\left(\sum_{j=k+1}^\infty\ZI_{E_j}(x)\right)\le2^{-k},\quad
U_k\ZI_{E_k}(x)>1-2^{-k},\quad k> k_0.
\end{equation}
Thus
\begin{multline*}
U_{2k+1}\ZI_E(x)\ge U_{2k+1}\ZI_{\tilde
E_{2k+1}}(x)\\
\ge U_{2k+1}\ZI_{E_{2k+1}}(x)- U_{2k+1}\left(\sum_{j=2k+2}^\infty
\ZI_{E_j}(x)\right)
>1-2^{-(2k+1)}-2^{-(2k+1)}=1-2^{-2k}.
\end{multline*}
This implies
\begin{equation*}
\limsup_{m\to\infty} T_m\ZI_E(x)=1,\quad x\in E.
\end{equation*}
It is easy to observe $E\cap E_{2k}=\varnothing$. So we have
$E\subset \tilde E_{2k}^c$ and from \e{Uk} we derive
\begin{multline*}
U_{2k}\ZI_E(x)\le U_{2k}\ZI_{\tilde E_{2k}^c}(x) =
1-U_{2k}\ZI_{\tilde E_{2k}}(x)
\\
\le 1-U_{2k}\ZI_{E_{2k}}(x)+U_{2k}\left(\sum_{j=2k+1}^\infty
\ZI_{E_j}(x)\right)<1-(1-2^{-{2k}})+2^{-{2k}} =2^{-2k+1}.
\end{multline*}
Hence
\begin{equation*}
\liminf_{m\to\infty} T_m\ZI_E(x)=0,\quad x\in E,
\end{equation*}
and the proof is complete.
\end{proof}
Now suppose $(X,m)$ in \e{oper} coincides with $(\ZT,\lambda )$.
In the next theorem we consider translation invariant operators
$T_n$ defined
\begin{equation*}
T_nf_x(t)=T_nf(x+t),
\end{equation*}
where $f_x(t)=f(x+t)$.
\begin{theorem}\label{T7}
If the sequence of translation invariant operators $\{T_n\}$ with
(1)-(3) is weak $\delta $-sweeping out for any $0< \delta < 1$
then it is strong sweeping out.
\end{theorem}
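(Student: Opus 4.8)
The plan is to reduce \trm{T7} to \trm{T6}. Since translation invariant operators satisfying (1)--(3) are in particular linear operators satisfying (1)--(3), it is enough to show that, under the present hypotheses, weak $\delta$-sweeping out for all $\delta<1$ implies the genuine ($\limsup$) $\delta$-sweeping out property for all $\delta<1$; then \trm{T6} finishes the argument.

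The first step is an amplification by translates. Fix $\delta_0<1$ and suppose $\{T_n\}$ is weak $\delta_0$-sweeping out. Given $r>0$, choose $E$ with $m(G)>r\cdot m(E)$, where $G=\{x:\sup_n T_n\ZI_E(x)\ge\delta_0\}$; then $m(E)>0$ (otherwise $\ZI_E=0$ in $L^1$ and $G$ would be null), so $m(E)<1/r$. By translation invariance $T_n\ZI_{E+y}(t)=(T_n\ZI_E)(t-y)$, and combining this with monotonicity and linearity, for any shifts $x_1,\dots,x_M\in\ZT$ the set $F=\bigcup_{i=1}^M(E+x_i)$ satisfies $\sup_n T_n\ZI_F(t)\ge(\sup_n T_n\ZI_E)(t-x_i)$ for each $i$, whence $\{x:\sup_n T_n\ZI_F(x)\ge\delta_0\}\supseteq\bigcup_i(G+x_i)$, while $m(F)\le M\,m(E)$. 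A routine first-moment estimate over uniformly and independently chosen $x_i$ gives $\int_{\ZT^M} m\bigl(\ZT\setminus\bigcup_i(G+x_i)\bigr)=(1-m(G))^M$, so some choice of shifts yields $m\bigl(\bigcup_i(G+x_i)\bigr)\ge 1-(1-m(G))^M$. Taking $M=\lceil\ln(1/\eta)/(r\,m(E))\rceil$ and using $m(G)>r\,m(E)$ one obtains $m(F)<(1+\ln(1/\eta))/r$ and $m\{x:\sup_n T_n\ZI_F(x)\ge\delta_0\}>1-\eta$. Since $r$ may be taken arbitrarily large, this proves: for every $\varepsilon',\eta>0$ there is a set $F$ with $m(F)<\varepsilon'$ and $m\{x:\sup_n T_n\ZI_F(x)\ge\delta_0\}>1-\eta$.

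The second step upgrades ``$\sup_n$ on a set of measure close to $1$'' to ``$\limsup_n$ almost everywhere''. Fix $\delta_1<\delta_0$ and $\varepsilon>0$, and fix any $N_k\uparrow\infty$. Applying property (3) to each of $T_1,\dots,T_{N_k}$ and a union bound produces $\delta_k^\ast>0$ such that $m(H)<\delta_k^\ast$ forces $m\{x:\max_{n\le N_k}T_n\ZI_H(x)>\delta_1\}<2^{-k}$. Now apply Step~1 with $\eta=2^{-k}$ and target measure $\varepsilon_k'=\min(\varepsilon\,2^{-k},\delta_k^\ast)$ to get $F_k$ with $m(F_k)<\varepsilon_k'$ and $m\{x:\sup_n T_n\ZI_{F_k}(x)\ge\delta_0\}>1-2^{-k}$; on this last set, since $\delta_1<\delta_0$, there is an index $n$ with $T_n\ZI_{F_k}(x)>\delta_1$, and the points where such an index lies in $\{1,\dots,N_k\}$ form a set of measure $<2^{-k}$ by the choice of $\delta_k^\ast$, so $m\{x:\sup_{n>N_k}T_n\ZI_{F_k}(x)>\delta_1\}>1-2^{-k+1}$. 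Put $E=\bigcup_k F_k$; then $m(E)<\varepsilon$, and since $\ZI_E\ge\ZI_{F_k}$ the sets $A_k=\{x:\sup_{n>N_k}T_n\ZI_E(x)>\delta_1\}$ satisfy $m(A_k^c)<2^{-k+1}$, hence $\sum_k m(A_k^c)<\infty$. By Borel--Cantelli almost every $x$ lies in all but finitely many $A_k$, and since $N_k\to\infty$ this gives $\limsup_n T_n\ZI_E(x)\ge\delta_1$ a.e. Thus $\{T_n\}$ is $\delta_1$-sweeping out for every $\delta_1<1$, and \trm{T6} yields that it is strong sweeping out.

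I expect the main obstacle to be this second step. The weak hypothesis only controls $\sup_n T_n\ZI_E$ on a set whose measure is a priori bounded away from $1$, and says nothing about which indices realize the supremum, whereas $\delta$-sweeping out demands a $\limsup$ over $n\to\infty$ holding almost everywhere. Overcoming this requires simultaneously making the exceptional sets summable (to invoke Borel--Cantelli), using the equicontinuity-type condition (3) to excise the uniformly bounded low-index part of each supremum, and pushing the thresholds $N_k$ to infinity — all inside a single coherent construction of $E$. The translate-averaging of Step~1 is standard once one notices that translation invariance turns super-level sets into translates of one fixed set.
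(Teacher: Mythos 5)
Your proof is correct, and while it shares the paper's overall skeleton (reduce to \trm{T6} by upgrading weak $\delta$-sweeping out to genuine $\delta$-sweeping out via translation invariance and a Borel--Cantelli argument), the mechanism is genuinely different. The paper extracts from the weak hypothesis sets $E_k$, thresholds $\delta_k\nearrow 1$ and indices $n_k\to\infty$ with $\sum_k|A_k|=\infty$ and $\sum_k|E_k|<\varepsilon$, where $A_k=\{x:\sup_{n>n_k}T_n\ZI_{E_k}(x)\ge\delta_k\}$, and then invokes the Borel--Cantelli-type translation lemma (the same one used in \lem{L8}) to produce shifts $x_k$ with $|\limsup_k(A_k+x_k)|=1$; monotonicity applied to $E=\bigcup_k(E_k+x_k)$ finishes. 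You instead amplify each witness set by a finite union of random translates chosen by a first-moment computation, so that the level set of $\sup_nT_n\ZI_{F_k}$ already has measure exceeding $1-2^{-k}$, and then apply only the elementary (first) Borel--Cantelli lemma to the complements, avoiding the full-measure-limsup translation lemma altogether. A point in favour of your version: the paper's choice of $F_k$ uses the weak sweeping property with $\sup_{n>k}$ in place of the $\sup_{n\in\ZN}$ appearing in the definition, which is formally stronger and is justified only implicitly (small $|F_k|$ together with condition (3)); your Step 2 makes exactly this reduction explicit, applying (3) to $T_1,\dots,T_{N_k}$ with a union bound to discard the low indices before letting $N_k\to\infty$. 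Both arguments then conclude with \trm{T6}.
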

\begin{proof}
According to the previous theorem it is enough to proof that
$\{T_n\}$ is $\delta $-sweeping out for any $0<\delta <1$. By weak
$\delta $-sweeping property we may choose measurable sets $F_k$
such that
\begin{equation*}
\frac{|\{x\in X:\, \sup_{n>k} T_n\ZI_{F_k}(x)\ge 1-\frac{1}{k}
\}|}{|F_k|}\to\infty .
\end{equation*}
Taking subsequences of $F_k$ (with possible repetitions) allows us
to find a sequence of sets $E_k$, a sequences $\delta_k\nearrow
1$, and $n_k\to\infty$ so that, taking
\begin{equation*}
A_k=\{x\in X:\, \sup_{n>n_k} T_n\ZI_{E_k}(x)\ge \delta_k \},
\end{equation*}
we have
\begin{equation*}
\sum_{k=1}^\infty |A_k|=\infty, \quad \sum_{k=1}^\infty
|E_k|<\varepsilon.
\end{equation*}
 Applying Borel-Cantelli lemma, we can choose a sequence $x_k$ so
that
\begin{equation*}
|\limsup_{k\to\infty}(A_k+x_k)|=1.
\end{equation*}
Since $T_n$ are translation invariant operators, denoting
\begin{equation*}
E=\bigcup_{k=1}^\infty (E_k+x_k)
\end{equation*}
we get
\begin{multline*}
|\{x\in X:\, \limsup_{n\to\infty } T_n\ZI_E(x)=1 \}|\\
 \ge|\limsup_{k\to\infty}\{x\in X:\, \sup_{n>n_k}
T_n\ZI_{E_k+x_k}(x)\ge \delta_k \}|=
|\limsup_{k\to\infty}(A_k+x_k)|=1,
\end{multline*}
and
\begin{equation*}
|E|\le\sum_{j=1}^\infty|E_k+x_k|=\sum_{j=1}^\infty|E_k|<\varepsilon.
\end{equation*}
\end{proof}
Clearly Riemann sums operators satisfy the conditions (1)-(3). (1)
and (2) are clear. Let us verify (3). If for $E\subset \ZT$ we
have $|E|<\delta=\varepsilon^2 $ then
\begin{equation*}
\int_\ZT R_n\ZI_E(x)dx=|E|<\varepsilon^2
\end{equation*}
and therefore, using Chebishev's inequality, we get
\begin{equation*}
|\{x\in\ZT:\, R_n\ZI_E(x)>\varepsilon\}|<\varepsilon,
\end{equation*}
which proves (3). Analyzing Rudin's proof one can easily
understand it allows to get $\delta$-sweeping out property for any
$0<\delta<1$. Thus applying \trm{T6} we conclude that if $\{n_k\}$
satisfies the hypothesis of Rudin's theorem then $R_{n_k}$ is
strong sweeping out. We note that this assertion for Riemann sums
was proved by M. Akcoglu, A. Bellow, R. Jones, V. Losert, K.
Reinhold-Larsson and M. Wierdl in \cite{Akc} by using Rudin's
ideas. Now consider the operators
\begin{equation}\label{Khin}
\frac{1}{n}\sum_{j=1}^nf(jx).
\end{equation}
J. M. Marstrand in \cite{Mar}, solving Kinchine's conjecture, has
proved this sequence has $1$-sweeping out property. Applying
\trm{T6} we get the sequence \e{Khin} is strong sweeping out. We
note that alternate proofs of Rudin's and Marstrand's theorems
follows from Bourgain Entropy Theorem \cite{Bour1} a general tool
for investigation of divergence of certain operator sequences.
\begin{proof}[Proof of Rudin's theorem based on \trm{T4}]
Fix a number $0<\delta <1$. According to the conditions of Rudin's
theorem for any $k\in \ZN$ there exists a collection
$D_k=\{n_1,n_2,\ldots ,n_k\}\subset D$ such that no member of
$D_k$ divides the least common multiple of the others. It means we
can choose primes $p_{\nu_1},p_{\nu_2},\ldots ,p_{\nu_k}$ such
that $p_{\nu_j}|n_{\nu_j}$ and $p_{\nu_j}\not |n_{\nu_i}$ if
$i\neq j$. Let $l$ be the least common multiple of the numbers
$n_1,n_2,\ldots ,n_k$. Denoting $q_j=l/n_j$ we have
\begin{equation*}
l/D_k=\{q_1,q_2,\ldots,q_k\}.
\end{equation*}
In addition
\begin{equation*}
q_j=p_{\nu_1}^{m_1(j)}\cdot p_{\nu_2}^{m_2(j)}\ldots
p_{\nu_k}^{m_k(j)}\cdot\gamma_j
\end{equation*}
where
\begin{equation*}
m_j(j)=0,\quad m_i(j)>0,\hbox { if } i\neq j.
\end{equation*}
Denote by $Q_j$ the collection of rectangles \e{Rl} corresponding
to $l=q_j$ and suppose $Q=\cup_{j=1}^kQ_k$. According to \e{MBl}
we have
\begin{equation*}
\ZM_{l/D_k}f(x)=\sup_{B\in Q:\, x\in
B}\frac{1}{|B|}\int_B|f(t)|dt.
\end{equation*}
On the other hand any rectangle of the form
\begin{align*}
\left\{x\in\ZT^\infty:\,
\left[\frac{t_i}{p_{\nu_i}},\frac{t_i+1}{p_{\nu_i}}\right),\,
1\le j\le k,\, j\neq i\right\},\\
 0\le t_i<p_{\nu_i},\quad 1\le j\le k,\, j\neq i,
\end{align*}
can be represented as a disjoint union of rectangles from $Q_i$.
Thus the same assertion is true also for the set
\begin{equation*}
C_i=\{x\in\ZT^\infty :\, 0\le x_{\nu_j}< \frac{r_j}{p_{\nu(j)}},\,
1\le j\le k,\, j\neq i\},,\quad r_j=[\delta p_{\nu(j)}]+1.
\end{equation*}
Denote
\begin{equation*}
C=\bigcap_{j=1}^kC_j=\{x\in\ZT^\infty :\, 0\le x_{\nu_j}<
\frac{r_j}{p_{\nu(j)}},\, j=1,2,\ldots ,k\}.
\end{equation*}
It is easy to observe if $B\in Q_j$ and $B\subset C_j$ then
\begin{equation*}
|B\cap C|=\frac{r_j}{p_{\nu(j)}}|B|.
\end{equation*}
Therefore, since $\frac{r_j}{p_{\nu(j)}}>\delta $ we obtain
\begin{equation*}
\ZM_{l/D_k}\ZI_C(x)>\delta,\quad x\in \bigcup_{j=1}^kC_j
\end{equation*}
On the other hand we have
\begin{equation*}
\left|\bigcup_{j=1}^kC_j\right|=|C|
\left(1+\sum_{j=1}^k\frac{p_{\nu(j)}}{r_j}\right)>(k+1)|C|.
\end{equation*}
Thus we get
\begin{equation*}
|\{x\in\ZT^\infty:\, \ZM_{l/D_k}\ZI_C(x)>\delta\}|>(k+1)|C|
\end{equation*}
According \trm{T4} for some $G\subset \ZT $ we get
\begin{equation*}
|\{x\in\ZT :\, \zR_{D_k}^l\ZI_G(x)>\delta\}|>(k+1)|G|.
\end{equation*}
In addition, since $C$ is $\ZB_l$-measurable we have $G$ is
$\zI_l$-measurable. Thus from \e{DDl} we conclude
\begin{multline*}
|\{x\in\ZT :\, \zR_D\ZI_G(x)>\delta\}|\ge|\{x\in\ZT :\,
\zR_{D_k}\ZI_G(x)>\delta\}|\\
=|\{x\in\ZT :\,
\zR_{D_k}^l\ZI_G(x)>\delta\}|\ge(k+1)|G|,\quad k=1,2,\ldots.
\end{multline*}
This implies the sequence $R_nf(x)$, $n\in D$, has weak $\delta$
sweeping out property for any $0<\delta <1$. Applying \trm{T7} we
obtain it has strong sweeping out property. The proof is complete.
\end{proof}
\end{section}
\centerline{\footnotesize ACKNOWLEDGEMENT}

{\footnotesize This research work was kindly supported by College
of Science-Research Center Project No. Math/2008/07, Mathematics
Department, College of Science, King Saud University.}

\end{document}